\tikzstyle{line}=[draw]
\newcommand{\for}{\textrm{for}\,\,}
\newcommand{\ignore}[1]{}
\newcommand{\startClaims}{\setcounter{claim}{0}}
\newtheorem{theorem}{Theorem}[section]
\newtheorem{corollary}[theorem]{Corollary}
\newtheorem{lemma}[theorem]{Lemma}
\newtheorem{proposition}[theorem]{Proposition}
\newtheorem{problem}[theorem]{Problem}
\newtheorem*{corollary*}{Corollary}
\theoremstyle{plain}
\newcommand{\eopf}{\raisebox{0.8ex}{\framebox{}}}
\newcommand{\diag}{\operatorname{diag}}
\newcommand{\jdup}{\operatorname{jdup}}
\title{Regular Graphs of Degree at most Four that Allow Two Distinct Eigenvalues}
\author{Wayne Barrett
\thanks{Department of Mathematics, Brigham Young University, Provo, UT (wb@mathematics.byu.edu) }
\and Shaun Fallat
\thanks{Department of Mathematics and Statistics, University of Regina, Regina, Saskatchewan, CA (shaun.fallat@uregina.ca)} ~\thanks{Corresponding Author}
\and Veronika Furst \thanks{Department of Mathematics, Fort Lewis College, Durango, CO (furst\_v@fortlewis.edu)}
\and Shahla Nasserasr \thanks{School of Mathematical Sciences, Rochester Institute of Technology, Rochester, NY (sxnsma@rit.edu)}
\and Brendan Rooney \thanks{School of Mathematical Sciences, Rochester Institute of Technology, Rochester, NY (brsma@rit.edu)} 
\and Michael Tait \thanks{Department of Mathematics \& Statistics, Villanova University, Villanova, PA (michael.tait@villanova.edu)}}
\date{\today}
\numberwithin{equation}{subsection}
\begin{document}
\maketitle

\begin{abstract}

For an $n \times n$ matrix $A$, let $q(A)$ be the number of distinct eigenvalues of $A$.  If $G$ is a connected graph on $n$ vertices, let $\mathcal{S}(G)$ be the set of all real symmetric $n \times n$ matrices $A=[a_{ij}]$ such that for $i\neq j$, $a_{ij}=0$ if and only if $\{i,j\}$ is not an edge of $G$.  Let $q(G)={\rm min}\{q(A)\,:\,A \in \mathcal{S}(G)\}$.  Studying $q(G)$ has become a fundamental sub-problem of the inverse eigenvalue problem for graphs, and characterizing the case for which $q(G)=2$ has been especially difficult.  This paper considers the problem of determining the regular graphs $G$ that satisfy $q(G)=2$. The resolution is straightforward if the degree of regularity is $1, 2,$ or $3$.  However, the $4$-regular graphs with $q(G)=2$ are much more difficult to characterize.  A connected $4$-regular graph has $q(G)=2$ if and only if either $G$ belongs to a specific infinite class of graphs, or else $G$ is one of fifteen $4$-regular graphs whose number of vertices ranges from $5$ to $16$.  This technical result gives rise to several intriguing questions. \\

\noindent {\bf Keywords} inverse eigenvalue problem for graphs, orthogonality, $q$-parameter, regular graphs. \\

\noindent {\bf AMS subject classification} 05C50, 15A29, 15A18. \\

%Zoom link: https://rit.zoom.us/j/92106772230 \\

\end{abstract}

\section{Introduction}\label{Introduction}

For any connected graph $G$ on $n$ vertices, let $\mathcal{S}(G)$ denote the set of all real symmetric $n\times n$ matrices $A=[a_{ij}]$ where $a_{ij}=0$ if and only if $\{i,j\}$ is not an edge in $G$, and the entries $a_{ii}$ can take any value. The \emph{inverse eigenvalue problem} for a graph $G$ asks to determine all possible spectra of matrices in $\mathcal{S}(G)$ \cite{MR4074182, MR4478249}. %Throughout this work we consider all graphs to be connected unless stated otherwise. 

This problem and several of its sub-problems have been studied extensively. 
One of these sub-problems is to consider all possible multiplicity lists of eigenvalues of matrices in $\mathcal{S}(G)$. 
If we look at the multiplicity lists of eigenvalues of all matrices in $\mathcal{S}(G)$ as lists of numbers, the shortest length among all these lists is the minimum number of distinct eigenvalues of matrices in $\mathcal{S}(G)$. 
This parameter is denoted by $q(G)$ and has been studied in \cite{MR3118943, MR3665573, MR1899084, MR3891770, levene2020orthogonal}. In this paper, we investigate the problem of determining which regular connected graphs $G$ have a matrix in $\mathcal{S}(G)$ with exactly two distinct eigenvalues, that is, with $q(G)=2$.

The connected graphs $G$ with $q(G) = n-1$ or $n$ have been characterized, see \cite{MR3665573}.  Graphs with $q(G)=2$ are much harder to describe; for example, there is no forbidden subgraph characterization of graphs with $q(G)=2$, as implied by Theorem 5.2 in \cite{MR3118943}. It is known that $q(G)=2$ if and only if there is an orthogonal matrix in $\mathcal{S}(G)$ \cite{MR3118943}, and so studying graphs $G$ on $n$ vertices with $q(G)=2$ is equivalent to studying all possible zero patterns of $n\times n$ symmetric orthogonal matrices. %Finding all such patterns is difficult, and hence so is characterizing graphs $G$ with $q(G)=2$. 

A graph $G$ must have a sufficiently large number of edges to satisfy $q(G)=2$. In \cite{AllowsProblem}, we showed that a connected graph $G$ on $n$ vertices with $q(G)=2$ has at least  $2n - 4$ edges. We also characterized the graphs for which equality is attained. This result immediately implies that the number of $r$-regular graphs with $r\in \{2,3\}$ is finite, and in Section \ref{Regular} we characterize these graphs. When considering $4$-regular graphs with $q(G)=2$, the difficulty increases significantly. Our main theorem (Theorem \ref{Regular4}) characterizes all connected $4$-regular graphs with $q(G)=2$.

Throughout this paper, we only consider connected, simple, undirected graphs. 

%%%%%%%%%%%%%%%%%%%%%%%%%%%%%%%%%%%%%%%%%%%%%%%%%%%%%%%%%%%%%
\subsection{Preliminaries}\label{Preliminaries}
%%%%%%%%%%%%%%%%%%%%%%%%%%%%%%%%%%%%%%%%%%%%%%%%%%%%%%%%%%%%%

One of the common ways to give a lower bound on $q(G)$ is to find a unique shortest path between vertices. This technique, specialized to the case $q(G)=2$, is explained in the following lemma, which is a corollary of Theorem 3.2 in \cite{MR3118943}.

\begin{lemma}\label{shortest paths lemma}
Let $G$ be a connected graph with $q(G)=2$. If $xuy$ is a path of length $2$, then either $x\sim y$ or there is another path $xvy$ of length $2$ between $x$ and $y$.
\end{lemma}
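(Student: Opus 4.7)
The plan is to use the standard algebraic reformulation of $q(G)=2$ in terms of the minimal polynomial. By hypothesis, there is a matrix $A \in \mathcal{S}(G)$ with exactly two distinct eigenvalues $\lambda_1, \lambda_2$, which must therefore satisfy $(A-\lambda_1 I)(A-\lambda_2 I)=0$. Expanding gives the identity
\[
A^2 \;=\; (\lambda_1+\lambda_2)\,A \;-\; \lambda_1\lambda_2\,I,
\]
which is the key structural tool. The strategy is to read off an equation from the $(x,y)$-entry of this identity for distinct vertices $x$ and $y$, and show that in the case $x \not\sim y$ the path $xuy$ cannot be the unique one of length $2$ between $x$ and $y$.

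More concretely, for $x \ne y$ I would compute
\[
(A^2)_{xy} \;=\; \sum_{k} A_{xk} A_{ky} \;=\; A_{xx}A_{xy} + A_{yy}A_{xy} + \sum_{k \in N(x)\cap N(y)} A_{xk}A_{ky},
\]
since the only indices $k$ for which both $A_{xk}$ and $A_{ky}$ can be nonzero are $k \in \{x,y\}$ or $k$ a common neighbor of $x$ and $y$. Comparing with $(\lambda_1+\lambda_2)A_{xy} - \lambda_1\lambda_2 \cdot 0$ then gives
\[
\sum_{k \in N(x)\cap N(y)} A_{xk}A_{ky} \;=\; \bigl(\lambda_1+\lambda_2 - A_{xx} - A_{yy}\bigr)\,A_{xy}.
\]

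Now suppose $x \not\sim y$, so that $A_{xy}=0$ by the pattern condition defining $\mathcal{S}(G)$. The right-hand side above vanishes, leaving
\[
\sum_{k \in N(x)\cap N(y)} A_{xk}A_{ky} \;=\; 0.
\]
Since $xuy$ is a path of length $2$, the common neighbor $u$ contributes the nonzero term $A_{xu}A_{uy}$ to this sum. If $u$ were the only common neighbor, the sum would be a single nonzero term, a contradiction. Hence there is some $v \in N(x)\cap N(y)$ with $v \ne u$, producing the required second path $xvy$ of length $2$.

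The argument is almost entirely bookkeeping once the identity $A^2=(\lambda_1+\lambda_2)A-\lambda_1\lambda_2 I$ is in hand; the only mild subtlety is making sure to account for the diagonal contributions $A_{xx}A_{xy}$ and $A_{yy}A_{xy}$ in the expansion of $(A^2)_{xy}$, which however are harmless because they are absorbed into the single coefficient multiplying $A_{xy}$ and disappear altogether in the $x \not\sim y$ case. No deeper obstacle is expected.
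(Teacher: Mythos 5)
Your proof is correct. The paper itself gives no argument for this lemma, deferring instead to Theorem 3.2 of the cited reference (the general ``unique shortest path'' bound $q(G)\ge k+1$); your computation is exactly the $k=2$ specialization of that theorem's proof, using $A^2=(\lambda_1+\lambda_2)A-\lambda_1\lambda_2 I$ (valid since a real symmetric matrix is diagonalizable, so two distinct eigenvalues force this quadratic annihilating polynomial) and reading off the $(x,y)$-entry. The bookkeeping of the diagonal terms and the case split on $x\sim y$ are handled correctly, so the argument is complete and self-contained.
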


In \cite{AllowsProblem}, we used this lemma extensively in combination with a breadth-first search of a graph, and we use this strategy here as well. Given a fixed vertex $v$, we perform a breadth-first search from $v$. Denote the vertices at distance exactly $i$ from $v$ by $N_i(v)$, and call this set the $i$th {\em distance set} from $v$. We use $\epsilon(v)$ to denote the {\em eccentricity} of $v$, which is the maximum distance from $v$ to any vertex in the graph. The distance sets from $v$ partition the vertex set of $G$ as 
\[
V(G) = \bigcup_{i=0}^{\epsilon(v)} N_i(v),
\]
which we call the \emph{distance partition} of $G$ with respect to $v$. If $u\in N_i(v)$ and $w\in N_{i+1}(v)$ and $u\sim w$, we call $u$ a {\em predecessor} of $w$ and we call $w$ a {\em successor} of $u$. A {\em terminal vertex} is a vertex with no successors.

Assume that $G$ is a graph with $q(G)=2$ and consider the distance partition from a vertex $v$. If a vertex $u$ is in $N_i(v)$ for some $i\geq 2$, then by Lemma \ref{shortest paths lemma}, it must have at least two predecessors, as otherwise there would be a unique shortest path of length $2$ from $u$ to a vertex in $N_{i-2}(v)$. If $G$ is a $4$-regular graph on $n$ vertices, then there are $n-5$ vertices in 
\[
\bigcup_{i=2}^{\epsilon(v)} N_i(v) = V(G) \setminus (\{v\} \cup N_1(v)),
\]
and each of these vertices has at least two predecessors so there are at least $2(n-5)$ edges incident to these vertices. With the 4 edges incident to $v$, this accounts for $2n-6$ of the $2n$ edges of $G$. 
%. In addition to the four edges between $v$ and $N_1(v)$, \textcolor{red}{the edges between the vertices in $V(G)\setminus(\{v\}\cup N_1(v))$ and their predecessors account} \textcolor{blue}{What about the edges inside $N_i(v)?$} for $2n-6$ of the $2n$ edges of $G$. 
We call the remaining six edges {\em extra edges}, and throughout the paper we consider the possible locations of these six extra edges.

We use standard graph theory terminology and notations.  Often we abbreviate an edge $\{u,v\} \in E(G)$ as $uv$ for vertices $u,v \in V(G)$.  For two graphs $G$ and $H$ with vertex sets $V(G)$ and $V(H)$, respectively, the Cartesian product $G\Box H$ is the graph with vertex set $V(G)\times V(H)$ and $(g_1,h_1)$ adjacent to $(g_2,h_2)$ if either $g_1=g_2$ and $h_1h_2\in E(H)$, or $h_1=h_2$ and $g_1g_2\in E(G)$. The complete graph on $n$ vertices, complete bipartite graph on partite sets of sizes $m$ and $n$, the cycle on $n$ vertices, the path on $n$ vertices, and the hypercube graph (the graph on $2^n$ vertices obtained by an $n$-fold Cartesian product of $K_2$ with itself) are denoted by $K_n$, $K_{m,n}$, $C_n$, $P_n$, $Q_n$, respectively. The circulant graph $G=C(n,\pm i,\pm j)$ is the graph with vertex set $V(C(n,\pm i,\pm j))=\mathbb{Z}/n\mathbb{Z}$ that has edges $\{t,t\pm i\}$ and $\{t,t\pm j\}$ for all $t\in V(G)$. 

Let $G$ be a graph with $v\in V(G)$. A graph $\jdup(G,v)$ is constructed from $G$ by \emph{joined duplicating} a vertex $v\in V(G)$ if $V(\jdup(G,v))=V(G)\cup\{u\}$ and $E(\jdup(G,v))=E(G)\cup\{uw\,:\, w\in \{v\}\cup N_1(v)\}$. From Lemma 2.9 in \cite{MR3891770}, $q(\jdup(G,v))\leq q(G)$ for any vertex $v$ in a connected graph $G$.

\begin{lemma}\label{maxindependnet}[Lemma 2.3, \cite{MR4044603}]
Let $G$ be a connected graph on $n$ vertices with $q(G)=2$. If $S$ is an independent set of vertices, then $|S|\leq k$ where $k$ is the least integer such that there is a matrix in $\mathcal{S}(G)$ with two distinct eigenvalues of multiplicities $k$ and $n-k$. 
\end{lemma}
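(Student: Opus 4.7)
The plan is to exhibit $|S|$ pairwise orthogonal nonzero vectors in $\mathbb{R}^k$, which will force $|S| \le k$. These vectors will come from a rank factorization of a suitable shift of the minimizing matrix guaranteed by the hypothesis.

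First, I would fix $A \in \mathcal{S}(G)$ with eigenvalues $\lambda$ of multiplicity $k$ and $\mu$ of multiplicity $n-k$, where $k$ is the least integer for which such a matrix exists (so $k \le n-k$). Shifting, $A - \mu I$ has eigenvalues $\lambda - \mu \ne 0$ with multiplicity $k$ and $0$ with multiplicity $n-k$, and so is either positive or negative semidefinite of rank exactly $k$. A standard rank factorization then gives $A - \mu I = \epsilon B^\top B$ for some real $k \times n$ matrix $B$ and sign $\epsilon \in \{+1, -1\}$. Writing $b_1, \dots, b_n \in \mathbb{R}^k$ for the columns of $B$, one reads off the off-diagonal entries as $a_{ij} = \epsilon\, b_i^\top b_j$ for all $i \ne j$.

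Next I would apply the independence hypothesis: for distinct $i, j \in S$ the vertices $i$ and $j$ are not adjacent in $G$, so $a_{ij} = 0$ and consequently $b_i^\top b_j = 0$. To conclude that the vectors $\{b_i : i \in S\}$ are linearly independent it remains to rule out $b_i = 0$ for $i \in S$. But if $b_i = 0$ then $a_{ij} = 0$ for every $j \ne i$, making vertex $i$ isolated in $G$; this contradicts connectedness, provided $n \ge 2$ (which is automatic since $q(G) = 2$ rules out $n = 1$).

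Putting this together, $\{b_i : i \in S\}$ is a set of $|S|$ pairwise orthogonal nonzero vectors in $\mathbb{R}^k$, hence linearly independent, forcing $|S| \le k$. The proof is short, and the only subtle choice is shifting by the \emph{larger-multiplicity} eigenvalue $\mu$ rather than by $\lambda$: this is exactly the place where the minimality of $k$ enters, since it is the shift by $\mu$ that produces a factor $B$ with only $k$ rows and hence a target space $\mathbb{R}^k$ of the correct dimension.
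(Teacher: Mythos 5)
Your argument is correct: shifting by the eigenvalue of multiplicity $n-k$ yields a rank-$k$ semidefinite matrix whose Gram vectors $b_i\in\mathbb{R}^k$ are nonzero (by connectedness) and pairwise orthogonal on the independent set, forcing $|S|\le k$. The paper itself gives no proof, citing the lemma from \cite{MR4044603}; your rank-factorization argument is the standard one for this statement, and the only refinement worth noting is that it actually proves $|S|\le\min(m,n-m)$ for \emph{every} achievable multiplicity pair, with minimality of $k$ entering only to guarantee that the pair $(k,n-k)$ is achievable.
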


\begin{lemma}\label{OrthLemma}
Let
\[
M=\left[\begin{array}{cc}
C & B\\
B^T & D
\end{array}\right]
\]
where $C$ is an $m\times m$ symmetric matrix, $B=[b_1\ldots\ b_n]$ has no zero columns, and $D=\diag(d_1,\ldots,d_n)$. If $M$ is orthogonal, then $n\leq m$, the vectors $b_1,\ldots, b_n$ are pairwise orthogonal, and each $b_i$ is a $(-d_i)$-eigenvector for $C$.
\end{lemma}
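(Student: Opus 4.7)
The plan is to exploit that $M$ is both symmetric and orthogonal, so $M$ is an involution: $M^2 = M M^T = I$. Squaring the block form and comparing with $I_{m+n}$ will give three matrix equations, from which all three conclusions follow almost immediately.

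First I would write
\[
M^2 = \begin{bmatrix} C^2 + BB^T & CB + BD \\ B^T C + DB^T & B^T B + D^2 \end{bmatrix} = \begin{bmatrix} I_m & 0 \\ 0 & I_n \end{bmatrix}.
\]
The off-diagonal block equation $CB + BD = 0$ rewrites columnwise as $C b_i = -d_i b_i$ for every $i$. Since the hypothesis rules out $b_i = 0$, each $b_i$ is a genuine eigenvector of $C$ with eigenvalue $-d_i$, giving the third conclusion.

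Next, the bottom-right block equation $B^T B + D^2 = I_n$ reads, entrywise, as $b_i^T b_j = \delta_{ij} - d_i^2 \delta_{ij}$. The off-diagonal entries ($i\ne j$) immediately give pairwise orthogonality $b_i^T b_j = 0$, which is the second conclusion. The diagonal entries $\|b_i\|^2 = 1 - d_i^2$ are a byproduct; combined with $b_i\neq 0$ they also confirm $d_i^2 < 1$, which is not needed but is consistent.

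Finally, for $n \leq m$: the vectors $b_1, \ldots, b_n$ are pairwise orthogonal and all nonzero, so they are linearly independent in $\mathbb{R}^m$, forcing $n\le m$. I do not expect a hard step here; the only subtlety worth flagging is the implicit use of symmetry of $M$ (which makes $M^2=I$ available instead of the weaker $M^T M = I$), together with the no-zero-column hypothesis which is what prevents the degenerate conclusion $b_i = 0$ from satisfying $Cb_i = -d_i b_i$ trivially. The top-left block equation $C^2 + BB^T = I_m$ is not needed for the stated conclusions.
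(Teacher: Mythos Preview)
Your proof is correct and essentially matches the paper's argument. The only cosmetic difference is that the paper reads off pairwise orthogonality of the $b_i$ (and hence $n\le m$) directly from the orthogonality of the last $n$ columns of $M$, whereas you extract the same information from the $(2,2)$-block equation $B^TB+D^2=I_n$; these are the same computation, and both versions then use the $(1,2)$-block $CB+BD=0$ for the eigenvector conclusion.
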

\begin{proof}
Consider the last $n$ columns of $M$. Since $D$ is diagonal, these columns are pairwise orthogonal if and only if $n\leq m$ and the vectors $b_1,\ldots, b_n$ are pairwise orthogonal. 

Second, expanding $M^2=I$ we have
\[
M^2=\left[\begin{array}{rr}
C^2+BB^T & CB+BD\\
B^TC+DB^T & B^TB+D^2\end{array}\right]=\left[\begin{array}{cc}
I_m & 0\\
0 & I_n\end{array}\right].
\]
From the $(1,2)$-block we see $CB+BD=0$. Rearranging this equation we have
\[
[Cb_1\ldots Cb_n]=-[d_1b_1\ldots d_nb_n]
\]
from which it follows that each $b_i$ is a $(-d_i)$-eigenvector for $C$.
\end{proof}
As an illustration of Lemma \ref{OrthLemma}, suppose $G$ is a connected bipartite graph with bipartition $V=V_1 \cup V_2$. Further assume that $|V_1|=|V_2|$. It follows that if $q(G)=2$, then there exists a matrix $M \in S(G)$, where 
\[
M=\left[\begin{array}{cc}
C & B\\
B^T & D
\end{array}\right],
\] where $M^2=I$, and both $C$ and $D$ are diagonal matrices. Applying Lemma \ref{OrthLemma} we have that $B$ must be a matrix with orthogonal rows and columns. In fact, the converse also holds in this case. If such an orthogonal matrix $B$ exists, then the matrix
\[
M=\frac{1}{\sqrt{2}}\left[\begin{array}{cc}
I & B\\
B^T & -I
\end{array}\right]
\] is orthogonal and hence $q(G)=2$.
%

%%%%%%%%%%%%%%%%%%%%%%%%%%%%%%%%%%%%%%%%%%%%%%%%%%%%%%%%%%%%%
\section{Certain Regular Graphs that Allow Two Distinct Eigenvalues}\label{Regular}
%%%%%%%%%%%%%%%%%%%%%%%%%%%%%%%%%%%%%%%%%%%%%%%%%%%%%%%%%%%%%

From Theorem 3.1 in \cite{AllowsProblem}, if a connected graph $G$ has fewer than $2n-4$ edges, then $q(G)>2$. This implies that there are only finitely many $r$-regular graphs with $r\leq 3$ that satisfy $q(G)=2$. We describe them below.
\begin{lemma}\label{RegularN}
Let $r\leq 3$. If $G$ is a connected $r$-regular graph with $q(G)=2$, then $G$ has at most $8/(4-r)$ vertices.
\end{lemma}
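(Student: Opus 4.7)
The plan is a direct edge-count argument using the previously cited bound. By Theorem 3.1 in \cite{AllowsProblem}, any connected graph $G$ on $n$ vertices with $q(G)=2$ must satisfy $|E(G)|\geq 2n-4$. For an $r$-regular graph on $n$ vertices, the handshake lemma gives $|E(G)|=rn/2$. Combining these, we would write
\[
\frac{rn}{2}\geq 2n-4,
\]
and, since $r\leq 3<4$ so that $4-r>0$, rearranging yields $(4-r)n\leq 8$, i.e.\ $n\leq 8/(4-r)$, which is exactly the claimed bound.

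There is essentially no obstacle here: the entire argument is one inequality manipulation, and it has already been set up by the discussion preceding the lemma (which recalled the $2n-4$ edge lower bound). The only things to be careful about are (i) explicitly invoking that $r<4$ so the division by $4-r$ preserves the direction of the inequality, and (ii) noting that $|E(G)|=rn/2$ requires $rn$ to be even, but this is automatic for any $r$-regular graph and does not affect the bound.

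As a sanity check of the resulting numerical content that subsequent results in Section \ref{Regular} will presumably exploit: for $r=1$ the bound is $n\leq 8/3$, forcing $n=2$ and hence $G=K_2$; for $r=2$ the bound is $n\leq 4$, leaving $C_3$ and $C_4$ as the only candidates; and for $r=3$ the bound is $n\leq 8$, so there are only finitely many cubic graphs to inspect. Thus the lemma reduces the classification of $r$-regular graphs with $q(G)=2$ for $r\leq 3$ to a finite check, which is the role it plays in what follows.
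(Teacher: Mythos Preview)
Your proof is correct and is essentially identical to the paper's own argument: both combine the edge lower bound $|E(G)|\geq 2n-4$ from \cite{AllowsProblem} with $|E(G)|=rn/2$ and rearrange using $r<4$. The additional sanity checks you include are fine but not needed for the lemma itself.
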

\begin{proof}
In order for $G$ to have $q(G)=2$, $G$ has to have at least $2n-4$ edges. So we have the inequality
\[
(r/2)n\geq 2n-4.
\]
Since $0<r/2<2$, this simplifies to $n\leq 8/(4-r)$.
\end{proof}
\begin{corollary}\label{Regular123}
If $G$ is a connected $r$-regular graph with $q(G)=2$ for some $r\leq 3$, then $G$ is one of:
\begin{enumerate}[(1)]
\item $K_2$;
\item $K_3$ or $C_4$; or,
\item $K_4$, $K_{3,3}$, $K_3\Box K_2$, or $Q_3$.
\end{enumerate}
\end{corollary}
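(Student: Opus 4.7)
The plan is to apply Lemma~\ref{RegularN} to bound $n$ in terms of $r$, then enumerate all connected $r$-regular graphs within that bound, verifying $q(G)=2$ for the listed graphs by producing an appropriate orthogonal matrix in $\mathcal{S}(G)$ and ruling out the remaining candidates via Lemma~\ref{shortest paths lemma}. For $r=1$ the bound gives $n\le 2$, so $G=K_2$, and the permutation matrix $\bigl(\begin{smallmatrix}0&1\\1&0\end{smallmatrix}\bigr)$ certifies $q(K_2)=2$. For $r=2$ connected $2$-regular graphs are cycles and the bound gives $n\le 4$, leaving $K_3$ or $C_4$; the matrix $J-I$ works for $K_3$, and for $C_4$ the bipartite construction from the remark following Lemma~\ref{OrthLemma} applies with $B$ equal to the $2\times 2$ Hadamard matrix.

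For $r=3$ the bound yields $n\le 8$ with $n$ even. When $n=4$ the only candidate is $K_4$, for which $J-I$ again works. For $n=6$ there are exactly two connected cubic graphs, namely $K_{3,3}$ and the triangular prism $K_3\Box K_2$. I would handle $K_{3,3}$ via the bipartite remark using any $3\times 3$ orthogonal matrix with all nonzero entries (for example $\tfrac{1}{3}\bigl(\begin{smallmatrix}1&2&2\\2&1&-2\\2&-2&1\end{smallmatrix}\bigr)$). For the prism I would try the block ansatz $M=\bigl(\begin{smallmatrix}C & zI\\ zI & -C\end{smallmatrix}\bigr)$ with $C=xI+y(J-I)$; since $zI$ commutes with $C$, the condition $M^2=I$ collapses to $C^2+z^2I=I$, and a short calculation gives the family $y=-2x$, $9x^2+z^2=1$, which produces an explicit $M\in\mathcal{S}(K_3\Box K_2)$ with exactly two distinct eigenvalues.

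The substantive case is $n=8$. There are precisely five connected cubic graphs on eight vertices (a standard enumeration). One of them is $Q_3$, which is bipartite with equal parts of size $4$; its bipartite adjacency matrix has exactly one zero per row and per column, and a $4\times 4$ orthogonal matrix with this zero pattern can be written down directly, so the bipartite remark gives $q(Q_3)=2$. For each of the other four cubic graphs on eight vertices I plan to exhibit a pair of nonadjacent vertices $x,y$ with a unique length-$2$ path between them, which by Lemma~\ref{shortest paths lemma} forces $q(G)>2$. The M\"obius ladder $M_4$ (i.e., $C_8$ together with its four ``diameters'') illustrates the idea: vertices $0$ and $2$ of the base cycle share only the common neighbor $1$ and are nonadjacent. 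The other three cubic graphs admit analogous short-path obstructions, identified by case-by-case inspection.

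The hard part will be the $n=8$ subcase: correctly enumerating the five connected cubic graphs on eight vertices and locating a forbidden configuration in each non-$Q_3$ example. A cleaner route I would attempt in parallel is to appeal directly to the characterization of the equality case of the $|E(G)|\ge 2n-4$ bound from \cite{AllowsProblem}: a cubic graph on eight vertices has exactly $12=2n-4$ edges, so that characterization should immediately restrict which such graphs admit $q(G)=2$ and so single out $Q_3$ without any case-by-case inspection.
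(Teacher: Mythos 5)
Your proposal is correct and follows essentially the same route as the paper: bound $n$ via the $2n-4$ edge count (Lemma~\ref{RegularN}), then enumerate the candidates in each case. The one substantive remark is that for $n=8$ your ``cleaner route'' is exactly what the paper does --- a cubic graph on $8$ vertices has precisely $2n-4=12$ edges, so Theorem 3.1 of \cite{AllowsProblem} forces $G\cong Q_3$ with no five-graph case analysis needed --- and the remaining differences are cosmetic (you exhibit explicit orthogonal matrices for $K_{3,3}$ and $K_3\Box K_2$ where the paper cites Corollaries 6.5 and 6.8 of \cite{MR3118943}, and you enumerate the two cubic graphs on six vertices directly where the paper passes to complements).
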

\begin{proof}
The graph $K_2$ is the only connected $1$-regular graph and has $q(K_2)=2$.  The only connected $2$-regular graphs on $n \le 4$ vertices are $K_3$ and $C_4$ and both have $q(G)=2$.  By Lemma \ref{RegularN}, a $3$-regular graph with $q(G)=2$ has $4, 6$ or $8$ vertices.  If $n=4$, $G=K_4$ and $q(G)=2$. If $n=6$, the complement of $G$ is $2$-regular and so must be $C_6$ or $2K_3$. Thus $G$ is either $K_3\Box K_2$ or $K_{3,3}$, respectively, both of which have $q(G)=2$ from Corollaries 6.5 and 6.8 in \cite{MR3118943}.  If $n=8$, $G$ has $12 = 2(8)-4$ edges.  Thus by Theorem 3.1 from \cite{AllowsProblem}, $G\cong Q_3$.
%\begin{figure}[h]
%\centering
%\begin{tikzpicture}[scale=.7, vrtx/.style args = {#1/#2}{% 
%      circle, draw, fill=black, inner sep=0pt,
%      minimum size=4pt, label=#1:#2}]
%
%\node (1) [vrtx=left/] at (0,2) {};
%\node (2) [vrtx=right/] at (4,2) {};
%\node (3) [vrtx=above/] at (1.333,1) {};
%\node (4) [vrtx=above/] at (2.667,1) {};
%\node (5) [vrtx=left/] at (0,0) {};
%\node (6) [vrtx=right/] at (4,0)  {};
%\draw (1) edge (2);
%\draw (1) edge (3);
%\draw (1) edge (5);
%\draw (2) edge (4);
%\draw (2) edge (6);
%\draw (3) edge (4);
%\draw (3) edge (5);
%\draw (4) edge (6);
%\draw (5) edge (6);
%\end{tikzpicture}
%\caption{$3$-prism}\label{3Reg}
%\end{figure}
\end{proof}

We now proceed with the main purpose of this paper, to characterize the $4$-regular graphs $G$ with $q(G) = 2$. We begin by defining an infinite family of graphs called closed candles which are analogs to the single-ended and double-ended candles in \cite{AllowsProblem}. For $k\geq 3$ the \emph{closed candle}, $H_k$, is constructed from $2C_k$ as follows.  Label the vertices of one $C_k$ with the odd integers from 1 to $2k-1$ and the other with the even integers from 2 to $2k$.  Insert $2k$ additional edges between the two $C_k$'s according to the rule: $i$ is adjacent to $j$, $i$ odd, $j$ even if $j-i = 3$, $j-i = -1$, or $j=2$, $i=2k-1$, or $i=1$, $j=2k$. Thus $H_k$ is a $4$-regular graph. The graph $H_{10}$ is shown in Figure \ref{closedCandle}.
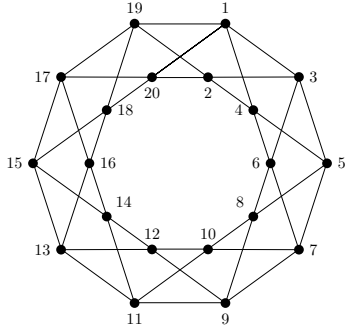
\begin{figure}[ht!]
\centering
\scalebox{.6}{\begin{tikzpicture}[scale=1]
    \node[regular polygon,
    regular polygon sides=10,
    minimum size=6.5cm,
    draw] at (0,0) (B) {};
    \foreach \i [evaluate={\Name=int(\i)}] in {1}
    \node[circle, scale=0.65,
    label=above:{$\Name$},
    fill=black] at (B.corner \i) {};
\foreach \i [evaluate={\Name=int(-2*\i+23)}] in {3,4,5}
    \node[circle, scale=0.65,
    label=left:{$\Name$},
    fill=black] at (B.corner \i) {};
    \foreach \i [evaluate={\Name=int(-2*\i+23)}] in {8,9,10}
    \node[circle, scale=0.65,
    label=right:{$\Name$},
    fill=black] at (B.corner \i) {};
     \foreach \i [evaluate={\Name=int(-2*\i+23)}] in {2}
    \node[circle, scale=0.65,
    label=above:{$\Name$},
    fill=black] at (B.corner \i) {};
     \foreach \i [evaluate={\Name=int(-2*\i+23)}] in {6,7}
    \node[circle, scale=0.65,
    label=below:{$\Name$},
    fill=black] at (B.corner \i) {};
\node[regular polygon,
    regular polygon sides=10,
    minimum size=4cm,
    draw] at (0,0) (A) {};
    \foreach \i [evaluate={\Name=int(\i+1)}] in {1}
    \node[circle, scale=0.65,
    label=below:{$\Name$},
    fill=black] at (A.corner \i) {};
\foreach \i [evaluate={\Name=int(-2*\i+24)}] in {3,4}
    \node[circle, scale=0.65,
    label=right:{$\Name$},
    fill=black] at (A.corner \i) {};
    \foreach \i [evaluate={\Name=int(-2*\i+24)}] in {6,7}
    \node[circle, scale=0.65,
    label=above:{$\Name$},
    fill=black] at (A.corner \i) {};
     \foreach \i [evaluate={\Name=int(-2*\i+24)}] in {5}
    \node[circle, scale=0.65,
    label=above right:{$\Name$},
    fill=black] at (A.corner \i) {};
     \foreach \i [evaluate={\Name=int(-2*\i+24)}] in {2}
    \node[circle, scale=0.65,
    label=below:{$\Name$},
    fill=black] at (A.corner \i) {};
    \foreach \i [evaluate={\Name=int(-2*\i+24)}] in {9,10}
    \node[circle, scale=0.65,
    label=left:{$\Name$},
    fill=black] at (A.corner \i) {};
    \foreach \i [evaluate={\Name=int(-2*\i+24)}] in {8}
    \node[circle, scale=0.65,
    label=above left:{$\Name$},
    fill=black] at (A.corner \i) {};
\draw (A.corner 10)--(B.corner 9)--(A.corner 8)--(B.corner 7)--(A.corner 6)--(B.corner 5)--(A.corner 4)--(B.corner 3)--(A.corner 2)--(B.corner 1)--(A.corner 2)--(B.corner 1)--(A.corner 10);
\draw (A.corner 1)--(B.corner 2)--(A.corner 3)--(B.corner 4)--(A.corner 5)--(B.corner 6)--(A.corner 7)--(B.corner 8)--(A.corner 9)--(B.corner 10)--(A.corner 1);
\end{tikzpicture}}
\caption{Closed candle $H_{10}$.}\label{closedCandle}
\end{figure}

In the proof of Theorem \ref{Regular4}, we will consider induced subgraphs that have the same structure as a closed candle. A \emph{candle section} is a graph with vertices $u_1,\ldots,u_t, v_1,\ldots,v_t$ with edges $u_iu_{i+1}$, $v_iv_{i+1}$, $u_{i+1}v_i$, and $u_iv_{i+1}$ for $1\leq i\leq t-1$, see Figure \ref{CandleSection}.
\begin{figure}[ht!]
\centering
\begin{tikzpicture}[scale=0.5, vrtx/.style args = {#1/#2}{% 
      circle, draw, fill=black, inner sep=0pt,
      minimum size=6pt, label=#1:#2}]
\node (1) [vrtx=above/$u_1$] at (2,1) {};
\node (2) [vrtx=above/$u_2$] at (4,1) {};
\node (3) [vrtx=above/$u_3$] at (6,1) {};
\node (k-3) [vrtx=above/$u_{t-2}$] at (10,1) {};
\node (k-2) [vrtx=above/$u_{t-1}$] at (12,1)  {};
\node (k-1) [vrtx=above/$u_{t}$] at (14,1)  {};
\node (k+1) [vrtx=below/$v_t$] at (14,-1) {};
\node (k+2) [vrtx=below/$v_{t-1}$] at (12,-1)  {};
\node (k+3) [vrtx=below/$v_{t-2}$] at (10,-1)  {};
\node (2k-1) [vrtx=below/$v_3$] at (6,-1) {};
\node (2k) [vrtx=below/$v_2$] at (4,-1)  {};
\node (2k+1) [vrtx=below/$v_1$] at (2,-1)  {};
\node (dotsu) at (8,1) {$\ldots$};
\node (dotsc) at (8,0) {$\ldots$};
\node (dotsl) at (8,-1) {$\ldots$};
\draw (1) edge (2);
\draw (2) edge (3);
\draw (k-3) edge (k-2);
\draw (k-2) edge (k-1);
\draw (k+1) edge (k+2);
\draw (k+2) edge (k+3);
\draw (2k-1) edge (2k);
\draw (2k) edge (2k+1);
\draw (1) edge (2k);
\draw (2) edge (2k+1);
\draw (2) edge (2k-1);
\draw (3) edge (2k);
\draw (k-3) edge (k+2);
\draw (k-2) edge (k+3);
\draw (k-2) edge (k+1);
\draw (k-1) edge (k+2);
\end{tikzpicture}
\caption{Candle section.}\label{CandleSection}
\end{figure}

The following lemma gives a construction of orthogonal matrices for the closed candles.

\begin{lemma}\label{qCandle}
For all $k\geq 3$, we have $q(H_k)=2$.
\end{lemma}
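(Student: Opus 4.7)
My plan is to produce an explicit symmetric orthogonal matrix $M\in \mathcal{S}(H_k)$; since such a matrix has exactly two distinct eigenvalues ($\pm 1$), this forces $q(H_k)\leq 2$, and because $H_k$ has edges we then get $q(H_k)=2$. The construction will exploit the natural $\mathbb{Z}_k$-rotational symmetry of $H_k$.

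First, I relabel the vertices to expose the symmetry: put $u_i := 2i-1$ and $v_i := 2i$ for $i=1,\dots,k$, with all pair-indices read modulo $k$. A direct check of the adjacency rules defining $H_k$ shows that within each pair $(u_i,v_i)$ there is no edge; between consecutive pairs $(u_i,v_i)$ and $(u_{i+1},v_{i+1})$ all four edges $u_iu_{i+1}$, $v_iv_{i+1}$, $u_iv_{i+1}$, $v_iu_{i+1}$ are present (a candle section in the sense of Figure \ref{CandleSection}); and non-consecutive pairs have no edges. So $H_k$ is a cyclic chain of $k$ candle sections with a $\mathbb{Z}_k$-rotation acting by shifting the pair index. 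This strongly suggests a block-circulant ansatz on the layers of size $2$.

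Ordering the vertices as $u_1,v_1,u_2,v_2,\dots,u_k,v_k$, I would set
\[
B = \tfrac{1}{2}\begin{pmatrix} 1 & 1 \\ -1 & -1 \end{pmatrix},
\]
and take $M$ to be the $2k\times 2k$ block-circulant matrix whose diagonal $2\times 2$ block is $0$, whose super-diagonal block is $B$, whose sub-diagonal block is $B^{T}$, and whose remaining blocks are $0$. Every entry of $B$ is nonzero, so the zero/nonzero pattern of $M$ matches $H_k$ exactly; pairing $B$ with $B^{T}$ makes $M$ symmetric, hence $M\in \mathcal{S}(H_k)$. To check $M^{2}=I$, the key $2\times 2$ identities are
\[
B^{2}=0 \quad\text{and}\quad BB^{T}+B^{T}B=I_{2}.
\]
A block-by-block expansion then shows that every diagonal block of $M^{2}$ collapses to $BB^{T}+B^{T}B=I_{2}$, while every off-diagonal block is a sum of terms of the form $B^{2}$, $(B^{T})^{2}$, or a product containing the zero diagonal block, and hence vanishes. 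This argument is uniform in $k\geq 3$.

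The main obstacle is spotting the block $B$. Conceptually, once $M$ is assumed block-circulant with zero within-layer block, Fourier diagonalization reduces orthogonality to asking that each character block $\widehat M(\ell)=\omega^{\ell}B+\omega^{-\ell}B^{T}$ be a Hermitian involution for every $k$-th root of unity $\omega^{\ell}$. Matching Fourier modes forces exactly the two identities $B^{2}=0$ and $BB^{T}+B^{T}B=I_{2}$ displayed above, and the rank-one choice of $B$ above is an explicit no-zero-entries solution that satisfies both; this is what lets one construction cover the entire family $\{H_k\}_{k\geq 3}$ at once.
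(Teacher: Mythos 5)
Your construction is correct, and it reaches the same goal as the paper's proof---an explicit symmetric orthogonal matrix in $\mathcal{S}(H_k)$ built from $2\times 2$ blocks along the cyclic chain of pairs---but by a cleaner and more uniform route. The paper likewise partitions the vertices into the pairs $\{2i-1,2i\}$ and places $2\times 2$ blocks at positions $(i,i\pm 1)$ and $(1,k)$, but it works with the symmetric blocks $R$ and $J$; since $R^2\neq 0$ and $J^2\neq 0$ while $RJ=JR=0$, those blocks must alternate around the cycle, which forces a parity split on $k$, a patch with $S$ and $S^T$ when $k$ is odd, and an outside citation to handle $H_3$. Your single nilpotent block $B=\tfrac12\left(\begin{smallmatrix}1&1\\-1&-1\end{smallmatrix}\right)$ (which is $\tfrac12 S$ in the paper's notation) on every superdiagonal position, with $B^T$ below, removes all of that: the identities $B^2=0$ and $BB^T+B^TB=I_2$ do hold by direct computation, they are exactly what the block expansion of $M^2$ requires, and the argument is uniform in $k\geq 3$, including $k=3$ and $k=4$ where the block offsets $\pm 2$ wrap around---the resulting extra terms are still $B^2$ or $(B^T)^2$ and hence vanish. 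The one step you should spell out fully in a final write-up is the identification of $H_k$ with the cyclic chain of completely joined pairs $(u_i,v_i)=(2i-1,2i)$, which you correctly read off from the adjacency rules $j-i=3$, $j-i=-1$ and the two wrap-around edges; with that in place your block-circulant verification is complete and arguably preferable to the paper's case analysis.
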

\begin{proof}

To see that the graphs $H_k$ for $k\geq 3$ achieve two distinct eigenvalues, we construct orthogonal matrices for this family of graphs.  

Let 
\begin{eqnarray*}
R=\left[ \begin{array}{rr} 
-1 & 1 \\
1 & -1 \end{array} 
\right], \ 
S= \left[ 
\begin{array}{rr}
1 & 1 \\
-1 & -1 \end{array}
\right], \ 
J= \left[ \begin{array}{rr}
\phantom{-}1 & 1 \\
1 & \phantom{-}1 \end{array} \right], \ \textrm{and} \ 
O= \left[ \begin{array}{rr}
0 & 0 \\
0 & 0 \end{array} \right].
\end{eqnarray*}

We consider two cases for $k$ and construct the corresponding matrices $W$. Here all of the blocks are $2\times 2.$ The matrix in each case is symmetric so the blocks below the diagonal blocks are transposes of the corresponding matrices. \\

\noindent\emph{Case 1:} $n=2k$ and $k\geq 4$ is even.

\begin{equation*}
W_{ij}=
    \begin{cases}
      R ,& \for (i,j)= (1,2),(3,4), \ldots,(k-3,k-2),(k-1,k) \ \\
      J,& \for (i,j)=(2,3),(4,5), \ldots,(k-2,k-1) \\
      J ,& \for (i,j)=(1,k)\\
      O,& \textrm{otherwise}\\
    \end{cases}
\end{equation*}

\noindent\emph{Case 2:} $n=2k$ and $k\geq 3$ is odd.

First, note that when $k=3$, the graph $H_3$ is the octahedron (the graph obtained from $K_6$ by deleting a perfect matching). By Corollary 6.9 in \cite{MR3904092}, we have $q(H_3)=q(G204)=2$. Now if $k\geq 5$, we construct the matrices as follows. 

\begin{equation*}
W_{ij}=
    \begin{cases}
      J ,& \for (i,j)= (1,2),(3,4), \ldots, (k-4,k-3) \ \\
      R,& \for (i,j)=(2,3), (4,5), \ldots, (k-5,k-4),\ \ {\rm and} \  (k-2,k-1);\\
      S ,& \for (i,j)=(1,k), (k-3,k-2)\\
      S^{T} ,& \for (i,j)=(k-1,k)\\
      O,& \textrm{otherwise}\\
    \end{cases}
\end{equation*}

Each row of $W$ has Euclidean length $2$.
Since  each of the $2 \times 4$ matrices
$[J \; S]$, $[J\; R]$, $[S^T\; R]$,   $[S \; S^T]$  
have orthogonal rows, each pair of rows of $W$ coming from the same block 
are orthogonal.  For $i\neq j$, the $(i,j)$-block of $W^2$ is one 
of $JR$, $JS$, $SR$,  $S^2$ or their transposes.
In each of the cases the matrix is the zero matrix.  
Hence $W^T W=W^2= 4I$.  We conclude that $\frac{1}{2} W$ is an orthogonal matrix with whose graph 
is the closed candle on $n=2k$ vertices. 
\end{proof}

For example, the matrices for $k=4$ and $k=5$ are respectively

\[
\left[ \begin{array}{ccccc}
 O &R &O &J \\
R & O& J &  O \\ 
   O &J & O & R \\
   J & O & R &O \\
\end{array} \right]
\quad
\mbox{and} 
\quad
\left[ \begin{array}{ccccc}
 O &J &O &O  & S \\
J & O& S & O & O \\
\\
[-10pt]
   O &S^T & O & R & O \\
   O & O & R &O & S^T\\
S^T   & O & O &S & O
\end{array} \right];
\]
for $k=6$ and $k=7$, the matrices are respectively
\[
\left[ \begin{array}{cccccc}
 O &R &O &O &O & J \\
R & O& J & O & O & O\\
O & J & O & R & O & O \\
 O & O & R& O& J & O \\ 
 O & O & O &J & O & R\\
 J & O & O & O & R &O \\
\end{array} \right]
\quad
\mbox{and}
\quad
\left[ \begin{array}{ccccccc}
 O &J &O &O &O & O & S \\
J & O& R & O & O & O & O\\
O & R & O & J & O&O & O \\
 O & O & J& O & S & O &O \\
 \\ 
 [-10pt]
 O & O & O &S^T & O & R & O \\
 O & O & O & O & R &O & S^T\\
S^T  & O & O & O & O &S & O
\end{array} \right].
\]
\begin{comment}
for $k=9$ is
 \[
\left[ \begin{array}{ccccccccc}
O & J  & O & O &O &O &O & O & S \\
J & O & R & O& O & O & O & O & O\\
O & R & O & J & O & O & O & O & O\\
O & O & J & O &R & O & O & O & O \\
O & O & O & R & O & J & O & O & O \\
O & O & O & O & J& O & S & O &O \\ 
\\
[-10pt]
O & O & O & O & O &S^T & O & R & O \\
O & O & O & O & O & O & R &O & S^T\\
S^T & O & O & O & O & O & O &S & O
\end{array} \right]
\]
\end{comment}

Note that the closed candle $H_k$ has independence number $k$ if $k$ is even, and $k-1$ if $k$ is odd. By Lemma \ref{maxindependnet} when $k$ is even the only achievable multiplicity list for two distinct eigenvalues is $[k,k]$; when $k$ is odd, the only achievable multiplicity lists for two distinct eigenvalues are $[k,k]$ and $[k-1,k+1]$.

%From Corollary 6.8 in \cite{MR3118943}, if $G$ is any graph with $q(G)=2$, then $q(G\Box K_2)=2$. Define $H_k^d=H_k\Box Q_d$ for any $d\geq 0$ and $k\geq 3$.
%
%\begin{proposition}\label{RegFamily}
%For all $k\geq 3$ and $d\geq 0$, $H_k^d$ is a $(4+d)$-regular graph with $q(H_k^d)=2$.
%\end{proposition}

%For a fixed $r\geq 4$, Proposition \ref{RegFamily} gives infinite family of $r$-regular graphs with $q(G)=2$. 

Lemma \ref{qCandle} provides an infinite family of $4$-regular graphs with $q(G)=2$. Our main theorem below characterizes all $4$-regular graphs $G$ for which $q(G)=2$.  

\begin{theorem}\label{Regular4}
If $G$ is a connected 4-regular graph with $q(G)=2$, then $G$ is either:
\begin{enumerate}[(1)]
\item $K_5$;
\item one of the graphs $R_{7,1}$, $R_{8,2}$, $R_{8,3}$, $R_{8,4}$, $R_{8,5}$, $R_{8,6}$ from Figure \ref{RGraphs};
\item $K_3\Box C_4$, $K_{3,3}\Box K_2$, one of the graphs $R_{10,2}$, $R_{10,3}$, $R_{10,4}$, $R_{12,3}$, $R_{14,1}$ from Figure \ref{RGraphs2};
\item $Q_4$; or,
\item a closed candle $H_k$ for some $k \ge 3$. 
\end{enumerate}
\end{theorem}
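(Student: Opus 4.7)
\begin{proofof}{Theorem \ref{Regular4} (plan)}
The forward direction (showing each graph in the list satisfies $q(G)=2$) is done graph-by-graph. The closed candles are already handled by Lemma \ref{qCandle}; $K_5$ is a standard example since $q(K_n)=2$; $Q_4$, $K_3\Box C_4$, and $K_{3,3}\Box K_2$ can be handled using the bipartite consequence of Lemma \ref{OrthLemma} recorded after its proof (for the bipartite ones, one exhibits an orthogonal matrix $B$ with the correct zero pattern), together with known results on hypercubes and Cartesian products. For each of the finitely many sporadic graphs $R_{n,i}$, an explicit symmetric orthogonal matrix in $\mathcal{S}(G)$ is constructed; this is a finite verification.

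The substantive content is the reverse direction. Assume $G$ is a connected $4$-regular graph on $n$ vertices with $q(G)=2$. Fix any vertex $v$ and take the distance partition $V(G)=N_0\cup N_1\cup\cdots\cup N_\epsilon$ with $\epsilon=\epsilon(v)$. By Lemma \ref{shortest paths lemma}, each vertex in $N_i$ with $i\ge 2$ has at least two predecessors, so the edge count carried out in the preliminaries leaves exactly six ``extra edges'' to distribute among (i) edges inside $N_1$, (ii) edges inside some $N_i$ for $i\ge 2$, and (iii) extra predecessor edges (third or fourth predecessors of a vertex). The plan is to run a case analysis on $\epsilon(v)$ and on how these six extra edges are placed, repeatedly invoking Lemma \ref{shortest paths lemma} to force the appearance of second length-two paths whenever a potential unique one arises.

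The base cases $\epsilon(v)=1$ and $\epsilon(v)=2$ immediately yield $n=5$ (forcing $K_5$) or strongly constrain the structure of $N_2$. For $\epsilon(v)=2$, one analyzes the adjacency pattern inside $N_1$ and from $N_2$ to $N_1$: since each vertex of $N_2$ uses two predecessors and possibly more, and since Lemma \ref{shortest paths lemma} applied to paths $xuy$ with $x,y\in N_1$ or with $x=v$ forces many additional length-two paths, the small cases can be enumerated and are shown to reduce to $K_{3,3}$ (already handled), $K_3\Box K_2$, $K_5$, the graphs $R_{7,1}$, and the graphs $R_{8,j}$. For $\epsilon(v)=3$ one similarly controls the ``profile'' $(|N_1|,|N_2|,|N_3|)=(4,|N_2|,|N_3|)$ with the constraint that at least $2|N_2|+2|N_3|-6$ non-required edges are available, and exhausts the cases to identify $Q_4$, $K_3\Box C_4$, $K_{3,3}\Box K_2$, and the $R_{10,\cdot}$ and $R_{12,3}$ examples. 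For $\epsilon(v)\ge 4$, the extra-edge budget of six becomes tight: one shows that for large $\epsilon$, the successive distance sets must each have size exactly $2$, every interior vertex must have exactly two predecessors, and the only configuration consistent with Lemma \ref{shortest paths lemma} is a candle section. The boundary conditions at both ends force the two $4$-cycles ``closing'' the candle, recovering $H_k$.

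The main obstacle is handling the middle range of eccentricities. When $\epsilon(v)$ is small, six extra edges are enough to allow many combinatorial configurations, producing the sporadic list, and one must show that no other configuration survives the test of Lemma \ref{shortest paths lemma} or the global constraint that $G$ be $4$-regular; conversely, when $\epsilon(v)$ is large, one must rule out candle-like structures that deviate from $H_k$ (for instance by inserting a transposition in how successive $N_i$ pairs are joined), and also exclude exotic configurations where a third predecessor appears. The delicate part is the transition: around $\epsilon(v)=4$ or $5$, one must either force $|N_i|=2$ throughout (leading to a candle) or find a sporadic example; verifying that no further sporadic examples exist beyond $R_{14,1}$ is the crux, and will be handled by showing that any $G$ not isomorphic to one of $R_{14,1}$ or a closed candle admits a vertex whose BFS exceeds the extra-edge budget or creates a forbidden unique length-two path.
\end{proofof}
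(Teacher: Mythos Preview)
Your overall strategy---BFS from a vertex, count the six extra edges, and use Lemma~\ref{shortest paths lemma} repeatedly---is exactly the paper's framework. But what you have written is a plan, not a proof, and several of the concrete claims in it are wrong in ways that matter.

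First, some factual slips. You place $Q_4$ in the $\epsilon(v)=3$ case, but $Q_4$ has diameter $4$; conversely $R_{14,1}$ has diameter $3$ and belongs with the other sporadic diameter-$3$ graphs, not in the large-eccentricity tail. You mention $K_{3,3}$ and $K_3\Box K_2$ appearing in the $\epsilon(v)=2$ analysis, but those are $3$-regular. More seriously, your assertion that for large $\epsilon$ ``the successive distance sets must each have size exactly $2$'' is false: in a closed candle $H_k$, the interior distance sets from any vertex have size $4$, not $2$ (two parallel candle sections run side by side). Getting this wrong means your description of how the candle is forced is not the right mechanism.

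Second, and more substantively, the paper does \emph{not} organize the diameter~$\geq 3$ analysis by the value of $\epsilon(v)$. Instead it organizes by the maximum number of predecessors any vertex has in the distance partition: (i) if no vertex has $3$ or $4$ predecessors, $q(G)>2$; (ii) if some vertex has $3$ predecessors but none has $4$, then $G\cong R_{10,3}$ or $K_3\Box C_4$; (iii) if some vertex has $4$ predecessors, a further case split on how many such vertices lie in $N_d(v)$ (three, two, one, or none) recovers the remaining sporadic graphs, $Q_4$, and the closed candles. This predecessor-count decomposition is the key structural idea that makes the case analysis finite and tractable; organizing by $\epsilon(v)$ alone, as you propose, does not give you a clean handle on where the six extra edges go, and your sketch gives no indication of how you would close the ``middle range'' you yourself flag as delicate.

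Finally, for diameter $2$ the paper does not enumerate by hand: it bounds $|V(G)|\leq 10$, uses \texttt{nauty} to generate all connected $4$-regular graphs on $6$--$10$ vertices, filters by Lemma~\ref{shortest paths lemma} down to thirteen candidates, and then disposes of each. If you intend a computer-free argument here you should say so and indicate why it is feasible; otherwise you should acknowledge that this step is a finite computation.
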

\noindent\emph{Notes:} The graphs listed in items (1) through (4) of Theorem \ref{Regular4} have diameter 1 through 4 respectively. The graph $R_{7,1}\cong C(7,\pm1,\pm2)$, the graph $R_{8,5}\cong K_4\Box K_2$, and the graph $R_{8,6}\cong C(8,\pm1,\pm2)$. The graph $R_{10,3}$ is the graph obtained from $Q_3$ by joined duplicating a pair of antipodal vertices (in \cite{AllowsProblem} we referred to this graph as $Q_3^{\prime}$). The graph $R_{10,4}\cong C(10, \pm 1, \pm 3)$, and the graph $R_{12,3}\cong C(12, \pm1, \pm3)$. The graph $R_{14,1}$ appears in \cite{MR2360149} as $S_{14}$. Moreover $R_{14,1}$ is the Cayley graph for the dihedral group $D_7$ (with generators $\rho$ and $\varphi$ satisfying $\rho^2=\varphi^7=\varepsilon$ and $\rho\varphi=\varphi^6\rho$) with connection set $\{\rho,\varphi\rho,\varphi^2\rho,\varphi^4\rho\}$. It is also the point-block incidence graph of the non-trivial square $2-(7,4,2)$ design and is distance regular with diameter $3$, see \cite{BCN}. The sporadic graphs $R_{6,1}$, $R_{8,1}$, and $R_{10,1}$ in Figure \ref{RGraphs} are $H_3$, $H_4$, and $H_5$, respectively, so appear in item (5) of Theorem \ref{Regular4}. Also note that the closed candles are all circulants. For $k\geq 3$, it can be verified that $H_k\cong C(2k,\pm1,\pm (k-1))$.

%The proof of Theorem \ref{Regular4} is split over Sections \ref{SmallDiameter} through \ref{LargeDiameter}. There are several cases to consider, but the strategy is the same as in the proof of Theorem 3.1 in \cite{AllowsProblem}. We assume $G$ is connected with $q(G)=2$ and follow the proof of Lemma 3.2 in \cite{AllowsProblem}. In the distance partition from any vertex $v$, every $u\in N_i(v)$ has at least two neighbors in $N_{i-1}(v)$ for all $i\geq 2$. Moreover, these edges account for $2n-2-\deg(v)=2n-6$ of the edges of $G$. To prove Theorem \ref{Regular4} we consider all possibilities for the locations of the six ``extra'' edges.

%When $r\geq 5$, the number of edges in an $r$-regular graph grows faster than the bound, so we expect extending this characterization to be difficult. Theorem \ref{Regular4} establishes there is only one infinite family of $4$-regular graphs with $q(G)=2$. In Proposition \ref{RegFamily} we use these graphs to construct an $r$-regular family for all $r\geq 5$. However, since we expect $r$-regular graphs with $q(G)=2$ to be more difficult to characterize for $r\geq 5$, it would be interesting to have examples of infinite families of $r$-regular graphs with $q(G)=2$ that can be constructed differently than the Cartesian product construction.

The proof of Theorem \ref{Regular4} is split over Sections \ref{SmallDiameter}, \ref{SporadicSection}, and \ref{LargeDiameter}.

%%%%%%%%%%%%%%%%%%%%%%%%%%%%%%%%%%%%%%%%%%%%%%%%%%%%%%%%%%%%%
\section{Proof of Theorem \ref{Regular4} for 4-Regular Graphs with Small Diameter}\label{SmallDiameter}
%%%%%%%%%%%%%%%%%%%%%%%%%%%%%%%%%%%%%%%%%%%%%%%%%%%%%%%%%%%%%

In this section we prove items (1) and (2) of Theorem \ref{Regular4} for graphs with diameter at most $2$. The only $4$-regular graph with diameter $1$ is $K_5$, and $q(K_5)=2$, so we focus on $4$-regular graphs with diameter $2$. We begin by enumerating the $4$-regular graphs $G$ with diameter $2$ for which $q(G)=2$ is not ruled out by Lemma \ref{shortest paths lemma}.
\begin{lemma}
If $G$ is a connected $4$-regular graph with diameter $2$ such that $q(G)=2$, then $6\leq |V(G)|\leq 10$.
\end{lemma}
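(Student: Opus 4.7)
The plan is to fix a vertex $v \in V(G)$ and analyze the distance partition $V(G) = \{v\} \cup N_1(v) \cup N_2(v)$. Since $G$ is $4$-regular, $|N_1(v)| = 4$; and since $K_5$, the unique $4$-regular graph on five vertices, has diameter $1$, the diameter-$2$ hypothesis forces $|N_2(v)| \geq 1$. This gives the lower bound $n \geq 6$.

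For the upper bound, I would first count edges incident with $N_1(v)$. Summing the degrees of the four vertices of $N_1(v)$ yields $16 = 4 + 2e_1 + e_{12}$, where $e_1$ denotes the number of edges inside $N_1(v)$ and $e_{12}$ denotes the number of edges between $N_1(v)$ and $N_2(v)$. By Lemma~\ref{shortest paths lemma}, every vertex of $N_2(v)$ has at least two predecessors, so $e_{12} \geq 2(n-5)$. Combining gives $n \leq 11$, with equality forcing $e_1 = 0$ and every vertex of $N_2(v)$ to have exactly two predecessors.

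The main obstacle is ruling out the extremal case $n = 11$. In that case $N_1(v)$ is independent, so for each pair $\{x,y\} \subset N_1(v)$ the path $xvy$ requires, by Lemma~\ref{shortest paths lemma}, a second common neighbor, which must lie in $N_2(v)$. Since there are $\binom{4}{2} = 6$ such pairs and each of the six vertices of $N_2(v)$ is the common neighbor for exactly one pair, the assignment of each vertex of $N_2(v)$ to its pair of predecessors is a bijection onto $\binom{N_1(v)}{2}$. Writing $N_1(v) = \{a,b,c,d\}$, I would label the vertices of $N_2(v)$ by their pairs of predecessors as $w_{ab}, w_{ac}, \ldots, w_{cd}$; then $N(c) = \{v, w_{ac}, w_{bc}, w_{cd}\}$, and for each $u = w_{ij} \in N_2(v)$, $N(u) = \{i, j, w', w''\}$ where $w'$ and $w''$ are the two neighbors of $u$ in $N_2(v)$.

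To conclude, I would apply Lemma~\ref{shortest paths lemma} once more, now to the non-adjacent pair $(u, c)$ with $u = w_{ab}$. Their common neighbors are $N(u) \cap N(c) = \{w', w''\} \cap \{w_{ac}, w_{bc}, w_{cd}\}$, which must have cardinality at least two; hence both $w'$ and $w''$ must contain $c$ in their label. The symmetric argument applied to the pair $(u, d)$ shows both $w'$ and $w''$ also contain $d$, forcing $w' = w'' = w_{cd}$, a contradiction. This establishes $n \leq 10$.
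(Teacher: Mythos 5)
Your proposal is correct, and the first half (the degree count giving $n\leq 11$ with equality forcing $N_1(v)$ independent and every vertex of $N_2(v)$ to have exactly two predecessors) matches the paper's opening computation. Where you genuinely diverge is in eliminating $n=11$. The paper observes that $G[N_2(v)]$ must be $2$-regular and then splits into two cases, $G[N_2(v)]\cong C_6$ and $G[N_2(v)]\cong 2C_3$, hunting down a unique shortest path of length $2$ in each configuration separately. You instead exploit the bijection between $N_2(v)$ and the six pairs of $\binom{N_1(v)}{2}$ (which the paper only uses implicitly inside its Case 1), label each $w_{ij}$ by its predecessors, and apply Lemma \ref{shortest paths lemma} to the non-adjacent pairs $(w_{ab},c)$ and $(w_{ab},d)$: the two $N_2(v)$-neighbors of $w_{ab}$ must each be adjacent to both $c$ and $d$, forcing both to equal $w_{cd}$, a contradiction. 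This is a uniform argument that makes no reference to the isomorphism type of $G[N_2(v)]$, so it is shorter and avoids the case analysis entirely; the paper's route, on the other hand, exposes the near-miss structures ($C_6$ and $2C_3$ on $N_2(v)$) explicitly, which is in the spirit of the structural bookkeeping used throughout Sections 4--6. Both arguments rest on the same tool, Lemma \ref{shortest paths lemma}, and your bijection step is fully justified: each of the six pairs in $N_1(v)$ needs a second common neighbor besides $v$, each of the six vertices of $N_2(v)$ supplies exactly one pair, so the assignment must be onto and hence bijective.
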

\begin{proof}
Let $G$ be a connected $4$-regular graph with diameter $2$. Consider an arbitrary vertex $v$ in $G$ and the distance partition of $V(G)$ from $v$. In order for $G$ to have $q(G)=2$, it must be the case that each $x\in N_2(v)$ has at least two neighbors in $N_1(v)$, otherwise there is a unique path of length 2 between $x$ and $v$. Let $X$ be the set of edges between $N_1(v)$ and $N_2(v)$. Then 
\[
2|N_2(v)|\leq|X|\leq 3|N_1(v)|=12.
\]
So $|N_2(v)|\leq 6$ and $G$ has at most $1+4+6=11$ vertices. This establishes $6\leq |V(G)|\leq 11$. 

We now show the upper bound can be improved to $10$. Consider a $4$-regular graph $G$ with $11$ vertices and diameter $2$. Using the notation above, we see $|N_2(v)|=6$, and $|X|=12$. So every vertex in $N_1(v)$ has exactly three neighbors in $N_2(v)$, and every vertex in $N_2(v)$ has exactly two neighbors in $N_1(v)$. In particular, this means the subgraph $H$ of $G$ induced by $N_2(v)$ is $2$-regular. So we have two cases: either $H$ is a $6$-cycle, or $H$ is the disjoint union of two $3$-cycles.\\

\noindent\emph{Case 1: $H\cong C_6$}
 
Let the vertices of $H$ be $x_1$, $x_2$, $x_3$, $x_4$, $x_5$, $x_6$ in cyclic order. Note that $H$ is bipartite, and in $H$ there is a unique shortest path of length $2$ between any two vertices in the same partite set. Since there can be no unique shortest path of length $2$ in $G$, the edges of $X$ must supply an additional path of length $2$ between every pair of vertices in each partite set. 

Let $N_1(v)=\{v_1,v_2,v_3,v_4\}$. Since $v_ivv_j$ is a path of length $2$, in order for this path not to be unique there must be some $x_k$ that is adjacent to both $v_i$ and $v_j$. In particular, this means we cannot have some $v_i$ whose neighbors are $\{x_1,x_3,x_5\}$ and some $v_j$ whose neighbors are $\{x_2,x_4,x_6\}$. If there is no $v_i$ whose neighbors are $\{x_1,x_3,x_5\}$, then $\{x_1,x_3,x_5\}$ together with the common neighbor of $\{x_1,x_3\}$, the common neighbor of $\{x_3,x_5\}$ and the common neighbor of $\{x_1,x_5\}$ form a $6$-cycle. Without loss of generality, suppose this $6$-cycle is $(v_1,x_1,v_2,x_3,v_3,x_5)$. Then, in order for each pair of vertices in $\{x_2,x_4,x_6\}$ to have a common neighbor, we must have $v_4$ adjacent to each of $\{x_2,x_4,x_6\}$. Thus the edges between $\{v_1,v_2,v_3\}$ and $\{x_2,x_4,x_6\}$ are a perfect matching.

We know $v_1$ is matched to one of $x_2$, $x_4$, or $x_6$. We also know that $v_1$ is already adjacent to $x_1$ and $x_5$. Note that in $H$, $x_1$ is at distance $3$ from $x_4$. So if $v_1$ is matched to $x_4$, then we have a unique shortest path of length $2$, $x_1v_1x_4$. Similarly, $x_5$ is at distance $3$ from $x_2$ in $H$, so if we match $v_1$ to $x_2$, we get another unique shortest path of length $2$. Thus $v_1$ must be matched to $x_6$. A similar argument shows that $v_2$ must be matched to $x_2$, and $v_3$ must be matched to $x_4$. This accounts for all of the edges in $X$. But now we see that $v_1x_1x_2$ is a unique shortest path of length $2$ in $G$.
\\

\noindent\emph{Case 2: $H\cong 2C_3$}

Let the vertices of $H$ be $x_1,x_2,x_3$ and $y_1,y_2,y_3$, where all of the $x_i$'s are adjacent, and all of the $y_i$'s are adjacent. Let $N_1(v)=\{v_1,v_2,v_3,v_4\}$. We know that $x_1$ has $2$ neighbors in $N_1(v)$. Without loss of generality, suppose $x_1$ is adjacent to $v_1$. Then $v_1x_1x_2$ and $v_1x_1x_3$ are paths of length $2$. In order for them not to be unique shortest paths of length $2$, we must have at least one of the edges $v_1x_2$ and $v_1x_3$. Suppose $v_1$ is adjacent to exactly one of $x_2$ and $x_3$. Then $v_1$ is adjacent to exactly one of the $y_i$ vertices, and we have a unique path $v_1y_iy_j$ between $v_1$ and some $y_j$. Thus $v_1$ must be adjacent to both $x_2$ and $x_3$. If $v_2$ is the other neighbor of $x_1$ in $N_1(v)$, then following the same argument as for $v_1$, we must also have edges $v_2x_2$ and $v_2x_3$. This accounts for all edges in $X$ with ends $v_1$ and $v_2$, and all edges in $X$ with ends $x_1$, $x_2$, or $x_3$. Thus the remaining edges in $X$ are all possible edges between $\{v_3,v_4\}$ and $\{y_1,y_2,y_3\}$. But now $v_1vv_3$ is a unique shortest path of length $2$.
\end{proof}

Since there are only $84$ connected $4$-regular graphs with order $6\leq n\leq 10$ \cite{oeis}, we can generate the list of $4$-regular graphs with diameter $2$ for which $q(G)=2$ is not ruled out by Lemma \ref{shortest paths lemma}. We do this by:
\begin{enumerate}[(1)]
\item using nauty's geng function \cite{MR3131381} to generate all connected $4$-regular graphs on $n$ vertices for each $6\leq n\leq 10$;
\item checking the diameter of the graphs generated in (1), and eliminating all with diameter at least 3; then
\item checking the graphs remaining after (2) for any unique shortest paths connecting two vertices at distance $2$ and eliminating those graphs.
\end{enumerate}
At the end of this computation we are left with a set of thirteen $4$-regular graphs of diameter $2$ for which $q(G)=2$ is not ruled out. Figure \ref{RGraphs} gives these thirteen graphs. Lemma \ref{TableLemma} completes the proof of Theorem \ref{Regular4} for graphs with diameter at most $2$.

\begin{figure}[ht!]
\centering
\begin{tikzpicture}[scale=1.4]
	\node (1)  [draw,circle,inner sep=1pt] at (1-3,0) {1};
	\node (2)  [draw,circle,inner sep=1pt] at (0.5-3,0.866) {2};
	\node (3)  [draw,circle,inner sep=1pt] at (-0.5-3,0.866) {3};
	\node (4)  [draw,circle,inner sep=1pt] at (-1-3,0) {4};
	\node (5)  [draw,circle,inner sep=1pt] at (-0.5-3,-0.866) {5};
	\node (6)  [draw,circle,inner sep=1pt] at (0.5-3,-0.866)  {6};
	\node (label1) at (0-3,-1.5) {$R_{6,1}$};
	\draw (1) edge (2);
	\draw (3) edge (2);
	\draw (3) edge (4);
	\draw (5) edge (4);
	\draw (5) edge (6);
	\draw (1) edge (6);
	\draw (3) edge (5);
	\draw (1) edge (5);
	\draw (1) edge (3);
	\draw (4) edge (6);
	\draw (4) edge (2);
	\draw (6) edge (2);
	\node (a1)  [draw,circle,inner sep=1pt] at (1,0) {1};
	\node (a2)  [draw,circle,inner sep=1pt] at (0.623,0.782) {2};
	\node (a3)  [draw,circle,inner sep=1pt] at (-0.223,0.975) {3};
	\node (a4)  [draw,circle,inner sep=1pt] at (-0.901,0.434) {4};
	\node (a5)  [draw,circle,inner sep=1pt] at (-0.901,-0.434) {5};
	\node (a6)  [draw,circle,inner sep=1pt] at (-0.223,-0.975)  {6};
	\node (a7)  [draw,circle,inner sep=1pt] at (0.623,-0.782)  {7};
	\node (label2) at (0,-1.5) {$R_{7,1}$};
	\draw (a1) edge (a2);
	\draw (a2) edge (a3);
	\draw (a3) edge (a4);
	\draw (a4) edge (a5);
	\draw (a5) edge (a6);
	\draw (a6) edge (a7);
	\draw (a7) edge (a1);
	\draw (a1) edge (a3);
	\draw (a2) edge (a4);
	\draw (a3) edge (a5);
	\draw (a4) edge (a6);
	\draw (a5) edge (a7);
	\draw (a6) edge (a1);
	\draw (a7) edge (a2);
	\node (b1)  [draw,circle,inner sep=1pt] at (1+3,0) {7};
	\node (b2)  [draw,circle,inner sep=1pt] at (0.5+3,0.866) {2};
	\node (b3)  [draw,circle,inner sep=1pt] at (-0.5+3,0.866) {1};
	\node (b4)  [draw,circle,inner sep=1pt] at (-1+3,0) {5};
	\node (b5)  [draw,circle,inner sep=1pt] at (-0.5+3,-0.866) {3};
	\node (b6)  [draw,circle,inner sep=1pt] at (0.5+3,-0.866)  {4};
	\node (b7)  [draw,circle,inner sep=1pt] at (0+3,0)  {6};
	\node (label3) at (0+3,-1.5) {$R_{7,2}$};
	\draw (b1) edge (b2);
	\draw (b3) edge (b2);
	\draw (b3) edge (b4);
	\draw (b5) edge (b4);
	\draw (b5) edge (b6);
	\draw (b1) edge (b6);
	\draw (b2) edge (b7);
	\draw (b3) edge (b7);
	\draw (b5) edge (b7);
	\draw (b6) edge (b7);
	\draw (b4) edge (b6);
	\draw (b4) edge (b2);
	\draw (b1) edge (b5);
	\draw (b1) edge (b3);
	\node (c1)  [draw,circle,inner sep=1pt] at (0.283-3,0.283-3) {5};
	\node (c2)  [draw,circle,inner sep=1pt] at (0.707-3,0.707-3) {1};
	\node (c3)  [draw,circle,inner sep=1pt] at (-0.283-3,0.283-3) {6};
	\node (c4)  [draw,circle,inner sep=1pt] at (-0.707-3,0.707-3) {2};
	\node (c5)  [draw,circle,inner sep=1pt] at (-0.283-3,-0.283-3) {7};
	\node (c6)  [draw,circle,inner sep=1pt] at (-0.707-3,-0.707-3)  {3};
	\node (c7)  [draw,circle,inner sep=1pt] at (0.283-3,-0.283-3)  {8};
	\node (c8)  [draw,circle,inner sep=1pt] at (0.707-3,-0.707-3)  {4};
	\node (label4) at (0-3,-1.5-3) {$R_{8,1}$};
	\draw (c1) edge (c3);
	\draw (c5) edge (c3);
	\draw (c5) edge (c7);
	\draw (c1) edge (c7);
	\draw (c2) edge (c4);
	\draw (c6) edge (c4);
	\draw (c6) edge (c8);
	\draw (c2) edge (c8);
	\draw (c1) edge (c4);
	\draw (c1) edge (c8);
	\draw (c3) edge (c2);
	\draw (c3) edge (c6);
	\draw (c5) edge (c4);
	\draw (c5) edge (c8);
	\draw (c7) edge (c2);
	\draw (c7) edge (c6);
	\node (d1)  [draw,circle,inner sep=1pt] at (1,0-3) {1};
	\node (d2)  [draw,circle,inner sep=1pt] at (0.707,0.707-3) {2};
	\node (d3)  [draw,circle,inner sep=1pt] at (0,1-3) {3};
	\node (d4)  [draw,circle,inner sep=1pt] at (-0.707,0.707-3) {4};
	\node (d5)  [draw,circle,inner sep=1pt] at (-1,0-3) {5};
	\node (d6)  [draw,circle,inner sep=1pt] at (-0.707,-0.707-3)  {6};
	\node (d7)  [draw,circle,inner sep=1pt] at (0,-1-3)  {7};
	\node (d8)  [draw,circle,inner sep=1pt] at (0.707,-0.707-3)  {8};
	\node (label5) at (0,-1.5-3) {$R_{8,2}$};
	\draw (d1) edge (d2);
	\draw (d3) edge (d2);
	\draw (d3) edge (d4);
	\draw (d5) edge (d4);
	\draw (d5) edge (d6);
	\draw (d7) edge (d6);
	\draw (d7) edge (d8);
	\draw (d1) edge (d8);
	\draw (d1) edge (d3);
	\draw (d7) edge (d3);
	\draw (d7) edge (d5);
	\draw (d2) edge (d5);
	\draw (d2) edge (d8);
	\draw (d4) edge (d8);
	\draw (d4) edge (d6);
	\draw (d1) edge (d6);
	\node (e1)  [draw,circle,inner sep=1pt] at (0.707+3,0.707-3) {1};
	\node (e2)  [draw,circle,inner sep=1pt] at (-0.707+3,0.707-3) {2};
	\node (e3)  [draw,circle,inner sep=1pt] at (-0.707+3,-0.707-3) {3};
	\node (e4)  [draw,circle,inner sep=1pt] at (0.707+3,-0.707-3) {4};
	\node (e5)  [draw,circle,inner sep=1pt] at (0+3,0.4-3) {5};
	\node (e6)  [draw,circle,inner sep=1pt] at (-0.4+3,0-3)  {6};
	\node (e7)  [draw,circle,inner sep=1pt] at (0+3,-0.4-3)  {7};
	\node (e8)  [draw,circle,inner sep=1pt] at (0.4+3,0-3)  {8};
	\node (label6) at (0+3,-1.5-3) {$R_{8,3}$};
	\draw (e1) edge (e2);
	\draw (e3) edge (e2);
	\draw (e3) edge (e4);
	\draw (e1) edge (e4);
	\draw (e5) edge (e7);
	\draw (e6) edge (e7);
	\draw (e6) edge (e8);
	\draw (e5) edge (e8);
	\draw (e1) edge (e5);
	\draw (e1) edge (e8);
	\draw (e2) edge (e5);
	\draw (e2) edge (e6);
	\draw (e3) edge (e6);
	\draw (e3) edge (e7);
	\draw (e4) edge (e8);
	\draw (e4) edge (e7);
	\node (f1)  [draw,circle,inner sep=1pt] at (1-3,0-6) {1};
	\node (f2)  [draw,circle,inner sep=1pt] at (0.707-3,0.707-6) {2};
	\node (f3)  [draw,circle,inner sep=1pt] at (0-3,1-6) {3};
	\node (f4)  [draw,circle,inner sep=1pt] at (-0.707-3,0.707-6) {4};
	\node (f5)  [draw,circle,inner sep=1pt] at (-1-3,0-6) {5};
	\node (f6)  [draw,circle,inner sep=1pt] at (-0.707-3,-0.707-6)  {6};
	\node (f7)  [draw,circle,inner sep=1pt] at (0-3,-1-6)  {7};
	\node (f8)  [draw,circle,inner sep=1pt] at (0.707-3,-0.707-6)  {8};
	\node (label7) at (0-3,-1.5-6) {$R_{8,4}$};
	\draw (f1) edge (f2);
	\draw (f3) edge (f2);
	\draw (f3) edge (f4);
	\draw (f5) edge (f4);
	\draw (f5) edge (f6);
	\draw (f7) edge (f6);
	\draw (f7) edge (f8);
	\draw (f1) edge (f8);
	\draw (f1) edge (f5);
	\draw (f1) edge (f7);
	\draw (f2) edge (f5);
	\draw (f2) edge (f4);
	\draw (f3) edge (f7);
	\draw (f3) edge (f8);
	\draw (f6) edge (f4);
	\draw (f6) edge (f8);
	\node (g1)  [draw,circle,inner sep=1pt] at (1,0-6) {1};
	\node (g2)  [draw,circle,inner sep=1pt] at (0.707,0.707-6) {2};
	\node (g3)  [draw,circle,inner sep=1pt] at (0,1-6) {3};
	\node (g4)  [draw,circle,inner sep=1pt] at (-0.707,0.707-6) {4};
	\node (g5)  [draw,circle,inner sep=1pt] at (-1,0-6) {5};
	\node (g6)  [draw,circle,inner sep=1pt] at (-0.707,-0.707-6)  {6};
	\node (g7)  [draw,circle,inner sep=1pt] at (0,-1-6)  {7};
	\node (g8)  [draw,circle,inner sep=1pt] at (0.707,-0.707-6)  {8};
	\node (label8) at (0,-1.5-6) {$R_{8,5}$};
	\draw (g1) edge (g2);
	\draw (g3) edge (g2);
	\draw (g3) edge (g4);
	\draw (g5) edge (g4);
	\draw (g5) edge (g6);
	\draw (g7) edge (g6);
	\draw (g7) edge (g8);
	\draw (g1) edge (g8);
	\draw (g1) edge (g4);
	\draw (g1) edge (g3);
	\draw (g4) edge (g2);
	\draw (g5) edge (g8);
	\draw (g6) edge (g8);
	\draw (g5) edge (g7);
	\draw (g3) edge (g6);
	\draw (g7) edge (g2);
	\node (h1)  [draw,circle,inner sep=1pt] at (1+3,0-6) {1};
	\node (h2)  [draw,circle,inner sep=1pt] at (0.707+3,0.707-6) {2};
	\node (h3)  [draw,circle,inner sep=1pt] at (0+3,1-6) {3};
	\node (h4)  [draw,circle,inner sep=1pt] at (-0.707+3,0.707-6) {4};
	\node (h5)  [draw,circle,inner sep=1pt] at (-1+3,0-6) {5};
	\node (h6)  [draw,circle,inner sep=1pt] at (-0.707+3,-0.707-6)  {6};
	\node (h7)  [draw,circle,inner sep=1pt] at (0+3,-1-6)  {7};
	\node (h8)  [draw,circle,inner sep=1pt] at (0.707+3,-0.707-6)  {8};
	\node (label9) at (0+3,-1.5-6) {$R_{8,6}$};
	\draw (h1) edge (h2);
	\draw (h3) edge (h2);
	\draw (h3) edge (h4);
	\draw (h5) edge (h4);
	\draw (h5) edge (h6);
	\draw (h7) edge (h6);
	\draw (h7) edge (h8);
	\draw (h1) edge (h8);
	\draw (h1) edge (h3);
	\draw (h5) edge (h3);
	\draw (h5) edge (h7);
	\draw (h1) edge (h7);
	\draw (h2) edge (h4);
	\draw (h6) edge (h4);
	\draw (h6) edge (h8);
	\draw (h2) edge (h8);
	\node (i1)  [draw,circle,inner sep=1pt] at (1-3,0-9) {1};
	\node (i2)  [draw,circle,inner sep=1pt] at (0.766-3,0.643-9) {2};
	\node (i3)  [draw,circle,inner sep=1pt] at (0.174-3,0.985-9) {3};
	\node (i4)  [draw,circle,inner sep=1pt] at (-0.5-3,0.866-9) {4};
	\node (i5)  [draw,circle,inner sep=1pt] at (-0.94-3,0.342-9) {5};
	\node (i6)  [draw,circle,inner sep=1pt] at (-0.94-3,-0.342-9)  {6};
	\node (i7)  [draw,circle,inner sep=1pt] at (-0.5-3,-0.866-9)  {7};
	\node (i8)  [draw,circle,inner sep=1pt] at (0.174-3,-0.985-9)  {8};
	\node (i9)  [draw,circle,inner sep=1pt] at (0.766-3,-0.643-9)  {9};
	\node (label10) at (0-3,-1.5-9) {$R_{9,1}$};
	\draw (i1) edge (i2);
	\draw (i3) edge (i2);
	\draw (i3) edge (i4);
	\draw (i5) edge (i4);
	\draw (i5) edge (i6);
	\draw (i7) edge (i6);
	\draw (i7) edge (i8);
	\draw (i9) edge (i8);
	\draw (i9) edge (i1);
	\draw (i1) edge (i4);
	\draw (i7) edge (i4);
	\draw (i7) edge (i1);
	\draw (i9) edge (i3);
	\draw (i9) edge (i6);
	\draw (i3) edge (i6);
	\draw (i8) edge (i2);
	\draw (i5) edge (i2);
	\draw (i5) edge (i8);
	\node (j1)  [draw,circle,inner sep=1pt] at (1,0-9) {1};
	\node (j2)  [draw,circle,inner sep=1pt] at (0.766,0.643-9) {2};
	\node (j3)  [draw,circle,inner sep=1pt] at (0.174,0.985-9) {3};
	\node (j4)  [draw,circle,inner sep=1pt] at (-0.5,0.866-9) {4};
	\node (j5)  [draw,circle,inner sep=1pt] at (-0.94,0.342-9) {5};
	\node (j6)  [draw,circle,inner sep=1pt] at (-0.94,-0.342-9)  {6};
	\node (j7)  [draw,circle,inner sep=1pt] at (-0.5,-0.866-9)  {7};
	\node (j8)  [draw,circle,inner sep=1pt] at (0.174,-0.985-9)  {8};
	\node (j9)  [draw,circle,inner sep=1pt] at (0.766,-0.643-9)  {9};
	\node (label11) at (0,-1.5-9) {$R_{9,2}$};
	\draw (j1) edge (j2);
	\draw (j3) edge (j2);
	\draw (j3) edge (j4);
	\draw (j5) edge (j4);
	\draw (j5) edge (j6);
	\draw (j7) edge (j6);
	\draw (j7) edge (j8);
	\draw (j9) edge (j8);
	\draw (j9) edge (j1);
	\draw (j1) edge (j3);
	\draw (j7) edge (j3);
	\draw (j7) edge (j9);
	\draw (j5) edge (j9);
	\draw (j5) edge (j1);
	\draw (j8) edge (j2);
	\draw (j8) edge (j4);
	\draw (j6) edge (j4);
	\draw (j6) edge (j2);
	\node (k1)  [draw,circle,inner sep=1pt] at (1+3,0-9) {1};
	\node (k2)  [draw,circle,inner sep=1pt] at (0.766+3,0.643-9) {2};
	\node (k3)  [draw,circle,inner sep=1pt] at (0.174+3,0.985-9) {3};
	\node (k4)  [draw,circle,inner sep=1pt] at (-0.5+3,0.866-9) {4};
	\node (k5)  [draw,circle,inner sep=1pt] at (-0.94+3,0.342-9) {5};
	\node (k6)  [draw,circle,inner sep=1pt] at (-0.94+3,-0.342-9)  {6};
	\node (k7)  [draw,circle,inner sep=1pt] at (-0.5+3,-0.866-9)  {7};
	\node (k8)  [draw,circle,inner sep=1pt] at (0.174+3,-0.985-9)  {8};
	\node (k9)  [draw,circle,inner sep=1pt] at (0.766+3,-0.643-9)  {9};
	\node (label12) at (0+3,-1.5-9) {$R_{9,3}$};
	\draw (k1) edge (k2);
	\draw (k3) edge (k2);
	\draw (k3) edge (k4);
	\draw (k5) edge (k4);
	\draw (k5) edge (k6);
	\draw (k7) edge (k6);
	\draw (k7) edge (k8);
	\draw (k9) edge (k8);
	\draw (k9) edge (k1);
	\draw (k1) edge (k3);
	\draw (k7) edge (k3);
	\draw (k7) edge (k4);
	\draw (k9) edge (k4);
	\draw (k9) edge (k5);
	\draw (k2) edge (k5);
	\draw (k2) edge (k6);
	\draw (k8) edge (k6);
	\draw (k8) edge (k1);
	\node (l1)  [draw,circle,inner sep=1pt] at (1,0-12) {1};
	\node (l2)  [draw,circle,inner sep=1pt] at (0.309,0.951-12) {2};
	\node (l3)  [draw,circle,inner sep=1pt] at (-0.909,0.851-12) {3};
	\node (l4)  [draw,circle,inner sep=1pt] at (-0.909,-0.851-12) {4};
	\node (l5)  [draw,circle,inner sep=1pt] at (0.309,-0.951-12) {5};
	\node (l6)  [draw,circle,inner sep=1pt] at (0.4,0-12)  {6};
	\node (l7)  [draw,circle,inner sep=1pt] at (0.124,0.38-12)  {7};
	\node (l8)  [draw,circle,inner sep=1pt] at (-0.324,0.235-12)  {8};
	\node (l9)  [draw,circle,inner sep=1pt] at (-0.324,-0.235-12)  {9};
	\node (l10)  [draw,circle,inner sep=1pt] at (0.124,-0.38-12)  {10};
	\node (label13) at (0,-1.5-12) {$R_{10,1}$};
	\draw (l1) edge (l2);
	\draw (l3) edge (l2);
	\draw (l3) edge (l4);
	\draw (l5) edge (l4);
	\draw (l5) edge (l1);
	\draw (l6) edge (l7);
	\draw (l8) edge (l7);
	\draw (l8) edge (l9);
	\draw (l10) edge (l9);
	\draw (l10) edge (l6);
	\draw (l1) edge (l10);
	\draw (l5) edge (l9);
	\draw (l4) edge (l8);
	\draw (l3) edge (l7);
	\draw (l2) edge (l6);
	\draw (l1) edge (l7);
	\draw (l2) edge (l8);
	\draw (l3) edge (l9);
	\draw (l4) edge (l10);
	\draw (l5) edge (l6);
\end{tikzpicture}
\caption{Thirteen $4$-regular graphs with diameter $2$.}\label{RGraphs}
\end{figure}

\begin{lemma}\label{TableLemma}
Table \ref{RGraphTable} lists all thirteen candidate $4$-regular graphs with diameter 2 (shown in Figure \ref{RGraphs}) and includes their $q$-values (or a bound on their $q$-value). 

\begin{table}[!ht]
\begin{center}
\begin{tabular}{r|r}
Graph & $q$-value\\
\hline
$R_{6,1}$ & 2\\
$R_{7,1}$ & 2\\
$R_{7,2}$ & 3\\
$R_{8,1}$ & 2\\
$R_{8,2}$ & 2\\
\end{tabular}
\quad\quad\quad
\begin{tabular}{r|r}
Graph & $q$-value\\
\hline
$R_{8,3}$ & 2\\
$R_{8,4}$ & 2\\
$R_{8,5}$ & 2\\
$R_{8,6}$ & 2\\
 & \\
\end{tabular}
\quad\quad\quad
\begin{tabular}{r|r}
Graph & $q$-value\\
\hline
$R_{9,1}$ & $>2$\\
$R_{9,2}$ & $>2$\\
$R_{9,3}$ & 3\\
$R_{10,1}$ & 2\\
 & \\
\end{tabular}
\caption{The reduced list of thirteen $4$-regular graphs with diameter $2$.}\label{RGraphTable}
\end{center}
\end{table}
\end{lemma}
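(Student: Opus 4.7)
The plan is to verify the thirteen entries of Table~\ref{RGraphTable} one at a time, splitting them into three groups: the nine graphs asserted to satisfy $q(G)=2$, the two graphs ($R_{7,2}$ and $R_{9,3}$) with $q(G)=3$, and the two ($R_{9,1}$, $R_{9,2}$) with only $q(G)>2$ claimed. For the first group I would exhibit an explicit symmetric orthogonal matrix $M\in\mathcal{S}(G)$ in each case. Three entries are already handled because $R_{6,1}\cong H_3$, $R_{8,1}\cong H_4$, and $R_{10,1}\cong H_5$ are closed candles covered by Lemma~\ref{qCandle}, and $R_{8,5}\cong K_4\Box K_2$ is known to admit such a matrix. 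For the remaining five graphs ($R_{7,1}$, $R_{8,2}$, $R_{8,3}$, $R_{8,4}$, $R_{8,6}$), since $R_{7,1}$ and $R_{8,6}$ are circulants I would search for symmetric orthogonal \emph{circulant} matrices compatible with the prescribed connection set (reducing the problem to small character-sum equations), and for the remaining graphs I would write down block or bipartite-type matrices with entries in a small set like $\{0,\pm\tfrac12\}$ or $\{0,\pm\tfrac1{\sqrt{2}}\}$ and directly verify $M=M^T$, $M^2=I$, and the zero pattern.

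For the four graphs with $q(G)>2$, I would assume towards a contradiction a symmetric orthogonal $M\in\mathcal{S}(G)$ and invoke Lemma~\ref{OrthLemma} with a block partition induced by a breadth-first search from a well-chosen vertex $v$: taking $C$ to correspond to $\{v\}\cup N_1(v)$ and $D$ to the remaining vertices, the lemma forces $D$ to be diagonal, the columns of $B$ to be pairwise orthogonal, and each column $b_i$ to be a $(-d_i)$-eigenvector of $C$. Combined with Lemma~\ref{maxindependnet} (to eliminate multiplicity lists exceeding $\alpha(G)$) and the zero-pattern constraints $b_i$ must satisfy (a vertex $u\in N_2(v)$ is adjacent only to its predecessors in $N_1(v)$), one obtains a rigid linear/combinatorial system. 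I would chase this system using the symmetries of each graph: $R_{9,1}$, $R_{9,2}$, $R_{9,3}$ are all vertex-transitive, so by choosing $v$ arbitrarily and then exploiting the stabilizer of $v$ to rescale rows/columns of $B$, the number of free parameters should collapse enough for direct case analysis to yield a contradiction; for $R_{7,2}$ the existence of a vertex of eccentricity $1$ (or near-universal vertex) structurally simplifies the column-orthogonality equations.

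Finally, to complete the equalities $q(R_{7,2})=3$ and $q(R_{9,3})=3$, I would exhibit matrices in $\mathcal{S}(R_{7,2})$ and $\mathcal{S}(R_{9,3})$ with exactly three distinct eigenvalues. For $R_{9,3}$, since it is a circulant on nine vertices, any real symmetric circulant supported on its connection set has two free parameters besides the diagonal; writing its eigenvalues as character sums indexed by $\ZZ/9\ZZ$, I would tune these parameters so the nine resulting values collapse onto three distinct reals. For $R_{7,2}$, whose structure includes a distinguished central vertex, I would try a low-parameter ansatz, for example $\alpha I+\beta A+\gamma A'$ where $A'$ is a second pattern-compatible matrix commuting with the graph automorphism group, and match coefficients by solving a small polynomial system.

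The principal obstacle will be ruling out $q(G)=2$ for $R_{9,1}$ and $R_{9,2}$: these graphs have enough vertices that the predecessor/successor bookkeeping from Lemma~\ref{OrthLemma} carries many case splits on signs of the nonzero entries, and the admissible multiplicity pairs permitted by Lemma~\ref{maxindependnet} are not immediately restrictive, so a careful (and perhaps computer-assisted) enumeration of sign patterns may be required to close the argument. The constructions in the first and third groups I expect to be routine once a suitable ansatz is guessed.
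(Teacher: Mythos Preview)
Your plan for the nine $q=2$ graphs is broadly aligned with the paper, and the closed-candle and $K_4\Box K_2$ reductions are exactly what the paper does. However, there are real problems in the other parts.

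\textbf{The $q>2$ arguments do not get off the ground.} You propose to apply Lemma~\ref{OrthLemma} with $C$ indexed by $\{v\}\cup N_1(v)$ and $D$ by $N_2(v)$, saying ``the lemma forces $D$ to be diagonal''. That is backwards: $D$ diagonal is a \emph{hypothesis} of Lemma~\ref{OrthLemma}, not a conclusion, so your partition is usable only when $N_2(v)$ is independent. For $R_{9,1}$, $R_{9,2}$, and $R_{9,3}$ this never happens: these graphs are vertex-transitive and in each case $N_2(v)$ induces a nontrivial subgraph (a $P_4$, a $C_4$, and a $C_4$, respectively), so Lemma~\ref{OrthLemma} simply does not apply to the BFS block structure. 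The paper avoids this entirely. For $R_{9,1}$ and $R_{9,2}$ it uses a short walk-counting argument: if an edge $\{i,j\}$ lies in no triangle, then $[M^2]_{ij}=x_{ij}(x_{ii}+x_{jj})=0$ forces $x_{ii}=-x_{jj}$; chaining these relations along triangle-free edges pins down enough diagonal entries that a single $[M^2]_{ij}$ on a triangle edge gives a contradiction. No block decomposition, no sign enumeration, and no computer assistance is needed.

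\textbf{The $R_{9,3}$ claim is factually wrong.} You assert $R_{9,3}$ is a circulant on nine vertices and propose tuning two circulant parameters to get three eigenvalues. But $R_{9,3}\cong K_3\Box K_3$, whose automorphism group $(S_3\wr S_2)$ has Sylow $3$-subgroup $\mathbb{Z}_3\times\mathbb{Z}_3$ and hence no element of order $9$; it is not a Cayley graph on $\mathbb{Z}_9$. The paper handles $R_{9,3}$ by proving a standalone result (Lemma~\ref{qKmKn}) that $q(K_m\Box K_n)=3$ for all $m,n\ge 3$, via a block analysis of $C^2=I$ tailored to the product structure.

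\textbf{A smaller caution on $R_{7,1}$.} Your circulant ansatz is natural, but for $C(7,\pm 1,\pm 2)$ the four character sums in the parameters $c_0,c_1,c_2$ are linearly independent enough that forcing them onto two values drives $c_1=c_2=0$. The paper instead exhibits an orthogonal matrix for $R_{7,1}$ with one edge deleted that has the Strong Spectral Property, and perturbs; you should expect to need a genuinely non-circulant witness here. For $R_{7,2}$, the paper uses the bipartite-like partition $\{1,2,3,4\}$ versus the independent set $\{5,6,7\}$ (which \emph{does} satisfy the hypotheses of Lemma~\ref{OrthLemma}) and then pushes the eigenvector constraints to a contradiction; the upper bound $q\le 3$ comes from recognizing $R_{7,2}$ as a joined duplicate of a known $6$-vertex graph, not from an ad hoc ansatz.
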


\begin{proof}
We treat each of the graphs in Table \ref{RGraphTable} separately, in the order they appear in the table.
\\

\noindent $R_{6,1}$: The graph $R_{6,1}$ is a closed candle, $R_{6,1}\cong H_3$. Thus $q(R_{6,1})=2$ by Lemma \ref{qCandle}.
\\

\noindent $R_{7,1}$: The following matrix $M$ is a matrix in $\mathcal{S}(R_{7,1}-\{1,7\})$ (the graph $R_{7,1}$ with the edge $\{1,7\}$ deleted),
\[
M=\frac{1}{6}\left[\begin{array}{rrrrrrr}
3 & \sqrt{6} & -3 & 0 & 0 & 2\sqrt{3} & 0\\
\sqrt{6} & 0 & \sqrt{6} & 2\sqrt{2} & 0 & 0 & -4\\
-3 & \sqrt{6} & -1 & 2\sqrt{3} & 2\sqrt{2} & 0 & 0\\
0 & 2\sqrt{2} & 2\sqrt{3} & -3 & \sqrt{6} & 1 & 0\\
0 & 0 & 2\sqrt{2} & \sqrt{6} & -2 & \sqrt{6} & 2\sqrt{3}\\
2\sqrt{3} & 0 & 0 & 1 & \sqrt{6} & -3 & 2\sqrt{2}\\
0 & -4 & 0 & 0 & 2\sqrt{3} & 2\sqrt{2} & 0\\
\end{array}\right].
\]
The matrix $M$ is orthogonal and has the Strong Spectral Property; see pages 10 and 11 in \cite{MR3665573}. Thus $q(R_{7,1})=2$.
\\

\noindent $R_{7,2}$: Note that $R_{7,2}$ can be constructed from $K_{4,3}$ by adding edges $\{1,2\}$ and $\{3,4\}$. Suppose $M\in\mathcal{S}(R_{7,2})$. We write $M$ as
\[
M=\left[\begin{array}{cc}
C & B\\
B^T & D\end{array}\right]
\]
where $C$ is a $4\times 4$ matrix, $B=[b_1\ b_2\ b_3]$ has no zero entries, and $D=\diag(d_1,d_2,d_3)$. Moreover,
\[
C=\left[\begin{array}{cc}
C_1 & 0\\
0 & C_2\end{array}\right]
\]
where each $C_i\in\mathcal{S}(K_2)$.

Assume $M$ is an orthogonal matrix. 
\begin{comment}
    
Then the columns of $M$ are pairwise orthogonal. Since $D$ is diagonal, this implies that the columns of $B$ are orthogonal. Expanding $M^2=I$ we have

\[
M^2=\left[\begin{array}{rr}
C^2+BB^T & CB+BD\\
B^TC+DB^T & B^TB+D^2\end{array}\right]=\left[\begin{array}{rr}
I_4 & 0\\
0 & I_3\end{array}\right].
\]
%
From the $(1,2)$-block we see that $CB=-BD$, or
%
\[
[Cb_1\ Cb_2\ Cb_3]=[-d_1b_1\ -d_2b_2\ -d_3b_3].
\]
%
Since the entries of $B$ are all nonzero, this implies that each $b_i$ is a $(-d_i)$-eigenvector for $C$ (as in Lemma \ref{OrthLemma}). 

\end{comment}
Using Lemma \ref{OrthLemma}, the columns of $B$ are pairwise orthogonal and $Cb_i=-d_ib_i$ for $i=1,2,3.$
Partition each $b_i$ into vectors $x_i,y_i\in\mathbb{R}^2$. Now
\[
\left[\begin{array}{r}
C_1x_i\\
C_2y_i\end{array}\right] = \left[\begin{array}{cc}
C_1 & 0\\
0 & C_2\end{array}\right]\left[\begin{array}{r}
x_i\\
y_i\end{array}\right] = Cb_i =\left[\begin{array}{r}
-d_ix_i\\
-d_iy_i\end{array}\right],
\]
so each $x_i$ is a $(-d_i)$-eigenvector for $C_1$, and each $y_i$ is a $(-d_i)$-eigenvector for $C_2$.

The matrices $C_1$ and $C_2$ are each $2\times 2$ non-scalar symmetric matrices, so each has two distinct eigenvalues. Thus, $d_1,d_2,d_3$ cannot be all distinct. Without loss of generality, suppose $d_2=d_3$. And since the dimension of the $(-d_2)$-eigenspace of $C_i$ is $1$, the $(-d_2)$-eigenvectors of $C_i$ are scalar multiples of each other. That is, there exist $\alpha,\beta\neq 0$ so that $x_3=\alpha x_2$ and $y_3=\beta y_2$.

Now we consider the $(2,2)$-block of $M^2$. We have
\[
\begin{array}{rcl}
I_3-D^2 &=& B^TB\\
&=&\left[\begin{array}{rr}
x_1^T & y_1^T\\
x_2^T & y_2^T\\
\alpha x_2^T & \beta y_2^T\end{array}\right]\left[\begin{array}{rrr}
x_1 & x_2 & \alpha x_2\\
y_1 & y_2 & \beta y_2\end{array}\right]\\
\\
& =& \left[\begin{array}{rrr}
x_1^Tx_1+y_1^Ty_1 & 0 & 0\\
0 & x_2^Tx_2+y_2^Ty_2 & 0\\
0 & 0 & \alpha^2x_2^Tx_2+\beta^2y_2^Ty_2
\end{array}\right].
\end{array}
\]
From
\[
x_2^Tx_2+y_2^Ty_2 = 1-d_2^2 =  \alpha^2x_2^Tx_2+\beta^2y_2^Ty_2
\]
we conclude
\begin{equation}\label{eq:1}
 (\alpha^2-1)x_2^Tx_2+(\beta^2-1)y_2^Ty_2=0.
\end{equation}
Since the second and third columns of $B$ are orthogonal we also have
\begin{equation*}\label{eq:2}
\alpha x_2^Tx_2+\beta y_2^Ty_2=0,
\end{equation*}
thus 
\begin{equation}\label{eq:3}
x_2^Tx_2=-\displaystyle \frac{\beta}{\alpha} y_2^Ty_2. 
\end{equation}
Substituting \eqref{eq:3} into \eqref{eq:1} we obtain 
\begin{eqnarray*}
 (\alpha^2-1)\left(-\displaystyle \frac{\beta}{\alpha} y_2^Ty_2\right)+(\beta^2-1)y_2^Ty_2=0 &\Rightarrow& -\alpha^2\beta+\beta+\alpha\beta^2 -\alpha=0\\
 &\Rightarrow&(\beta-\alpha)(\alpha\beta+1)=0 \\
 &\Rightarrow& \beta=\alpha\quad \text{or}\quad \alpha\beta=-1. 
\end{eqnarray*}
Since the columns of $B$ are orthogonal, $\alpha\neq \beta$. We show that $\alpha\beta=-1$ leads to a contradiction which in turn implies that no orthogonal $M\in\mathcal{S}(R_{7,2})$ exists. Hence, $q(R_{7,2})>2$. Suppose $\alpha\beta=-1$; then 
\[\begin{array}{rcl}
I_4 - \left [
\begin{array}{cc}
C_1^2&0 \\
0&C_2^2\end{array} \right ] &=& I_4 - C^2 \ = \
BB^T \ = \ \left[\begin{array}{rrr}
x_1 & x_2 & \alpha x_2\\
y_1 & y_2 & \beta y_2\end{array}\right] \left[\begin{array}{rr}
x_1^T & y_1^T\\
x_2^T & y_2^T\\
\alpha x_2^T & \beta y_2^T\end{array}\right]\\
\\
& =& \left[\begin{array}{rrr}
x_1x_1^T+(1+\alpha^2) x_2x_2^T   & 
x_1y_1^T+(1+\alpha\beta) x_2y_2^T\\
y_1x_1^T+(1+\alpha\beta) y_2x_2^T & y_1y_1^T+(1+\beta^2) y_2y_2^T
\end{array}\right] \\
\\
& =& \left[\begin{array}{rrr}
x_1x_1^T+(1+\alpha^2) x_2x_2^T   & 
x_1y_1^T\\
y_1x_1^T & y_1y_1^T+(1+\beta^2) y_2y_2^T
\end{array}\right].
\end{array}
\]
So $x_1y_1^T=0$. This implies that either $x_1=0$ or $y_1=0$ which is a contradiction since the entries of $B$ are nonzero. 

To show $q(R_{7,2})=3$, we see that $R_{7,2}$ results from joined duplication of a vertex of $G189$ in \cite{Butler} or \cite{MR3904092} (i.e., the graph obtained from $R_{7,2}$ by contracting edge $\{3,4\}$ is isomorphic to $G189$). From Table 3 in \cite{MR3904092} we find that $q(G189)=3$. Thus $q(R_{7,2})=3$.
\\

\noindent $R_{8,1}$: The graph $R_{8,1}$ is a closed candle, $R_{8,1}\cong H_4$. Thus $q(R_{8,1})=2$ by Lemma \ref{qCandle}.
\\

\noindent $R_{8,2}$: The following matrix $M_{8,2}$ is a matrix in $\mathcal{S}(R_{8,2})$,
\[
M_{8,2} = \frac{1}{\sqrt{5}}\left[\begin{array}{rrrrrrrr}
1 & 1 & 1 & 0 & 0 & 1 & 0 & -1 \\
1 & 1 & -1 & 0 & -1 & 0 & 0 & 1 \\
1 & -1 & 0 & \beta & 0 & 0 & \alpha & 0 \\
 0 & 0 & \beta & 0 & 1 & 1 & 0 & \alpha\\
0 &  -1 & 0 &  1 & -1 &  1 &  -1 & 0 \\
1 & 0 & 0 & 1 & 1 & -1 & -1 & 0 \\
0 & 0 & \alpha & 0 & -1 & -1 & 0 & \beta \\
-1 & 1 & 0 & \alpha & 0 & 0 & \beta & 0
\end{array}\right],
\]
where $\alpha=(\sqrt{5}+1)/2$ and $\beta=(\sqrt{5}-1)/2$. Since $M_{8,2}$ is orthogonal, $q(R_{8,2})=2$.
\\

\noindent $R_{8,3}$: The following matrix $M_{8,3}$ is a matrix in $\mathcal{S}(R_{8,3})$,
\[
M_{8,3} =\frac{1}{\sqrt{12}}\left[\begin{array}{rrrrrrrr}
2 & \sqrt{2} & 0 & -\sqrt{2} & \sqrt{2} & 0 & 0 & \sqrt{2} \\
\sqrt{2} & 0 & \sqrt{2} & 0 & -2 & -2 & 0 & 0 \\
0 & \sqrt{2} & -2 & \sqrt{2} & 0 & -\sqrt{2} & -\sqrt{2} & 0 \\
-\sqrt{2} & 0 & \sqrt{2} & 0 & 0 & 0 & -2 & 2 \\
 \sqrt{2} & -2 & 0 & 0 & 1 & 0 & -2 & -1\\
0 & -2 & -\sqrt{2} & 0 & 0 & -1 & 1 & 2 \\
0 & 0 & -\sqrt{2} & -2 & -2 & 1 & -1 & 0\\
\sqrt{2} & 0 & 0 & 2 & -1 & 2 & 0 & 1 
\end{array}\right].
\]
The matrix $M_{8,3}$ is orthogonal, so $q(R_{8,3})=2$.
\\

\noindent $R_{8,4}$: Note that in our labelling of $R_{8,4}$, $N_1(7)=\{1,3,6,8\}$ and $N_1(8)=\{1,3,6,7\}$. Let $G$ be the graph obtained from $R_{8,4}$ by contracting the edge $\{7,8\}$ to the vertex $(78)$ (and replacing every pair of multiple edges by a single edge). The following matrix $M$ is a matrix in $\mathcal{S}(G)$,
\[
M = \frac{1}{3}\left[\begin{array}{rrrrrrr}
0 & -\sqrt{3} & 0 & 0 & \sqrt{3} & 0 & \sqrt{3}\\
-\sqrt{3} & 1 & -\sqrt{3} & 1 & 1 & 0 & 0\\
0 & -\sqrt{3} & 0 & \sqrt{3} & 0 & 0 & -\sqrt{3}\\
0 & 1 & \sqrt{3} & 1 & 1 & \sqrt{3} & 0\\
\sqrt{3} & 1 & 0 & 1 & 1 & -\sqrt{3} & 0\\
0 & 0 & 0 & \sqrt{3} & -\sqrt{3} & 0 & \sqrt{3}\\
\sqrt{3} & 0 & -\sqrt{3} & 0 & 0 & \sqrt{3} & 0
\end{array}\right]
\]
(here the vertices are ordered as $\{1,2,3,4,5,6,(78)\}$). The matrix $M$ is orthogonal, so $q(G)=2$. Since $R_{8,4}$ is obtained from $G$ by joined duplication of $(78)$, $q(R_{8,4})=2$.
\\

\noindent $R_{8,5}$: Note that $R_{8,5}\cong K_4\Box K_2$. Thus by Corollary 6.8 in \cite{MR3118943}, we have $q(R_{8,5})=2$.
\\

\noindent $R_{8,6}$: The following matrix $M_{8,6}$ is a matrix in $\mathcal{S}(R_{8,6})$,
\[
M_{8,6}=\frac{1}{\sqrt{10}}\left[\begin{array}{rrrrrrrr}
-\sqrt{2} & \sqrt{2} & -1 & 0 & 0 & 0 & -1 & -2 \\
\sqrt{2} & \sqrt{2} & -2 & 1 & 0 & 0 & 0 & 1 \\
-1 & -2 & -\sqrt{2} & \sqrt{2} & 1 & 0 & 0 & 0 \\
0 & 1 & \sqrt{2} & \sqrt{2} & 2 & -1 & 0 & 0 \\
0 & 0 & 1 & 2 & -\sqrt{2} & \sqrt{2} & -1 & 0 \\
0 & 0 & 0 & -1 & \sqrt{2} & \sqrt{2} & -2 & 1 \\
-1 & 0 & 0 & 0 & -1 & -2 & -\sqrt{2} & \sqrt{2} \\
-2 & 1 & 0 & 0 & 0 & 1 & \sqrt{2} & \sqrt{2}
\end{array}\right].
\]
The matrix $M_{8,6}$ is orthogonal, so $q(R_{8,6})=2$.
\\

\noindent $R_{9,1}$: Consider a matrix $M\in\mathcal{S}(R_{9,1})$ where we use variables for each edge and vertex. That is,
\[
[M]_{ij}=\begin{cases}
x_{ij} & \text{if $ij\in E(R_{9,1})$,}\\
x_{ii} & \text{if $i=j$, and}\\
0 & \text{if $ij\notin E(R_{9,1})$.}
\end{cases}
\]

Suppose $M$ is an orthogonal matrix. Note that the edges of $R_{9,1}$ can be partitioned into the $9$-cycle $(1,2,3,4,5,6,7,8,9,1)$, and the $3$-cycles $(1,4,7,1)$, $(2,5,8,2)$, and $(3,6,9,3)$. Since the edge $\{1,2\}$ does not lie in any triangle in $R_{9,1}$, we have
\[
0 = [M^2]_{12}=x_{11}x_{12}+x_{12}x_{22}.
\]
Since $x_{12}\neq 0$, we conclude that $x_{22}=-x_{11}$. Repeating this argument for each edge of the $9$-cycle, we see that $x_{ii}=-x_{ii}$, or $x_{ii}=0$ for all $1\leq i\leq 9$.

Now consider the $3$-cycle $(1,4,7,1)$. Taking account of the walks of length $2$ between $1$ and $7$, we have
\[
0 = [M^2]_{17}=x_{11}x_{17}+x_{17}x_{77}+x_{14}x_{47}=x_{14}x_{47}.
\]
But since $x_{14},x_{47}\neq 0$, this is impossible. Thus there is no orthogonal matrix $M\in\mathcal{S}(R_{9,1})$, and we conclude $q(R_{9,1})>2$.
\\

\noindent $R_{9,2}$: Consider a matrix $M\in\mathcal{S}(R_{9,2})$ where we use variables for each edge and vertex. That is,
\[
[M]_{ij}=\begin{cases}
x_{ij} & \text{if $ij\in E(R_{9,2})$,}\\
x_{ii} & \text{if $i=j$, and}\\
0 & \text{if $ij\notin E(R_{9,2})$.}
\end{cases}
\]

Suppose $M$ is an orthogonal matrix. Note that $\{6,7\}\in E(R_{9,2})$, but $\{6,7\}$ is not included in any $3$-cycle in $R_{9,2}$. Thus
\[
0 = [M^2]_{67}=x_{66}x_{67}+x_{67}x_{77}.
\]
Since the variable $x_{67}\neq 0$, this implies that $x_{66}=-x_{77}$. Similarly, we see that edges $\{2,6\}$ and $\{3,7\}$ are not included in any $3$-cycles in $R_{9,2}$. Considering $[M^2]_{26}$ and $[M^2]_{37}$ we derive $x_{22}=-x_{66}$ and $x_{33}=-x_{77}$. Combining these three equations, we have $x_{22}=-x_{33}$. Now consider the paths of length $2$ between vertices $2$ and $3$. We have
\[
0 = [M^2]_{23}=x_{22}x_{23}+x_{23}x_{33}+x_{12}x_{13}=x_{12}x_{13}.
\]
But since $x_{12},x_{13}\neq 0$, this is impossible. Thus there is no orthogonal matrix $M\in\mathcal{S}(R_{9,2})$, and we conclude $q(R_{9,2})>2$.
\\

\noindent $R_{9,3}$: Note that $\{1,2,3\}$, $\{4,5,9\}$, and $\{6,7,8\}$ all induce $K_3$ subgraphs in $R_{9,3}$. Moreover, $\{1,8,9\}$, $\{2,5,6\}$, and $\{3,4,7\}$ also induce $K_3$ subgraphs in $R_{9,3}$, and we see that $R_{9,3}\cong K_3\Box K_3$. We prove in Lemma \ref{qKmKn} that $q(K_m\Box K_n)=3$ for all $m,n\geq 3$. This establishes $q(R_{9,3})=3$.
\\

\noindent $R_{10,1}$: The graph $R_{10,1}$ is a closed candle, $R_{10,1}\cong H_5$. Thus $q(R_{10,1})=2$ by Lemma \ref{qCandle}. 
\end{proof}

In order to complete the preceding proof, we establish the following lemma, which shows that the bound in Proposition 3.1 of \cite{MR3904092} is sharp for complete graphs.
\begin{lemma}\label{qKmKn}
For $m,n\geq 3$, we have $q(K_m\Box K_n)=3.$
\end{lemma}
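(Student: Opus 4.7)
I will show $q(K_m \Box K_n) \le 3$ by explicit construction and $q(K_m \Box K_n) \ge 3$ by ruling out any orthogonal matrix in $\mathcal{S}(K_m \Box K_n)$ when $m,n\ge 3$.

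For the upper bound, consider
\[
M \;=\; (J_m - I_m) \otimes I_n \;+\; \tfrac{m}{n}\, I_m \otimes (J_n - I_n).
\]
The first summand supplies the ``column'' edges of $K_m \Box K_n$, the second supplies the ``row'' edges, and both terms have zero diagonal, so $M \in \mathcal{S}(K_m \Box K_n)$. Its eigenvalues are the sums $\lambda + \mu$ with $\lambda \in \{m-1,\,-1\}$ and $\mu \in \{m(n-1)/n,\,-m/n\}$, and the scaling $m/n$ is chosen precisely so that $(m-1) + (-m/n) = (-1) + m(n-1)/n$, collapsing the four Kronecker values into three distinct ones.

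For the lower bound, suppose for contradiction that an orthogonal $M \in \mathcal{S}(K_m \Box K_n)$ exists. Index vertices by pairs $(i,k)$ with $i \in [m]$ and $k \in [n]$ and partition $M$ into $n \times n$ blocks according to the first coordinate. Each diagonal block $B_i := M_{ii}$ lies in $\mathcal{S}(K_n)$, while each off-diagonal block $M_{ij}$ with $i \ne j$ records only the column edges $\{(i,k),(j,k)\}$, and is therefore a diagonal matrix $D_{ij} = \diag(d_{ij}^{1},\ldots,d_{ij}^{n})$ with every $d_{ij}^{k}$ nonzero. In particular, the column of $M$ indexed by the grid vertex $(j,l)$ is supported exactly on row $j$ together with column $l$ of the grid.

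The key step is the inner product of two such columns indexed by $(j,l)$ and $(j',l')$ with $j \ne j'$ and $l \ne l'$: their supports meet only at $(j,l')$ and $(j',l)$, so orthogonality gives
\[
(B_j)_{l,l'}\, d_{jj'}^{l'} \;+\; d_{jj'}^{l}\, (B_{j'})_{l,l'} \;=\; 0.
\]
Since every $(B_j)_{l,l'}$ is nonzero, this rearranges to $(B_j)_{l,l'}/(B_{j'})_{l,l'} = -d_{jj'}^{l}/d_{jj'}^{l'}$. For three distinct rows $j_1,j_2,j_3 \in [m]$ (which exist as $m \ge 3$), the telescoping identity
\[
\frac{(B_{j_1})_{l,l'}}{(B_{j_2})_{l,l'}} \cdot \frac{(B_{j_2})_{l,l'}}{(B_{j_3})_{l,l'}} \;=\; \frac{(B_{j_1})_{l,l'}}{(B_{j_3})_{l,l'}}
\]
transfers to the right-hand sides and, after a short manipulation, shows that
\[
h(l) \;:=\; \frac{d_{j_1 j_2}^{l}\, d_{j_2 j_3}^{l}}{d_{j_1 j_3}^{l}}
\]
satisfies $h(l) = -h(l')$ for every pair $l \ne l'$. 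Because $n \ge 3$, picking three distinct $l_1,l_2,l_3$ and chaining the pairwise relations forces $h(l_1) = -h(l_1) = 0$, contradicting that $h(l_1)$ is a nonzero ratio of nonzero entries. The main obstacle is the sign bookkeeping: one must verify that multiplying three equalities of the form $\rho = -\sigma$ around a $3$-cycle yields exactly $h(l) = -h(l')$ (so that three distinct columns produce a contradiction), rather than $h(l) = h(l')$ (which would impose no contradiction). Otherwise the argument is mechanical, and combining both bounds gives $q(K_m \Box K_n) = 3$.
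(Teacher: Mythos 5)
Your proof is correct and follows essentially the same route as the paper's: both establish the upper bound via a Kronecker-sum construction (yours made explicit with the scaling $m/n$, the paper's cited as ``a basic application of Kronecker products''), and both obtain the lower bound by partitioning a hypothetical orthogonal matrix into $K_n$-blocks with nonzero diagonal off-diagonal blocks and deriving a sign obstruction of the form $X=-X$ by combining ratio relations around a triangle in each factor. Your entrywise formulation via orthogonality of two columns indexed by $(j,l)$ and $(j',l')$ is a cleaner packaging of the paper's manipulation of the block equations $[C^2]_{ij}=0$, and the sign bookkeeping you flag as the main obstacle does check out: the two minus signs on the left of the telescoped identity cancel while the single one on the right survives, yielding $h(l)=-h(l')$ and hence the contradiction.
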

\begin{proof}
Since $q(K_s)=2$ for any $s\geq 2$ it follows from a basic application of Kronecker products that $q(K_m\Box K_n)\leq 3$ (this inequality can also be deduced from Proposition 3.1 in \cite{MR3904092}).
%we have $q(G_1\Box G_2)\leq q(G_1)+q(G_2)-1$ for graphs $G_1$ and $G_2.$ Since $q(K_m)=2$ for all $m\geq 2$, we have $q(K_m\Box K_n)\leq 3$. 
It remains to verify that in fact $q(K_m\Box K_n)\geq 3$.

If $q(K_m\Box K_n)=2,$ then there exists a matrix $C\in \mathcal{S}(K_m\Box K_n)$ that satisfies $C^2=I$ and is given by
\[
C=\left[\begin{array}{ccccc}
A_{11} & D_{12} & D_{13} & \ldots & D_{1m}\\
D_{12} & A_{22} & D_{23} & \ldots & D_{2m}\\
&& \ddots\\
D_{1m} & D_{2m} & D_{3m} & \ldots & A_{mm}\\
\end{array}\right],
\]
where $D_{ij}$ is an $n\times n$ diagonal matrix with nonzero diagonal entries for each $1\leq i,j\leq m$ and $A_{ii}\in \mathcal{S}(K_n)$ for $1\leq i\leq m$.

Let $[C^2]_{ij}$ denote the $(i,j)$ block of the matrix $C^2$ partitioned conformally with $C$ above. Then
\begin{align*}
[C^2]_{12}&=A_{11}D_{12} +D_{12}A_{22}+\sum_{j=3}^{m}{D_{1j}}D_{{2j}}=0,\\
[C^2]_{13}&=A_{11}D_{13} +D_{13}A_{33}+\sum_{j\neq 1, 3}{D_{1j}}D_{{3j}}=0,\\
[C^2]_{23}&=A_{22}D_{23} +D_{23}A_{33}+\sum_{j\neq 2,3}{D_{2j}}D_{{3j}}=0.
\end{align*}

Since $D_{ij}$ is invertible, we have 

\begin{align}
  \label{eq0}  A_{22} &=-D_{12}^{-1}A_{11}D_{12}-D_{12}^{-1}\left(\sum_{j=3}^{m}{D_{1j}}D_{{2j}}\right),\\ \label{eq1}
    A_{33}&= -D_{13}^{-1}A_{11}D_{13}-D_{13}^{-1}\left(\sum_{j\neq 1, 3}{D_{1j}}D_{{3j}}\right),\\\label{eq2}
    A_{22}&=-D_{23}A_{33}D_{23}^{-1} -\left(\sum_{j\neq 2,3}{D_{2j}}D_{{3j}}\right)D_{23}^{-1}.
\end{align}

From \eqref{eq1} and \eqref{eq2} we have

%\begin{eqnarray*}
% A_{22}=-D_{23}\left(-D_{13}^{-1}A_{11}D_{13}-D_{13}^{-1}(\sum_{j\neq 1, 3}{D_{1j}}D_{{3j}})\right)D_{23}^{-1} -(\sum_{j\neq 2,3}{D_{2j}}D_{{3j}})D_{23}^{-1}. 
%\end{eqnarray*}
%So
 \begin{equation}
 \label{eq3}
A_{22}=D_{23}D_{13}^{-1}A_{11}D_{13}D_{23}^{-1}+
D_{23}D_{13}^{-1}\left(\sum_{j\neq 1, 3}{D_{1j}}D_{{3j}}\right)D_{23}^{-1} -\left(\sum_{j\neq 2,3}{D_{2j}}D_{{3j}}\right)D_{23}^{-1}.
\end{equation}
Note that $[D_{12}]_{ij}=c_{i(n+j)}$, $[D_{13}]_{ij}=c_{i(2n+j)},$ and $[D_{23}]_{ij}=c_{(n+i)(2n+j)}$, and, by assumption, each such entry is nonzero when $i=j$. By direct calculation we have
\[
[D_{23}D_{13}^{-1}A_{11}D_{13}D_{23}^{-1}]_{ij}=\displaystyle \frac{c_{ij}c_{(n+i)(2n+i)}c_{j(2n+j)}}{c_{i(2n+i)}c_{(n+j)(2n+j)}}
\]
and
\[
[D_{12}^{-1}A_{11}D_{12}]_{ij}=\displaystyle \frac{c_{ij}c_{j(n+j)}}{c_{i(n+i)}}.
\]

Using \eqref{eq0} and \eqref{eq3}, and letting $(i,j)=(1,2), (1,3),$ and $(2,3)$ (there are no contributions from the diagonal terms in (\ref{eq0}) nor (\ref{eq3}))  we have 

\begin{align}
 \label{eq4} \displaystyle \frac{c_{12}c_{(n+1)(2n+1)}c_{2(2n+2)}}{c_{1(2n+1)}c_{(n+2)(2n+2)}} & =  -\displaystyle  \frac{c_{12}c_{2(n+2)}}{c_{1(n+1)}} \\
\label{eq5} \displaystyle \frac{c_{13}c_{(n+1)(2n+1)}c_{3(2n+3)}}{c_{1(2n+1)}c_{(n+3)(2n+3)}} & =  -\displaystyle \frac{c_{13}c_{3(n+3)}}{c_{1(n+1)}}\\
 \label{eq6} \displaystyle \frac{c_{23}c_{(n+2)(2n+2)}c_{3(2n+3)}}{c_{2(2n+2)}c_{(n+3)(2n+3)}} & =  -\displaystyle \frac{c_{23}c_{3(n+3)}}{c_{2(n+2)}}.
\end{align}

Manipulating equations \eqref{eq4} and \eqref{eq5} produces the equation
%. Dividing both sides by $m_{12}$ and $m_{13}$, respectively, we have
%
%\begin{eqnarray}
% \label{eq7} \displaystyle \frac{m_{(n+1)(2n+1)}m_{2(2n+2)}}{m_{1(2n+1)}m_{(n+2)(2n+2)}} & = & -\displaystyle  \frac{m_{2(n+2)}}{m_{1(n+1)}} \\
%\label{eq8} \displaystyle \frac{m_{(n+1)(2n+1)}m_{3(2n+3)}}{m_{1(2n+1)}m_{(n+3)(2n+3)}} & = & -\displaystyle \frac{m_{3(n+3)}}{m_{1(n+1)}}.
%\end{eqnarray}
%
%Dividing Equation \eqref{eq7} by \eqref{eq8} we have 
%\begin{eqnarray*}
% \displaystyle \frac{\frac{m_{(n+1)(2n+1)}m_{2(2n+2)}}{m_{1(2n+1)}m_{(n+2)(2n+2)}}}{ \frac{m_{(n+1)(2n+1)}m_{3(2n+3)}}{m_{1(2n+1)}m_{(n+3)(2n+3)}}} 
% & = & \displaystyle  \frac{\frac{m_{2(n+2)}}{m_{1(n+1)}}}{\frac{m_{3(n+3)}}{m_{1(n+1)}}}.
%\end{eqnarray*}
%
%So 
%
%\begin{eqnarray*}
% \displaystyle \frac{m_{(n+1)(2n+1)}m_{2(2n+2)}m_{1(2n+1)}m_{(n+3)(2n+3)}}{m_{1(2n+1)}m_{(n+2)(2n+2)}m_{(n+1)(2n+1)}m_{3(2n+3)}}
% & = & \displaystyle  \frac{m_{2(n+2)}m_{1(n+1)}}{m_{1(n+1)}m_{3(n+3)}}.
%\end{eqnarray*}
%
%By observation we can cancel $m_{1(n+1)}$ from the right hand side  and we can cancel $m_{(n+1)(2n+1)}$ as well as $m_{1(2n+1)}$ from the left hand side of the equation above. 

\begin{eqnarray*}
 \displaystyle \frac{c_{2(2n+2)}c_{(n+3)(2n+3)}}{c_{(n+2)(2n+2)}c_{3(2n+3)}}
 & = & \displaystyle  \frac{c_{2(n+2)}}{c_{3(n+3)}}
\end{eqnarray*}
or
\begin{eqnarray*}
 c_{3(n+3)}c_{2(2n+2)}c_{(n+3)(2n+3)}
 & = & c_{(n+2)(2n+2)}c_{3(2n+3)}c_{2(n+2)}.
\end{eqnarray*}
However from \eqref{eq6} we have 
\begin{eqnarray*}
c_{(n+2)(2n+2)}c_{3(2n+3)}c_{2(n+2)}=-c_{2(2n+2)}c_{(n+3)(2n+3)}c_{3(n+3)}.
\end{eqnarray*}
which is a contradiction.
\end{proof}
%\textcolor{red}{This proof still needs to be checked for correctness, and checked to make sure it implies the result for $K_m\Box K_n$.}

This completes the verification of the $q$-values listed in Table \ref{RGraphTable}, which completes the proof of Theorem \ref{Regular4} for graphs with diameter at most $2$.

%%%%%%%%%%%%%%%%%%%%%%%%%%%%%%%%%%%%%%%%%%%%%%%%%%%%%%%%%%%%%
\section{Sporadic 4-Regular Graphs and Theorem \ref{Regular4}}\label{SporadicSection}
%%%%%%%%%%%%%%%%%%%%%%%%%%%%%%%%%%%%%%%%%%%%%%%%%%%%%%%%%%%%%

In this section we establish whether or not $q(G)=2$ for a collection of sporadic graphs. These graphs are all connected with diameter at least $3$ and arise within the proofs in Section \ref{LargeDiameter}.

\begin{lemma}\label{SporadicGraphs}
Table \ref{SGraphTable} lists candidate $4$-regular graphs with diameter at least 3 (also shown in Figure \ref{RGraphs2}) and includes their $q$-values (or a bound on their $q$-value). 

\begin{table}[!ht]
\begin{center}
\begin{tabular}{r|r}
Graph & $q$-value\\
\hline
$R_{10,2}$ & 2\\
$R_{10,3}$ & 2\\
$R_{10,4}$ & 2\\
$R_{12,1}$ & $>2$\\
\end{tabular}
\quad\quad\quad
\begin{tabular}{r|r}
Graph & $q$-value\\
\hline
$R_{12,2}$ & $>2$\\
$R_{12,3}$ & 2\\
$R_{14,1}$ & 2\\
 & \\
\end{tabular}
\caption{Sporadic $4$-Regular Graphs with diameter at least 3.}\label{SGraphTable}
\end{center}
\end{table}
\end{lemma}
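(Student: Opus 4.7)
The plan is to treat each of the seven graphs individually, grouping them by whether we must exhibit an orthogonal matrix realizing $q(G)=2$ or whether we must rule out such a matrix.

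For the five graphs with $q(G)=2$ (namely $R_{10,2}$, $R_{10,3}$, $R_{10,4}$, $R_{12,3}$, and $R_{14,1}$), the strategy is to present explicit orthogonal matrices in $\mathcal{S}(G)$ for each, following the template used throughout the proof of Lemma \ref{TableLemma}; in each case, one simply verifies that $M^T M = I$ and that the zero/nonzero pattern of $M$ matches the adjacency pattern of $G$. For $R_{10,3}$, since the graph is obtained from $Q_3$ by joined duplicating a pair of antipodal vertices, we can avoid writing a matrix by hand: since $q(Q_3)=2$ by Corollary \ref{Regular123}, the joined duplication inequality $q(\jdup(G,v)) \le q(G)$ (cited in the preliminaries from \cite{MR3891770}) yields $q(R_{10,3}) \le 2$, and since $R_{10,3}$ is not complete multipartite the bound is tight. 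For $R_{14,1}$, which is the point-block incidence graph of the symmetric $2$-$(7,4,2)$ design, the graph is bipartite with equal parts; thus by the discussion following Lemma \ref{OrthLemma}, it suffices to exhibit a $7\times 7$ orthogonal matrix $B$ whose zero pattern matches the bipartite biadjacency structure. Such a $B$ can be produced by scaling the $\pm 1$ incidence matrix of the design appropriately. For $R_{10,4} \cong C(10,\pm 1, \pm 3)$ and $R_{12,3} \cong C(12, \pm 1, \pm 3)$, the circulant structure makes it natural to look for circulant (or block-circulant) orthogonal matrices supported on the right pattern. The matrix for $R_{10,2}$ will have to be found by direct construction.

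For the two graphs with $q(G)>2$ (namely $R_{12,1}$ and $R_{12,2}$), the strategy mimics the arguments used for $R_{9,1}$ and $R_{9,2}$ in Lemma \ref{TableLemma}. Parameterize a hypothetical orthogonal $M \in \mathcal{S}(G)$ by variables $x_{ij}$ on edges and $x_{ii}$ on diagonal entries. The two workhorses are:
\begin{enumerate}[(i)]
\item If $\{i,j\} \in E(G)$ lies in no triangle of $G$, then $[M^2]_{ij} = x_{ii}x_{ij} + x_{ij} x_{jj} = 0$, and since $x_{ij} \neq 0$ this forces $x_{jj} = -x_{ii}$.
\item If $\{i,j\} \notin E(G)$ and $i,j$ share exactly one common neighbor $k$, then $[M^2]_{ij} = x_{ik} x_{kj} = 0$, contradicting $x_{ik}, x_{kj} \neq 0$.
\end{enumerate}
The task is to locate, within $R_{12,1}$ and $R_{12,2}$, a collection of non-triangle edges that propagate diagonal relations around a cycle until (i) yields $x_{ii} = -x_{ii}$ for certain $i$, forcing $x_{ii}=0$, and then to find triangles or pairs at distance $2$ where the remaining $[M^2]_{ij}$ identity collapses (as in the $R_{9,1}$ argument, where the triangle $(1,4,7)$ forced $x_{14}x_{47}=0$).

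The main obstacle is twofold. On the constructive side, producing the explicit orthogonal matrix for $R_{10,2}$ (which lacks the symmetry of the circulants and the bipartite structure of $R_{14,1}$) requires either a clever ansatz or a numerical search followed by algebraic verification. On the obstruction side, for $R_{12,1}$ and $R_{12,2}$ the challenge is to identify enough non-triangle edges whose induced diagonal identifications are consistent (so they propagate) and which force a final contradiction through either a triangle with no parallel walks or a unique length-$2$ path; this requires a careful case analysis of the specific local structure of each graph.
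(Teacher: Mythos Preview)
Your plan for the five graphs with $q(G)=2$ is essentially the paper's approach (explicit orthogonal matrices, with the joined-duplication shortcut for $R_{10,3}$ and the bipartite block form for $R_{14,1}$). The paper handles $R_{10,4}$ by citing a known result that $K_{n,n}$ minus a perfect matching has $q=2$ for $n\neq 1,3$, rather than via circulant structure, but your route would also work.

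However, your strategy for $R_{12,1}$ and $R_{12,2}$ has a genuine gap. Both of these graphs are \emph{bipartite} (with bipartitions $\{1,3,5,7,10,12\}\cup\{2,4,6,8,9,11\}$ and $\{2,3,4,5,11,12\}\cup\{1,6,7,8,9,10\}$, respectively). In a bipartite graph every edge is triangle-free, so your tool (i) applies to every edge---but propagating $x_{jj}=-x_{ii}$ around any cycle returns $x_{ii}=x_{ii}$, never $x_{ii}=-x_{ii}$, because all cycles are even. Thus you cannot force any diagonal entry to vanish. Moreover, there are no triangles at all, so the ``triangle collapse'' step you invoke (as in the $R_{9,1}$ argument) is unavailable. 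And since these graphs arise as surviving candidates in Section~\ref{LargeDiameter}, they satisfy the no-unique-length-$2$-path condition, so your tool (ii) yields nothing either.

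The paper's argument is quite different and exploits precisely the bipartite structure. Since each graph is balanced bipartite, the discussion after Lemma~\ref{OrthLemma} shows that $q(G)=2$ forces the existence of an orthogonal $6\times 6$ matrix $B$ whose support matches the bipartite biadjacency pattern. In both $R_{12,1}$ and $R_{12,2}$ there is a pair of vertices with identical neighborhoods (vertices $10,12$ in $R_{12,1}$; vertices $3,4$ in $R_{12,2}$), giving two rows of $B$ with the same support. The paper then uses orthogonality of these two rows against several other rows of $B$ (each of which overlaps the common support in only two positions) to deduce, coordinate pair by coordinate pair, that one row is a scalar multiple of the other---contradicting that the rows of $B$ are orthogonal. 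You should replace your $R_{9,1}$-style plan with this twin-vertex argument via Lemma~\ref{OrthLemma}.
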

\begin{figure}
\centering
\begin{tikzpicture}[scale=1.1]
	\node (a0)  [draw,circle,inner sep=1pt] at (-1.5,0.6) {1};
	\node (a1)  [draw,circle,inner sep=1pt] at (-1.5,2.4) {2};
	\node (a2)  [draw,circle,inner sep=1pt] at (-0.5,1) {3};
	\node (a3)  [draw,circle,inner sep=1pt] at (0.5,2) {4};
	\node (a4)  [draw,circle,inner sep=1pt] at (1.5,0.6) {5};
	\node (a5)  [draw,circle,inner sep=1pt] at (1.5,2.4)  {6};
	\node (a6)  [draw,circle,inner sep=1pt] at (0.5,1)  {7};
	\node (a7)  [draw,circle,inner sep=1pt] at (-0.5,2)  {8};
	\node (a8)  [draw,circle,inner sep=1pt] at (0,3)  {9};
	\node (a9)  [draw,circle,inner sep=1pt] at (0,0)  {10};
	\node (label1) at (0,-0.75) {$R_{10,2}$};
	\draw (a0) edge (a1);
	\draw (a0) edge (a4);
	\draw (a0) edge (a7);
	\draw (a0) edge (a9);
	\draw (a1) edge (a2);
	\draw (a1) edge (a5);
	\draw (a1) edge (a8);
	\draw (a2) edge (a3);
	\draw (a2) edge (a6);
	\draw (a2) edge (a9);
	\draw (a3) edge (a4);
	\draw (a3) edge (a7);
	\draw (a3) edge (a8);
	\draw (a4) edge (a5);
	\draw (a4) edge (a9);
	\draw (a5) edge (a6);
	\draw (a5) edge (a8);
	\draw (a6) edge (a7);
	\draw (a6) edge (a9);
	\draw (a7) edge (a8);
    \node (b1)  [draw,circle,inner sep=1pt] at (-1.5+4,2) {1};
	\node (b2)  [draw,circle,inner sep=1pt] at (-0.5+4,2) {2};
	\node (b3)  [draw,circle,inner sep=1pt] at (0+4,3) {3};
	\node (b4)  [draw,circle,inner sep=1pt] at (-1.5+4,1) {4};
	\node (b5)  [draw,circle,inner sep=1pt] at (-0.5+4,1) {5};
	\node (b6)  [draw,circle,inner sep=1pt] at (0.5+4,1)  {6};
	\node (b7)  [draw,circle,inner sep=1pt] at (1.5+4,1)  {7};
	\node (b8)  [draw,circle,inner sep=1pt] at (0.5+4,2)  {8};
	\node (b9)  [draw,circle,inner sep=1pt] at (0+4,0)  {9};
	\node (b10)  [draw,circle,inner sep=1pt] at (1.5+4,2)  {10};
    \node (label2) at (0+4,-0.75) {$R_{10,3}$};
	\draw (b1) edge (b2);
	\draw (b1) edge (b3);
	\draw (b1) edge (b4);
	\draw (b1) edge (b5);
	\draw (b2) edge (b3);
	\draw (b2) edge (b4);
	\draw (b2) edge (b5);
	\draw (b6) edge (b7);
	\draw (b6) edge (b8);
	\draw (b6) edge (b9);
	\draw (b6) edge (b10);
	\draw (b7) edge (b8);
	\draw (b7) edge (b9);
	\draw (b7) edge (b10);
	\draw (b3) edge (b8);
	\draw (b3) edge (b10);
	\draw (b4) edge (b9);
	\draw (b5) edge (b9);
	\draw (b4) edge (b8);
	\draw (b5) edge (b10);
    \node (1)  [draw,circle,inner sep=1pt] at (1*1.5+8,0+1.5) {1};
	\node (2)  [draw,circle,inner sep=1pt] at (0.809*1.5+8,0.588*1.5+1.5) {2};
	\node (3)  [draw,circle,inner sep=1pt] at (0.309*1.5+8,0.951*1.5+1.5) {3};
	\node (4)  [draw,circle,inner sep=1pt] at (-0.309*1.5+8,0.951*1.5+1.5) {4};
	\node (5)  [draw,circle,inner sep=1pt] at (-0.809*1.5+8,0.588*1.5+1.5) {5};
	\node (6)  [draw,circle,inner sep=1pt] at (-1*1.5+8,0+1.5)  {6};
	\node (7)  [draw,circle,inner sep=1pt] at (-0.809*1.5+8,-0.588*1.5+1.5)  {7};
	\node (8)  [draw,circle,inner sep=1pt] at (-0.309*1.5+8,-0.951*1.5+1.5)  {8};
	\node (9)  [draw,circle,inner sep=1pt] at (0.309*1.5+8,-0.951*1.5+1.5)  {9};
	\node (10)  [draw,circle,inner sep=1pt] at (0.809*1.5+8,-0.588*1.5+1.5)  {10};
    \node (label7) at (0+8,-0.75) {$R_{10,4}$};
	\draw (1) edge (2);
	\draw (2) edge (3);
	\draw (3) edge (4);
	\draw (4) edge (5);
	\draw (5) edge (6);
	\draw (7) edge (6);
	\draw (7) edge (8);
	\draw (9) edge (8);
	\draw (9) edge (10);
	\draw (1) edge (10);
	\draw (1) edge (4);
	\draw (7) edge (4);
	\draw (7) edge (10);
	\draw (3) edge (10);
	\draw (3) edge (6);
	\draw (9) edge (6);
	\draw (9) edge (2);
	\draw (5) edge (2);
	\draw (5) edge (8);
	\draw (1) edge (8);
    \node (c0)  [draw,circle,inner sep=1pt] at (-1.5,1-5) {1};
	\node (c1)  [draw,circle,inner sep=1pt] at (-1.5,2-5) {2};
	\node (c2)  [draw,circle,inner sep=1pt] at (-0.5,1-5) {3};
	\node (c3)  [draw,circle,inner sep=1pt] at (-0.5,2-5) {4};
	\node (c4)  [draw,circle,inner sep=1pt] at (0.5,1-5) {5};
	\node (c5)  [draw,circle,inner sep=1pt] at (0.5,2-5)  {6};
	\node (c6)  [draw,circle,inner sep=1pt] at (1.5,1-5)  {7};
	\node (c7)  [draw,circle,inner sep=1pt] at (1.5,2-5)  {8};
	\node (c8)  [draw,circle,inner sep=1pt] at (-0.5,0-5)  {9};
	\node (c9)  [draw,circle,inner sep=1pt] at (-0.5,3-5)  {10};
	\node (c10)  [draw,circle,inner sep=1pt] at (0.5,0-5)  {11};
	\node (c11)  [draw,circle,inner sep=1pt] at (0.5,3-5)  {12};
    \node (label3) at (0,-0.75-5) {$R_{12,1}$};
	\draw (c0) edge (c1);
	\draw (c2) edge (c1);
	\draw (c2) edge (c3);
	\draw (c4) edge (c3);
	\draw (c4) edge (c5);
	\draw (c6) edge (c5);
	\draw (c6) edge (c7);
	\draw (c0) edge (c7);
	\draw (c8) edge (c0);
	\draw (c8) edge (c2);
	\draw (c8) edge (c4);
	\draw (c8) edge (c6);
	\draw (c10) edge (c0);
	\draw (c10) edge (c2);
	\draw (c10) edge (c4);
	\draw (c10) edge (c6);
	\draw (c9) edge (c1);
	\draw (c9) edge (c3);
	\draw (c9) edge (c5);
	\draw (c9) edge (c7);
	\draw (c11) edge (c1);
	\draw (c11) edge (c3);
	\draw (c11) edge (c5);
	\draw (c11) edge (c7);
    \node (d1)  [draw,circle,inner sep=1pt] at (0+4,0-5) {1};
	\node (d2)  [draw,circle,inner sep=1pt] at (-1.5+4,1-5) {2};
	\node (d3)  [draw,circle,inner sep=1pt] at (-0.5+4,1-5) {3};
	\node (d4)  [draw,circle,inner sep=1pt] at (0.5+4,1-5) {4};
	\node (d5)  [draw,circle,inner sep=1pt] at (1.5+4,1-5) {5};
	\node (d6)  [draw,circle,inner sep=1pt] at (-1.5+4,2-5)  {6};
	\node (d7)  [draw,circle,inner sep=1pt] at (-0.75+4,2-5)  {7};
	\node (d8)  [draw,circle,inner sep=1pt] at (0+4,2-5)  {8};
	\node (d9)  [draw,circle,inner sep=1pt] at (0.75+4,2-5)  {9};
	\node (d10)  [draw,circle,inner sep=1pt] at (1.5+4,2-5)  {10};
	\node (d11)  [draw,circle,inner sep=1pt] at (-0.5+4,3-5)  {11};
	\node (d12)  [draw,circle,inner sep=1pt] at (0.5+4,3-5)  {12};
    \node (label4) at (0+4,-0.75-5) {$R_{12,2}$};
	\draw (d1) edge (d2);
	\draw (d1) edge (d3);
	\draw (d1) edge (d4);
	\draw (d1) edge (d5);
	\draw (d6) edge (d2);
	\draw (d6) edge (d3);
	\draw (d6) edge (d4);
	\draw (d6) edge (d11);
	\draw (d7) edge (d2);
	\draw (d7) edge (d5);
	\draw (d7) edge (d11);
	\draw (d7) edge (d12);
	\draw (d8) edge (d2);
	\draw (d8) edge (d5);
	\draw (d8) edge (d11);
	\draw (d8) edge (d12);
	\draw (d9) edge (d3);
	\draw (d9) edge (d4);
	\draw (d9) edge (d11);
	\draw (d9) edge (d12);
	\draw (d10) edge (d3);
	\draw (d10) edge (d4);
	\draw (d10) edge (d5);
	\draw (d10) edge (d12);
    \node (e1)  [draw,circle,inner sep=1pt] at (0+8,0-5) {1};
	\node (e2)  [draw,circle,inner sep=1pt] at (-1.5+8,1-5) {7};
	\node (e3)  [draw,circle,inner sep=1pt] at (-0.5+8,1-5) {8};
	\node (e4)  [draw,circle,inner sep=1pt] at (0.5+8,1-5) {9};
	\node (e5)  [draw,circle,inner sep=1pt] at (1.5+8,1-5) {10};
	\node (e6)  [draw,circle,inner sep=1pt] at (-1.5+8,2-5)  {2};
	\node (e7)  [draw,circle,inner sep=1pt] at (-0.75+8,2-5)  {3};
	\node (e8)  [draw,circle,inner sep=1pt] at (0+8,2-5)  {4};
	\node (e9)  [draw,circle,inner sep=1pt] at (0.75+8,2-5)  {5};
	\node (e10)  [draw,circle,inner sep=1pt] at (1.5+8,2-5)  {6};
	\node (e11)  [draw,circle,inner sep=1pt] at (-0.5+8,3-5)  {11};
	\node (e12)  [draw,circle,inner sep=1pt] at (0.5+8,3-5)  {12};
    \node (label5) at (0+8,-0.75-5) {$R_{12,3}$};
	\draw (e1) edge (e2);
	\draw (e1) edge (e3);
	\draw (e1) edge (e4);
	\draw (e1) edge (e5);
	\draw (e6) edge (e2);
	\draw (e6) edge (e3);
	\draw (e6) edge (e4);
	\draw (e6) edge (e11);
	\draw (e7) edge (e2);
	\draw (e7) edge (e5);
	\draw (e7) edge (e11);
	\draw (e7) edge (e12);
	\draw (e8) edge (e2);
	\draw (e8) edge (e4);
	\draw (e8) edge (e11);
	\draw (e8) edge (e12);
	\draw (e9) edge (e3);
	\draw (e9) edge (e5);
	\draw (e9) edge (e11);
	\draw (e9) edge (e12);
	\draw (e10) edge (e3);
	\draw (e10) edge (e4);
	\draw (e10) edge (e5);
	\draw (e10) edge (e12);
    \node (f1)  [draw,circle,inner sep=1pt] at (2+4,0-8.5) {1};
	\node (f2)  [draw,circle,inner sep=1pt] at (1.246+4,1.564-8.5) {2};
	\node (f3)  [draw,circle,inner sep=1pt] at (-0.446+4,1.95-8.5) {3};
	\node (f4)  [draw,circle,inner sep=1pt] at (-1.802+4,0.868-8.5) {4};
	\node (f5)  [draw,circle,inner sep=1pt] at (-1.802+4,-0.868-8.5) {5};
	\node (f6)  [draw,circle,inner sep=1pt] at (-0.446+4,-1.95-8.5)  {6};
	\node (f7)  [draw,circle,inner sep=1pt] at (1.246+4,-1.564-8.5)  {7};
	\node (f8)  [draw,circle,inner sep=1pt] at (0.912+4,0.868-8.5)  {8};
	\node (f9)  [draw,circle,inner sep=1pt] at (-0.11+4,1.254-8.5)  {9};
	\node (f10)  [draw,circle,inner sep=1pt] at (-1.048+4,0.696-8.5)  {10};
	\node (f11)  [draw,circle,inner sep=1pt] at (-1.198+4,-0.386-8.5)  {11};
	\node (f12)  [draw,circle,inner sep=1pt] at (-0.446+4,-1.178-8.5)  {12};
	\node (f13)  [draw,circle,inner sep=1pt] at (0.642+4,-1.082-8.5)  {13};
	\node (f14)  [draw,circle,inner sep=1pt] at (1.246+4,-0.172-8.5)  {14};
	\node (label6) at (0+4,-0.75-10.5) {$R_{14,1}$};
	\draw (f1) edge (f8);
	\draw (f1) edge (f10);
	\draw (f1) edge (f13);
	\draw (f1) edge (f14);
	\draw (f2) edge (f8);
	\draw (f2) edge (f9);
	\draw (f2) edge (f11);
	\draw (f2) edge (f14);
	\draw (f3) edge (f8);
	\draw (f3) edge (f9);
	\draw (f3) edge (f10);
	\draw (f3) edge (f12);
	\draw (f4) edge (f9);
	\draw (f4) edge (f10);
	\draw (f4) edge (f11);
	\draw (f4) edge (f13);
	\draw (f5) edge (f10);
	\draw (f5) edge (f11);
	\draw (f5) edge (f12);
	\draw (f5) edge (f14);
	\draw (f6) edge (f8);
	\draw (f6) edge (f11);
	\draw (f6) edge (f12);
	\draw (f6) edge (f13);
	\draw (f7) edge (f9);
	\draw (f7) edge (f12);
	\draw (f7) edge (f13);
	\draw (f7) edge (f14);
\end{tikzpicture}
\caption{Sporadic $4$-regular graphs with diameter at least 3.}\label{RGraphs2}
\end{figure}

\begin{proof}

%The circulant graph $C(10, \pm 1, \pm 3)$ has $q(C(10, \pm 1, \pm 3)) = 2$ since $M\in \mathcal{S}(C(10, \pm 1, \pm 3))$ and 
%$
%M = {\footnotesize \left(\begin{array}{rrrrrrrrrr}
%1 & 0 & 0 & 0 & 0 & 0 & 1 & 1 & 1 & 1 \\
%0 & 1 & 0 & 0 & 0 & 1 & 0 & \frac{1}{2} \, \sqrt{3} - \frac{1}{2} & 1 & -\frac{1}{2} \, \sqrt{3} - \frac{1}{2} \\
%0 & 0 & 1 & 0 & 0 & 1 & -\frac{1}{2} \, \sqrt{3} - \frac{1}{2} & 0 & \frac{1}{2} \, \sqrt{3} - \frac{1}{2} & 1 \\
%0 & 0 & 0 & 1 & 0 & 1 & 1 & -\frac{1}{2} \, \sqrt{3} - \frac{1}{2} & 0 & \frac{1}{2} \, \sqrt{3} - \frac{1}{2} \\
%0 & 0 & 0 & 0 & 1 & 1 & \frac{1}{2} \, \sqrt{3} - \frac{1}{2} & 1 & -\frac{1}{2} \, \sqrt{3} - \frac{1}{2} & 0 \\
 %0 & 1 & 1 & 1 & 1 & -1 & 0 & 0 & 0 & 0 \\
%1 & 0 & -\frac{1}{2} \, \sqrt{3} - \frac{1}{2} & 1 & \frac{1}{2} \, \sqrt{3} - \frac{1}{2} & 0 & -1 & 0 & 0 & 0 \\
%1 & \frac{1}{2} \, \sqrt{3} - \frac{1}{2} & 0 & -\frac{1}{2} \, \sqrt{3} - \frac{1}{2} & 1 & 0 & 0 & -1 & 0 & 0 \\
%1 & 1 & \frac{1}{2} \, \sqrt{3} - \frac{1}{2} & 0 & -\frac{1}{2} \, \sqrt{3} - \frac{1}{2} & 0 & 0 & 0 & -1 & 0 \\
%1 & -\frac{1}{2} \, \sqrt{3} - \frac{1}{2} & 1 & \frac{1}{2} \, \sqrt{3} - \frac{1}{2} & 0 & 0 & 0 & 0 & 0 & -1
%\end{array}\right)}$
%satisfies $M^2 = 5I$.

\noindent $R_{10,2}$: To see that $q(R_{10,2})=2$, note that the following matrix $M_{10,2}\in R_{10,2}$ is orthogonal,
\[
M_{10,2} = \frac{1}{4}\left[\begin{array}{rrrrrrrrrr}
-\sqrt{2} & -2 & 0 & 0 & \sqrt{2} & 0 & 0 & 2 & 0 & -2 \\
-2 & \sqrt{2} & 2 & 0 & 0 & \sqrt{2} & 0 & 0 & -2 & 0 \\
0 & 2 & -\sqrt{2} & -2 & 0 & 0 & \sqrt{2} & 0 & 0 & -2 \\
0 & 0 & -2 & \sqrt{2} & 2 & 0 & 0 & -\sqrt{2} & -2 & 0 \\
\sqrt{2} & 0 & 0 & 2 & -\sqrt{2} & 2 & 0 & 0 & 0 & -2 \\
0 & \sqrt{2} & 0 & 0 & 2 & \sqrt{2} & -2 & 0 & 2 & 0 \\
0 & 0 & \sqrt{2} & 0 & 0 & -2 & -\sqrt{2} & -2 & 0 & -2 \\
2 & 0 & 0 & -\sqrt{2} & 0 & 0 & -2 & \sqrt{2} & -2 & 0 \\
0 & -2 & 0 & -2 & 0 & 2 & 0 & -2 & 0 & 0 \\
-2 & 0 & -2 & 0 & -2 & 0 & -2 & 0 & 0 & 0
\end{array}\right].
\]
\\

\noindent $R_{10,3}$: Note that contracting edges $\{1,2\}$ and $\{6,7\}$ in $R_{10,3}$ gives a graph isomorphic to $Q_3$. Since $R_{10,3}$ can be obtained from $Q_3$ by joined duplicating a pair of antipodal vertices, Corollary 3.3 from \cite{Butler} implies that $q(R_{10,3})\leq q(Q_3)=2$. Thus $q(R_{10,3})=2$.
\\

%To see that $q(R_{10,3})=2$, note that the following matrix $M_{10,3}\in\mathcal{S}(R_{10,3})$ is orthogonal,
%
%\[
%M_{10,3} = \frac{1}{4}\left[\begin{array}{rrrrrrrrrr}
%2 & 2 & -\sqrt{2} & 2 & \sqrt{2} & 0 & 0 & 0 & 0 & 0 \\
%2 & 2 & \sqrt{2} & -2 & -\sqrt{2} & 0 & 0 & 0 & 0 & 0 \\
%-\sqrt{2} & \sqrt{2} & 0 & 0 & 0 & 0 & 0 & 2\sqrt{2} & 0 & 2 \\
%2 & -2 & 0 & 0 & 0 & 0 & 0 & 2 & 2 & 0 \\
%\sqrt{2} & -\sqrt{2} & 0 & 0 & 0 & 0 & 0 & 0 & -2\sqrt{2} & 2 \\
%0 & 0 & 0 & 0 & 0 & 2 & -2 & -\sqrt{2} & \sqrt{2} & 2 \\
%0 & 0 & 0 & 0 & 0 & -2 & 2 & -\sqrt{2} & \sqrt{2} & 2 \\
%0 & 0 & 2\sqrt{2} & 2 & 0 & -\sqrt{2} & -\sqrt{2} & 0 & 0 & 0 \\
%0 & 0 & 0 & 2 & -2\sqrt{2} & \sqrt{2} & \sqrt{2} & 0 & 0 & 0 \\
%0 & 0 & 2 & 0 & 2 & 2 & 2 & 0 & 0 & 0
%\end{array}\right].
%\]

\noindent $R_{10,4}$: Theorem 5.3 in \cite{MR3999789} shows that $K_{n,n}$ with a perfect matching deleted has $q$-value 2 for all $n\neq 1,3$. Since $R_{10,4}$ is isomorphic to $K_{5,5}$ with a perfect matching deleted, $q(R_{10,4})=2$. 
\\

\noindent $R_{12,1}$: Following the example after Lemma \ref{OrthLemma}, set $V_1=\{1,3,5,7,10,12\}$ and $V_2=\{2,4,6,8,9,11\}$. Assume that there exists an $M \in S(R_{12,1})$ with $M^2=I$ and $M$ is of the form
\[
M=\left[\begin{array}{cc}
C & B\\
B^T & D
\end{array}\right],
\] where $C$ and $D$ are diagonal matrices. Then by Lemma \ref{OrthLemma} we know that $B$ must be an orthogonal matrix. Consider the last two rows of $B$ (whose nonzero patterns are the same since vertices 10 and 12 have the same neighbors) and suppose they are equal to $u=[a,b,c,d,0,0]$ and $v=[x,y,z,w,0,0]$. Since both $u$ and $v$ are orthogonal to row 4 of $B$ we may deduce that 
$[z,w]=\alpha [c,d]$ for some nonzero scalar $\alpha$. Similarly since both $u$ and $v$ are orthogonal to row 2 and to row 3 we conclude that $[y,z]=\beta [b,c]$ and $[x,y]=\gamma [a,b]$, where $\beta$ and $\gamma$ are nonzero. Now it follows that $\alpha=\beta=\gamma$ and hence row 5 is a multiple of row 6 which contradicts the assumption that $B$ is orthogonal. Hence $q(R_{12,1})>2$.
\\

\noindent $R_{12,2}$:
Using a similar set up as in the case of $R_{12,1}$ set $V_1= \{2,3,4,5,11,12\}$ and $V_2=\{1,6,7,8,9,10\}$. Assume that there exists an $M \in S(R_{12,1})$ with $M^2=I$ and $M$ is of the form
\[
M=\left[\begin{array}{cc}
C & B\\
B^T & D
\end{array}\right],
\] where $C$ and $D$ are diagonal matrices. Then by Lemma \ref{OrthLemma} we know that $B$ must be an orthogonal matrix. Consider rows 2 and 3 of $B$ (whose nonzero patterns are the same since vertices 3 and 4 have the same neighbors). Following a similar argument as used for the graph $R_{12,1}$ (both rows 2 and 3 must be orthogonal to rows 1,5, and 6) we can deduce that rows 2 and 3 must be multiples of one another which is a contradiction. Hence $q(R_{12,2})>2$.
\\

\noindent $R_{12,3}$: To see that $q(R_{12,3})=2$, we present an orthogonal matrix $M_{12,3}\in \mathcal{S}(R_{12,3})$ in block form, as above. We take $V_1=\{1,2,3,4,5,6\}$ and $V_2=\{7,8,9,10,11,12\}$. Let
\[
B_{12,3} = \left[\begin{array}{rrrrrr}
-\zeta & -\beta & \zeta & -\sqrt{5}/5 & 0 & 0\\
 \varepsilon & -\gamma & \beta & 0 & \delta & 0\\
 \sqrt{5}/5 & 0 & 0 & -\zeta & -\beta & -\zeta\\
 \sqrt{5}/5 & 0 & \sqrt{5}/5 & 0 & -\varepsilon & \zeta\\
 0 & \delta & 0 & -\varepsilon & \gamma & \beta\\
 0 & -\varepsilon & -\zeta & -\sqrt{5}/5 & 0 & \sqrt{5}/5\\
\end{array}\right] ,
\]
where
\[
\alpha = (\sqrt{5} + 1)/2,\ \ \ \beta = \sqrt{2\sqrt{5}/5 - 4/5},\ \ \ 
\gamma = \sqrt{-7\sqrt{5}/10 + 17/10},\ \ \ \delta = \sqrt{5}/10 + 1/2,
\]
\[
 \varepsilon = \alpha\beta,\ \ \text{and}\ \ \zeta = \alpha\gamma.\\
\]
Thus
\[
M_{12,3} = \left[\begin{array}{cc}
0 & B_{12,3}\\
B_{12,3}^T & 0
\end{array}\right]
\]
has the desired properties.
\\

%To see that $q(R_{12,3})=2$, note that the following matrix $M_{12,3}\in\mathcal{S}(R_{12,3})$ is orthogonal
%
%\[
%M_{12,3}=\left[\begin{array}{rrrrrrrrrrrr}
%0 &-\zeta  & -\beta & \zeta & -\sqrt{5}/5 & 0 & 0 & 0 & 0 & 0 & 0 & 0 \\

%-\zeta & 0 & 0 & 0 & 0 & \epsilon  & \sqrt{5}/5 & \sqrt{5}/5 & 0 & 0 & 0 & 0 \\

%-\beta& 0 & 0 & 0 & 0 & -\gamma & 0 & 0 & \delta & -\epsilon & 0 & 0 \\

%\zeta & 0 & 0 & 0 & 0 & \beta & 0 & \sqrt{5}/5 & 0 & -\zeta & 0 & 0 \\

%-\sqrt{5}/5 & 0 & 0 & 0 & 0 & 0 & -\zeta & 0 & -\epsilon & -\sqrt{5}/5 & 0 & 0 \\

%0 & \epsilon & -\gamma & \beta & 0 & 0 & 0 & 0 & 0 & 0 & \delta & 0 \\

%0 & \sqrt{5}/5 & 0 & 0 & -\zeta & 0 & 0 & 0 & 0 & 0 & -\beta & -\zeta \\

%0 & \sqrt{5}/5 & 0 & \sqrt{5}/5 & 0 & 0 & 0 & 0 & 0 & 0 & -\epsilon & \zeta\\

%0 & 0 & \delta & 0 & -\epsilon & 0 & 0 & 0 & 0 & 0 & \gamma & \beta \\

%0 & 0 & -\epsilon & -\zeta & -\sqrt{5}/5 & 0 & 0 & 0 & 0 & 0 & 0 & \sqrt{5}/5 \\

%0 & 0 & 0 & 0 & 0 & \delta & -\beta & -\epsilon & \gamma & 0 & 0 & 0 \\

%0 & 0 & 0 & 0 & 0 & 0 & -\zeta & \zeta & \beta & \sqrt{5}/5 & 0 & 0
%\end{array}\right],
%\]
%
%with
%
%\[
%\alpha = (\sqrt{5} + 1)/2,\ \ \ \beta = \sqrt{2\sqrt{5}/5 - 4/5},\ \ \ 
%\gamma = \sqrt{-7\sqrt{5}/10 + 17/10},\ \ \ \delta = \sqrt{5}/10 + 1/2,
%\]
%\[
% \epsilon = \alpha\beta,\ \ \text{and}\ \ \zeta = \alpha\gamma.\\
%\]
%
%\\

\noindent $R_{14,1}$: To see that $q(R_{14,1})=2$, we provide an orthogonal matrix $M_{14,1}\in \mathcal{S}(R_{14,1})$ in block form. We take $V_1=\{1,2,3,4,5,6,7\}$ and $V_2=\{8,9,10,11,12,13,14\}$, and let
\[
B_{14,1} = \frac{1}{2}\left[\begin{array}{rrrrrrr}
  1 & 0 & 1 & 0 & 0 & -1 & 1 \\
  1 & 1 & 0 & 1 & 0 & 0 & -1 \\
  -1 & 1 & 1 & 0 & 1 & 0 & 0 \\
  0 & -1 & 1 & 1 & 0 & 1 & 0 \\
  0 & 0 & -1 & 1 & 1 & 0 & 1 \\
  1 & 0 & 0 & -1 & 1 & 1 & 0 \\
  0 & 1 & 0 & 0 & -1 & 1 & 1 \\
\end{array}\right].
\]
Then
\[
M_{14,1}=\left[\begin{array}{cc}
0 & B_{14,1}\\
B_{14,1}^T & 0
\end{array}\right]
\]
has the desired properties. We note that $M_{14,1}$ was presented in \cite{MR2360149}.

%We see $q(R_{14,1})=2$ from the orthogonal matrix $M_{14,1}\in\mathcal{S}(R_{14,1})$,
%
%\[
%M_{14,1}=\frac{1}{2}\left[\begin{array}{rrrrrrrrrrrrrr}
%0 & 0 & 0 & 0 & 0 & 0 & 0 & 1 & 0 & 1 & 0 & 0 & -1 & 1 \\
%0 & 0 & 0 & 0 & 0 & 0 & 0 & 1 & 1 & 0 & 1 & 0 & 0 & -1 \\
%0 & 0 & 0 & 0 & 0 & 0 & 0 & -1 & 1 & 1 & 0 & 1 & 0 & 0 \\
%0 & 0 & 0 & 0 & 0 & 0 & 0 & 0 & -1 & 1 & 1 & 0 & 1 & 0 \\
%0 & 0 & 0 & 0 & 0 & 0 & 0 & 0 & 0 & -1 & 1 & 1 & 0 & 1 \\
%0 & 0 & 0 & 0 & 0 & 0 & 0 & 1 & 0 & 0 & -1 & 1 & 1 & 0 \\
%0 & 0 & 0 & 0 & 0 & 0 & 0 & 0 & 1 & 0 & 0 & -1 & 1 & 1 \\
%1 & 1 & -1 & 0 & 0 & 1 & 0 & 0 & 0 & 0 & 0 & 0 & 0 & 0 \\
%0 & 1 & 1 & -1 & 0 & 0 & 1 & 0 & 0 & 0 & 0 & 0 & 0 & 0 \\
%1 & 0 & 1 & 1 & -1 & 0 & 0 & 0 & 0 & 0 & 0 & 0 & 0 & 0 \\
%0 & 1 & 0 & 1 & 1 & -1 & 0 & 0 & 0 & 0 & 0 & 0 & 0 & 0 \\
%0 & 0 & 1 & 0 & 1 & 1 & -1 & 0 & 0 & 0 & 0 & 0 & 0 & 0 \\
%-1 & 0 & 0 & 1 & 0 & 1 & 1 & 0 & 0 & 0 & 0 & 0 & 0 & 0 \\
%1 & -1 & 0 & 0 & 1 & 0 & 1 & 0 & 0 & 0 & 0 & 0 & 0 & 0
%\end{array}\right],
%\]
%
%given in \cite{MR2990027}. {\color{red}This was not the first paper the matrix appeared, fix reference and possibly say more about it. Can/should also make the matrix smaller by using a smaller block matrix}

\end{proof}

%%%%%%%%%%%%%%%%%%%%%%%%%%%%%%%%%%%%%%%%%%%%%%%%%%%%%%%%%%%%%
\section{Proof of Theorem \ref{Regular4} for 4-Regular Graphs with Large Diameter}\label{LargeDiameter}
%%%%%%%%%%%%%%%%%%%%%%%%%%%%%%%%%%%%%%%%%%%%%%%%%%%%%%%%%%%%%

In this section we complete the proof of items (3), (4) and (5) of Theorem \ref{Regular4} for graphs with diameter at least $3$. Recall from Section \ref{Preliminaries}, in the distance partition from any vertex $v$, every $u\in N_i(v)$ has at least two neighbors in $N_{i-1}(v)$ for all $i\geq 2$. These edges account for $2n-2-\deg(v)=2n-6$ of the edges of $G$. We consider the possible locations of the six extra edges not accounted for by these predecessors. Throughout the proofs in this section, once all of the edges incident with a vertex have been accounted for, we say that the vertex is \emph{full}.

\begin{lemma} \label{Max3LowEdges}  
Let $G$ be a connected $4$-regular graph with diameter at least $3$ and let $v$ be a vertex for which $\epsilon(v)\geq 3$.  Suppose that every vertex in $N_2(v)$ and $N_3(v)$ has exactly two predecessors and that $G[N_1(v)] \cup G[N_2(v)]$ contains at most three edges.  Then $q(G)>2$.
\end{lemma}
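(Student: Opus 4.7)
My plan is to derive a contradiction by classifying vertices of $N_2(v)$ by their two-element \emph{type} $P(u) \subseteq N_1(v)$ (the set of predecessors of $u$) and then extracting inequalities from two different applications of Lemma~\ref{shortest paths lemma}.

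First I would record the edge counts implied by the hypothesis. Writing $f_i := |E(G[N_i(v)])|$, the exact-two-predecessor assumption together with layer-by-layer degree sums give $|N_1(v)|=4$, $|N_2(v)| = 6 - f_1$, and $|N_3(v)| = 6 - f_1 - f_2 \ge 3$. For each two-element subset $T \subseteq N_1(v)$, let $m_T$ count the vertices of $N_2(v)$ with $P(u)=T$, so $\sum_T m_T = 6 - f_1$ and one obtains the four ``degree identities''
\[
\sum_{T \ni n_i} m_T \;=\; 3 - \deg_{G[N_1(v)]}(n_i), \quad n_i \in N_1(v).
\]

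Next I would invoke Lemma~\ref{shortest paths lemma} in two ways. For every non-edge $T = \{n_i, n_j\}$ of $G[N_1(v)]$, the length-$2$ path $n_i v n_j$ needs a companion whose midpoint lies in $N_1(v) \cup N_2(v)$; so either $n_i, n_j$ have a common neighbor inside $N_1(v)$, or else the midpoint is a type-$T$ vertex and $m_T \ge 1$. Separately, for $u' \in N_3(v)$ with $P(u') = \{u_1, u_2\}$ and any $n \in P(u_1)$, the path $n u_1 u'$ needs a companion, whose midpoint is a common neighbor of $n$ and $u'$; the only candidate is $u_2$ (any common neighbor must lie in $N_2(v)$, and $u'$'s $N_2$-neighbors are exactly $u_1, u_2$), forcing $n \in P(u_2)$. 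Hence the two predecessors of every vertex in $N_3(v)$ have the same type.

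The pivotal consequence is that whenever $m_T = 1$, the unique type-$T$ vertex $u$ can have no successor in $N_3(v)$ (such a successor would require a second type-$T$ predecessor), and so both of $u$'s non-predecessor edges lie inside $N_2(v)$. Letting $k := |\{T : m_T = 1\}|$, summing in-$N_2(v)$ degrees gives $2 f_2 \ge 2k$, i.e., $f_2 \ge k$; combined with $f_1 + f_2 \le 3$ this yields
\[
k \;\le\; 3 - f_1.
\]

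To finish, I would enumerate the (up to isomorphism) seven possibilities for $G[N_1(v)]$ with at most three edges: the empty graph, one edge, $P_3$, $2K_2$, $K_3 \cup K_1$, $P_4$, and $K_{1,3}$. In each case the constraint ``$m_T \ge 1$ whenever the endpoints of a non-edge $T$ have no common neighbor inside $G[N_1(v)]$,'' combined with the four degree identities, pins down the multiset $(m_T)$ almost uniquely, and one reads off that $k > 3 - f_1$, contradicting the inequality above. The main obstacle is simply the bookkeeping for these seven cases; the structural content comes entirely from the two applications of Lemma~\ref{shortest paths lemma} and no external machinery is needed.
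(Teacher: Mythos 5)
Your argument is correct. I checked the two structural consequences of Lemma~\ref{shortest paths lemma} that you extract --- that every non-edge $T$ of $G[N_1(v)]$ whose endpoints have no common neighbour inside $N_1(v)$ forces $m_T\ge 1$, and that the two predecessors of any vertex of $N_3(v)$ must have identical types --- and both are sound, as is the resulting inequality $f_2\ge k$ and the seven-case verification that $k>3-f_1$ (in the $P_4$ case the multiset $(m_T)$ is not pinned down uniquely, but every solution has $k\ge 1>0=3-f_1$, which suffices).

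The top-level skeleton matches the paper's: both proofs split on the isomorphism type of $G[N_1(v)]$ and both begin by using paths through $v$ to force common successors in $N_2(v)$ for non-adjacent pairs in $N_1(v)$. But the mechanism for the contradiction is genuinely different. The paper handles each case with a bespoke argument, exhibiting in each configuration an explicit unique shortest path of length $2$ (often running through $N_2(v)$ or $N_3(v)$) or a vertex of degree at least $5$. You instead prove one uniform statement --- a vertex of $N_2(v)$ whose type occurs only once must be terminal and hence spends two edges inside $G[N_2(v)]$ --- and reduce every case to the single counting inequality $k\le f_2\le 3-f_1$. Your observation that the two predecessors of an $N_3(v)$-vertex share their predecessor pair does not appear explicitly in the paper's proof, and it buys a cleaner, less ad hoc finish. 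The one trade-off worth noting: the paper deliberately avoids the hypothesis on $N_3(v)$ in its Case 3 and in the $P_4$ and $C_3\cup K_1$ subcases of Case 4, and explicitly flags this for reuse in the proof of Lemma~\ref{No4pred}; your argument invokes that hypothesis in every case (through the terminal-vertex step), so while it proves the lemma exactly as stated, it would not directly support the later citations of ``the observation at the end of its proof.''
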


\begin{proof}
Assume $q(G) = 2$.\\

\noindent {\em Case 1:}  $N_1(v)$ is an independent set.   

\noindent Each pair of vertices of $N_1(v)$ share $v$ as a common neighbor, so each pair must share a common neighbor in $N_2(v)$ as well.  There are $\binom{4}{2} = 6$ such pairings, each resulting in a distinct vertex in $N_2(v)$ with two predecessors in $N_1(v)$ and accounting for all the edges between $N_1(v)$ and $N_2(v)$, also implying that $|N_2(v)| = 6$.

Suppose each vertex in $N_2(v)$ has a neighbor in $N_2(v)$.  Then there are three edges in $G[N_2(v)]$ and $G[N_2(v)]=3K_2$.  Let $xy$ be an edge in $N_2(v)$ and let $a,b$ be  the predecessors of $x$.  Then $a,b$ are not both predecessors of $y$.  Suppose that $b$ and $y$ are not adjacent.  Then $bxy$ is a unique shortest path of length 2 between $b$ and $y$.

Otherwise some vertex, say $u\in N_2(v)$, has no neighbor in $N_2(v)$.  Vertex $u$ shares a common neighbor in $N_1(v)$ with four other vertices in $N_2(v)$, call them $x_1, x_2,x_3, x_4$.  To avoid unique paths from $u$ to $x_i$ for $i=1,2,3,4$, $u$ and $x_i$ must have a common successor.  These common successors are distinct since each has just 2 predecessors. But then $\deg(u) \geq 6$.\\

\noindent {\em Case 2:}  $G[N_1(v)]$ contains exactly one edge.

\noindent This case follows the same logic as Case 1, with the exception that there are five pairings of nonadjacent vertices in $N_1(v)$ that share $v$ as a common neighbor so each of these pairs must have a common neighbor in $N_2(v)$.  As before, this accounts for all the edges between $N_1(v)$ and $N_2(v)$, so there are exactly five vertices in $N_2(v)$.  Since $G[N_2(v)]$ contains at most two edges, some vertex $u$ in $N_2(v)$ has no neighbors in $N_2(v)$.  Vertex $u$ shares a neighbor in $N_1(v)$ with at least three other vertices in $N_2(v)$ .  Arguing similarly to Case 1, $\deg(u) \geq 5$.\\

 \noindent {\em Case 3:}  $G[N_1(v)]$ contains exactly two edges.  

\noindent Suppose the two edges do not share a vertex.  Then the four pairs of nonadjacent vertices in $N_1(v)$ that have $v$ as a common neighbor must each have a common neighbor in $N_2(v)$, accounting for all the edges between $N_1(v)$ and $N_2(v)$ and resulting in exactly four vertices in $N_2(v)$. Since there is at most one edge in $N_2(v)$, there is a vertex $u$ in $N_2(v)$ that has no neighbor in $N_2(v)$.  Let $a$ be a neighbor of $u$ in $N_1(v)$ with $ab$ one of the two edges in $G[N_1(v)]$. Then $uab$ is a unique shortest path of length $2$ in $G$. 

Otherwise the two edges in $G[N_1(v)]$ share a vertex, say $c$. Then $c$ has exactly one neighbor $w$ in $N_2(v)$. Let the non-neighbor of $c$ in $N_1(v)$ be $d$. Since $c$ and $d$ have $v$ as a common neighbor, the other predecessor of $w$ in $N_1(v)$ must be $d$. To prevent a unique shortest path of length 2 between $w$ and the neighbors of $c$ in $N_1(v)$, there must be an edge connecting $w$ to a vertex, $y\in N_2(v)$ whose two predecessors are the two neighbors of $c$ in $N_1(v)$. But this creates a unique shortest path $dwy$ of length $2$. Note that in this case we did not use the hypothesis that each vertex in $N_3(v)$ has two predecessors.  This will be used later in the proof of Lemma \ref{No4pred}.\\

\noindent {\em Case 4:}  $G[N_1(v)]$ contains exactly three edges.

\noindent If $G[N_1(v)] \cong P_4$, then the two endpoints of the $P_4$ must have a common neighbor $w$ in $N_2(v)$. Then $w$ has a unique shortest path of length 2 to both non-end vertices of $P_4$.  

If $G[N_1(v)] \cong C_3 \cup K_1$, the same argument as in Case 3 applies, with $c$ being any vertex in the $C_3$.  Note again, that in these first  two possibilities that the hypothesis that each vertex in $N_3(v)$ has two predecessors was not used.  This will be used at the end of the proof of Lemma \ref{No4pred}.

Suppose $G[N_1(v)] \cong K_{1,3}$.  Then each pair of nonadjacent vertices in $N_1(v)$ has a common neighbor in $N_1(v)$, in addition to the common neighbor $v$.  But in order for each vertex in $N_1(v)$ to have degree 4, there must be three vertices in $N_2(v)$, each adjacent to a different pair of leaves of the $K_{1,3}$.  Each pair of vertices in $N_2(v)$ has one common neighbor in $N_1(v)$, and since $N_2(v)$ is an independent set, this implies that each pair of vertices in $N_2(v)$ must have a common neighbor in $N_3(v)$. These neighbors are distinct because each vertex in $N_3(v)$ has two predecessors. Note that all three vertices in $N_2(v)$ are full.  Let $a$ be a leaf vertex of the $K_{1,3}$; then there exist two paths of length 2, $asx$ and $aty$ for some $s,t \in N_2(v)$ and $x,y\in N_3(v)$, that are the unique $ax$ and $ay$ paths of length 2. 
\end{proof}

\begin{corollary} \label{No3or4pred}
Let $G$ be a connected $4$-regular graph with diameter at least $3$.  If $v$ is a vertex with $\epsilon(v)\geq 3$ and for which there is no vertex in the distance partition from $v$ with 3 or 4 predecessors, then $q(G)>2$.
\end{corollary}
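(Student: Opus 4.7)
The plan is to argue by contradiction, assuming $q(G) = 2$ and applying Lemma \ref{Max3LowEdges} to the vertex $v$ from the statement. First, Lemma \ref{shortest paths lemma} forces every vertex in $N_i(v)$ with $i \geq 2$ to have at least two predecessors, and the hypothesis that no vertex has 3 or 4 predecessors upgrades this to exactly two predecessors. This already gives the first hypothesis of Lemma \ref{Max3LowEdges} (that all vertices in $N_2(v)$ and $N_3(v)$ have exactly two predecessors), and it remains only to verify that $G[N_1(v)] \cup G[N_2(v)]$ contains at most three edges.

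The verification reduces to a level-by-level edge count. Writing $e_i := |E(G[N_i(v)])|$, the degree-sum in the distance partition gives $e_1 = 6 - |N_2(v)|$ and $e_i = |N_i(v)| - |N_{i+1}(v)|$ for $i \geq 2$ (the ``six extra edges'' from Section \ref{Preliminaries} are recovered by telescoping these contributions). Consequently,
\[
e_1 + e_2 \;=\; 6 - |N_3(v)|,
\]
so the remaining hypothesis of Lemma \ref{Max3LowEdges} is equivalent to the inequality $|N_3(v)| \geq 3$.

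The key step, and the one I expect to be the main technical obstacle, is establishing $|N_3(v)| \geq 3$. The same counting shows each vertex in $N_i(v)$ for $i \geq 2$ has at most two neighbors in $N_{i+1}(v)$, so $|N_{i+1}(v)| \leq |N_i(v)|$, and the level sizes form a non-increasing sequence from $i = 2$ onward. It therefore suffices to bound the last level $N_r(v)$, where $r = \epsilon(v) \geq 3$. Each vertex in $N_r(v)$ has no successors, so its degree $4 = p + t + s$ decomposes as $2 + t + 0$, forcing $t = 2$ same-level neighbors; this is possible only if $|N_r(v)| \geq 3$. Chaining along the non-increasing sequence yields $|N_3(v)| \geq |N_r(v)| \geq 3$.

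Combining these observations, $e_1 + e_2 \leq 3$, so Lemma \ref{Max3LowEdges} applies and produces $q(G) > 2$, contradicting the assumption $q(G) = 2$. Care is needed in the monotonicity step to use precisely the structural input (each $u \in N_i(v)$ with $i \geq 2$ has exactly two predecessors, leaving at most two remaining incident edges to distribute sideways or upward), but once this degree bookkeeping is in place the reduction to the last level is immediate.
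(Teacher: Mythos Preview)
Your proof is correct and follows essentially the same approach as the paper. The paper argues more tersely in the ``six extra edges'' framework: since $G[N_d(v)]$ is $2$-regular it absorbs at least three extra edges, leaving at most three for the earlier levels, whence Lemma~\ref{Max3LowEdges} applies; your explicit formulas $e_1 = 6 - |N_2(v)|$, $e_i = |N_i(v)| - |N_{i+1}(v)|$, and the monotonicity chain down to $|N_r(v)| \geq 3$ simply unpack this same accounting.
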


\begin{proof}
Let $N_d(v)$ be the furthest distance set of $v$.  Since every vertex in $N_d(v)$ has exactly two predecessors, we know that $G[N_d(v)]$ is $2$-regular.   So at least three of the six extra edges must be in $N_d(v)$, and the maximum number of edges that could appear within the subgraphs  $G[N_i(v)]$ for $1\leq i \leq d-1$ is three. So the hypotheses of Lemma \ref{Max3LowEdges} are satisfied and $q(G) > 2$. 
\end{proof}

\begin{lemma} \label{No4pred}
Let $G$ be a connected $4$-regular graph with diameter at least $3$ and let $v$ be a vertex for which $\epsilon(v) \geq 3$. If in the distance partition from $v$ no vertex has four predecessors, then either $G\cong R_{10,3}$, $G\cong K_3\Box C_4$, or $q(G)>2$.
\end{lemma}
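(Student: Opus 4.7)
Assume for contradiction that $q(G)=2$ while $G$ is neither $R_{10,3}$ nor $K_3\Box C_4$. By Corollary \ref{No3or4pred}, since no vertex has four predecessors by hypothesis, some vertex in the distance partition from $v$ must have exactly three predecessors. Recall that the six ``extra edges'' (beyond the baseline $2n-6$ edges of two predecessors per vertex at distance $\geq 2$, plus the four edges at $v$) must all arise as either (i) the third predecessor edge of some $3$-predecessor vertex, or (ii) a within-level edge in some $G[N_i(v)]$. Writing $p$ for the number of $3$-predecessor vertices and $m$ for the total number of within-level edges, we have the balance $p+m=6$ with $p\geq 1$.

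The first major step of the plan is to show that every $3$-predecessor vertex lies in the deepest distance set $N_d(v)$, where $d=\epsilon(v)$. If $u\in N_i(v)$ with $2\leq i\leq d-1$ had three predecessors $a,b,c\in N_{i-1}(v)$, then $u$ would have only one remaining neighbor, so its interaction with $N_i(v)\cup N_{i+1}(v)$ is highly restricted. Combined with Lemma \ref{shortest paths lemma} applied to the paths $aub$, $auc$, $buc$ (each demanding an adjacency or an additional common neighbor of its endpoints), and with the tight budget $p+m=6$, I expect to show this structure forces either a violation of Lemma \ref{shortest paths lemma} elsewhere or a $4$-predecessor vertex, both of which contradict our assumptions.

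Next I would show $\epsilon(v)=3$. If $\epsilon(v)\geq 4$, then by the previous step $N_2(v)$ and $N_3(v)$ contain only $2$-predecessor vertices, so Lemma \ref{Max3LowEdges} forces $G[N_1(v)]\cup G[N_2(v)]$ to contain at least $4$ edges. This leaves at most two within-level edges for the deeper levels to accommodate together with the $p\geq 1$ third-predecessor edges. Every vertex in $N_d(v)$ has only two predecessors, so its remaining two neighbors lie inside $N_d(v)$, making $G[N_d(v)]$ a $2$-regular graph on the whole of $N_d(v)$; the resulting local edge count at the bottom swiftly overruns the remaining extra-edge budget, ruling out $\epsilon(v)\geq 4$.

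It remains to handle $\epsilon(v)=3$ with all $3$-predecessor vertices in $N_3(v)$. Here $|N_1(v)|=4$ and $n=5+|N_2(v)|+|N_3(v)|$. I would case-split on the number of edges in $G[N_1(v)]$, using $4$-regularity together with Lemma \ref{shortest paths lemma} applied to pairs in $N_1(v)$ (which share $v$) and to pairs in $N_2(v)$ (which share a neighbor in $N_1(v)$ or must otherwise pick up an additional common neighbor). I expect exactly two admissible configurations to survive: one with $|N_2(v)|=3$, $|N_3(v)|=2$, $G[N_1(v)]$ carrying three edges (which reconstructs $R_{10,3}$, matching the distance partition coming from a vertex of the twin pair there), and one with $|N_2(v)|=5$, $|N_3(v)|=2$, one edge inside $N_1(v)$ and two inside $N_2(v)$ (which reconstructs $K_3\Box C_4$). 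The main obstacle will be the combinatorial bookkeeping across the subcases of how the six extra edges split among $G[N_1(v)], G[N_2(v)], G[N_3(v)]$ and the third-predecessor edges into $N_3(v)$; the delicate part is verifying that in each admissible configuration the graph closes up uniquely to $R_{10,3}$ or $K_3\Box C_4$ and that all other configurations either violate Lemma \ref{shortest paths lemma} or fail $4$-regularity.
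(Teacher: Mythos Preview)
Your final case-split on $|E(G[N_1(v)])|$ and the identification of the two surviving configurations match the paper's approach, but your two preliminary reductions have real problems.

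\textbf{Step 1 is not justified and may not be provable as stated without the later case analysis.} You want to show directly that every $3$-predecessor vertex lies in $N_d(v)$. Your sketched argument, applying Lemma~\ref{shortest paths lemma} to $aub$, $auc$, $buc$ for a $3$-predecessor vertex $u$, does not lead there: the predecessors $a,b,c$ can perfectly well have common neighbours in $N_{i-2}(v)$ or in $N_i(v)$, and nothing in that local picture forces $i=d$. The paper takes a different route. It first proves three structural claims: (i) a $3$-predecessor vertex has no successor; (ii) $3$-predecessor vertices occur in pairs as endpoints of a path inside a single level; (iii) no $3$-predecessor vertex lies in $N_2(v)$. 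From (ii) each pair consumes at least three extra edges, so $|S|\in\{2,4\}$ and at most three (respectively zero) extra edges are left for $G[N_i(v)]$ with $i<d$. Then, letting $j$ be the \emph{smallest} level containing a $3$-predecessor vertex, if $j>3$ the hypotheses of Lemma~\ref{Max3LowEdges} are met, giving $q(G)>2$. Hence $j=3$; the conclusion $d=3$ only falls out of the subsequent case analysis, not beforehand. Trying to prove your stronger step 1 first likely forces you to run that case analysis anyway, making the argument circular.

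\textbf{Step 2 is internally inconsistent.} Having placed all $3$-predecessor vertices in $N_d(v)$, you then write ``Every vertex in $N_d(v)$ has only two predecessors,'' which contradicts step 1. The underlying edge-count can be salvaged (with $p\geq 2$ from the pairing claim one gets $p+e_d\geq 3p/2\geq 3$ while at most $2$ extra edges remain once $e_1+e_2\geq 4$), but as written it does not stand.

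In short, replace your step 1 by the paper's Claims (no successor; pairs; not in $N_2$) and the deduction $j=3$, and drop the separate step 2: the case-split on $|E(G[N_1(v)])|$ then goes through essentially as you outline, with Case ``one edge'' yielding $K_3\Box C_4$ and Case ``$K_{1,3}$'' yielding $R_{10,3}$.
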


\begin{proof}
Suppose $q(G) = 2$.  We assume $G$ has a vertex with three predecessors as otherwise the result follows from Corollary \ref{No3or4pred}.  We begin by establishing three claims.\\

\noindent\emph{Claim \#1:} Any vertex with three predecessors cannot have a successor.

\noindent\emph{Proof of Claim \#1:}
Suppose some vertex $w$ in $N_i(v)$ has three predecessors in $N_{i-1}(v)$ and one successor $z$ in $N_{i+1}(v)$. Note that $i$ must be at least $2$.  Since $z$ has at most three predecessors in $N_i(v)$, there must be a neighbor $x$ of $z$ in $N_{i+1}(v)$ or $N_{i+2}(v)$. But then there is a unique shortest path of length 2 between $w$ and $x$, as $w$ is full.

Observe that the proof of Claim \#1 also implies that if $G$ is a graph and $v\in V(G)$ for which some vertex in the distance partition from $v$ has four predecessors, any vertex $w\in N_i(v)$ with three predecessors cannot have a successor $z$ unless $z$ is a vertex with four predecessors. We will use this in Case 4 of the proof of Lemma \ref{HardCase}.\\

\noindent\emph{Claim \#2:} Vertices with three predecessors occur in pairs, with each pair the endpoints of a path in some distance set $N_i(v)$.

\noindent\emph{Proof of Claim \#2:}
  Suppose $w_1 \in N_i(v)$ has three predecessors.  Since it can have no successors but has degree 4, it must have a neighbor $w_2\in N_i(v)$.  So $w_1$ is one end of a path $w_1 w_2 \ldots w_k$ in $N_i(v)$, where $w_2, \ldots, w_{k-1}$ have two predecessors each, and hence is full, and $w_{k-1}$ is the only neighbor of $w_k$ in $N_i(v)$.  If $w_k$ has a successor $y$, then $yw_kw_{k-1}$ is a unique shortest path of length 2 between $y$ and $w_{k-1}$.  So $w_k$ must have three predecessors.\\

\noindent\emph{Claim \#3:} A pair of vertices with three predecessors can occur only in $N_i(v)$ for $i\geq 3$.

\noindent\emph{Proof of Claim \#3:}  Let $w$ be a vertex with three predecessors. Suppose $w \in N_2(v)$.  Note that $w$ cannot have a successor. Let $z$ be another vertex in $N_2(v)$ with three predecessors, and no successor.  Then there are at least two vertices in $N_1(v)$ that are common predecessors of $w$ and $z$.  Since $\epsilon(v) \geq 3$, there must be a vertex $x \in N_2(v)$ that has a successor $y\in N_3(v)$.  So $x$ cannot have three predecessors and therefore must have two.  If a predecessor of $x$ is a common predecessor of $w$ and $z$, then there is a unique shortest path of length 2 between $y$ and this predecessor of $x$.  So $w$ and $z$ must have exactly two shared predecessors, and the two predecessors of $x$ each has exactly one of $\{w,z\}$ as a successor.  This implies that, via $N_1(v)$, there is a path of length 2 between $x$ and $w$ (and between $x$ and $z$).  Since these paths cannot be unique shortest paths, and $x$ cannot be adjacent to $w$ (as it would then be on the $wz$ path in $N_2(v)$, and hence have degree at least $5$), $x$ must have a common neighbor $t\neq z$ in $N_2(v)$ with $w$.  But then $yxt$ is a unique shortest path of length 2 between $y$ and $t$.\\

Let $S$ represent the set of vertices with three predecessors.  We have shown that no vertex in $S$ can have a successor and all vertices in $S$ must occur in pairs in the distance sets $N_i(v)$ for $i\geq 3$.  A pair of such vertices requires a minimum of three of the six additional edges.  It follows that $|S| = 2$ or $|S| = 4$, leaving at most three or zero edges that could appear in the subgraphs $G[N_i(v)]$ for $1\leq i \leq \epsilon(v)-1$, respectively.  Let $j$ represent the smallest value of $i$ for which $N_i(v)$ contains an element of $S$.  If $j>3$, then Lemma \ref{Max3LowEdges} implies that $q(G)>2$, so we only have to consider $j=3$.  \\

\noindent {\em Case 1:} {$N_1(v)$ is an independent set.}

\noindent By the argument in Case 1 of Lemma \ref{Max3LowEdges}, $N_2(v)$ consists of six vertices, such that each vertex shares one common neighbor in $N_1(v)$ with each of four other vertices in $N_2(v)$.  Consider $u\in N_2(v)$, and denote by $w$ the unique vertex in $N_2(v)$ with which $u$ does not share a neighbor in $N_1(v)$. \\

\noindent {\em Case 1(a):} $N_2(v)$  is also an independent set.

\noindent Let $y_1, y_2$ be the successors of $u$ in $N_3(v)$ and let $x_1, x_2, x_3, x_4$ be the vertices in $N_2(v)$ that share a unique common predecessor with $u$.  Since there cannot be a unique shortest path of length 2 between $u$ and $x_i$ for any $i$, each $x_i$ must be adjacent to $y_1$ or $y_2$.  Since neither $y_1$ nor $y_2$ has four predecessors, $y_1$ must be adjacent to two of the $x_i$ and $y_2$ to the other two. Then $y_1, y_2 \in S$.  Neither $y_1$ nor $y_2$ is adjacent to $w$, so repeating the same argument, there are successors $z_1, z_2 \in S$ of $w$ such that $z_1$ is adjacent to two of the $x_i$ and $z_2$ to the other two.  This accounts for all edges between $N_2(v)$ and $N_3(v)$, so $N_3(v)=\{ y_1, y_2, z_1, z_2 \} =S$.  Note that each $x_i$ is adjacent to exactly one of $y_1,y_2$ and one of $z_1, z_2$.  Since none of $y_1, y_2, z_1, z_2$ has a successor by Claim \#1, there are exactly two edges in $G[N_3(v)]$ that do not share an endpoint.  If $y_1 y_2$ is not an edge, then $y_1 u y_2$ is the unique shortest path of length 2.  So $y_1 y_2$ is an edge, and $z_1z_2$ is an edge.  Without loss of generality, suppose that $x_1$ is adjacent to $y_1$.  Then $x_1$ and $y_2$ are not adjacent and $x_1y_1y_2$ is a unique shortest path of length 2. \\

\noindent {\em Case 1(b):}  $N_2(v)$ is not independent, so $|S| = 2$.

\noindent  Suppose there is an edge $ut$ in $G[N_2(v)]$, not sharing an endpoint with any other edge in $G[N_2(v)]$.  Then there is a shortest path of length 2 through $N_1(v)$ from $u$ to at least three other vertices in $N_2(v)$. So for each of these three vertices there must be a shortest path of length 2 through $N_3(v)$.  Since $\deg(u)=4$, $u$ can only have one successor, so there is a single vertex $x \in N_3(v)$ on all three paths.  But then $x$ has four predecessors. 

 Now suppose $ut$ and $rt$ are edges in $G[N_2(v)]$.  Recall that $w$ is the vertex in $N_2(v)$ that shares no predecessors with $u$. Let $y$ and $z$ be the remaining vertices in $N_2(v)$. To avoid a unique shortest path of length 2 through $N_1(v)$ from $u$ to $y$ or $z$, there must be a vertex $x$ in $N_3(v)$ whose predecessors are $u$, $y$, and $z$.  Then $tux$ is a unique shortest path of length 2. \\

 Therefore Case 1 cannot occur, and $N_1(v)$ is never an independent set.\\

\noindent {\em Case 2:}  $G[N_1(v)]$ contains exactly one edge, so $|S| = 2$.

\noindent We follow the proof of Case 2 of Lemma \ref{Max3LowEdges} but make more careful note of the location of the edges in $N_2(v)$.  Let $a$ and $b$ represent the adjacent vertices in $N_1(v)$.  Let $u$ be the vertex in $N_2(v)$ whose two predecessors are $N_1(v)\setminus \{a,b\}$.  Each of the four vertices in $N_2(v)\setminus \{u\}$ has exactly one of $a$ or $b$ as a predecessor. Note that there are four shortest paths of length 2 between these vertices and the other vertex in $\{a,b\}$. Since these paths cannot be unique, there must be two independent edges in $N_2(v)$ that connect vertices with no common predecessor in $\{a,b\}$.  Note that if two adjacent vertices in $N_2(v)$ do not share a predecessor in $N_1(v)\setminus \{a,b\}$, then there are unique shortest paths of length 2 between the vertices $N_1(v)\setminus \{a,b\}$ and the neighbors of their successors in $N_2(v)$. So the independent edges between the vertices of $N_2(v)\setminus \{u\}$ connect vertices so that each pair has one common predecessor in $N_1(v)\setminus \{a,b\}$ and has one non-common predecessor in $\{a,b\}$.

Each endpoint of the independent edges in $N_2(v)$ has exactly one successor, and $u$ must have two successors. It follows that there are six edges between the vertices in $N_2(v)$ and the vertices in $N_3(v)$.  Since the two vertices in $S \cap N_3(v)$ each have three predecessors, there can be no other vertices in $N_3(v)$ and $S=N_3(v)$. From Claim \#1, these two vertices must be adjacent to each other. Each of the four vertices in $N_2(v)\setminus \{u\}$ are adjacent to one of the vertices in $S$. The two predecessors other than $u$ of each vertex in $S$ must be a pair of nonadjacent vertices in $N_2(v)\setminus \{u\}$ to avoid unique paths of length 2 between vertices in $N_2(v)\setminus \{u\}$ and their non-neighbor in $N_3(v)$. Additionally, the two successors of $a$ (respectively, $b$) must have a common successor in $S$, otherwise there is a unique shortest path of length 2 between $a$ (respectively, $b$) and a vertex in $S$.  This implies that the predecessors of a vertex in $S$ are $u$ and two nonadjacent vertices in $N_2(v)$ that share a predecessor in $\{a,b\}$.  Thus $G\cong K_3\Box C_4$.\\

\noindent {\em Case 3:}  $G[N_1(v)]$ contains exactly two edges, so $|S| = 2$.

\noindent Then $q(G) > 2$ by Lemma \ref{Max3LowEdges}, Case 3 and the observation at the end of its proof.\\

\noindent {\em Case 4:}  $G[N_1(v)]$ contains exactly three edges, so $|S| = 2$.

If $G[N_1(v)] \cong P_4$ or $G[N_1(v)] \cong C_3 \cup K_1$, then by Case 4 of Lemma \ref{Max3LowEdges} and the observations within its proof, there is a pair of nonadjacent vertices with a unique path of length 2 between them.   So we assume $G[N_1(v)] \cong K_{1,3}$.  As in that proof, $N_2(v)$ consists of three vertices, each being the common successor of two leaves of the $K_{1,3}$. Since $j=3$, there are two adjacent vertices of $S$ in $N_3(v)$, each with three predecessors.  Since $N_2(v)$ is an independent set, each of its  three  vertices has two successors, so there are six edges between $N_2(v)$ and $N_3(v)$. It can now be verified that $G\cong R_{10,3}$. For instance, let $v$ correspond to vertex $1$ in the drawing of $R_{10,3}$ in Figure \ref{RGraphs2}.\\

In all of the above cases, we either reach a contradiction to $q(G)=2$, or $G$ is isomorphic to either $R_{10,3}$ or $K_3\Box C_4$. Lemma \ref{SporadicGraphs} establishes $q(R_{10,3})=2$. Since $K_3\Box C_4\cong K_3\Box K_2\Box K_2$, Corollary 6.8 in \cite{MR3118943} implies $q(K_3\Box C_4)=2$.  This completes the proof.
\end{proof}

\begin{lemma} \label{HardCase}
If $G$ is a connected $4$-regular graph with $q(G)=2$ and diameter at least $3$, then $G$ is either
\begin{enumerate}[(1)]
\item $K_3\Box C_4$, $K_{3,3}\Box K_2$, one of the graphs $R_{10,2}$, $R_{10,3}$, $R_{10,4}$, $R_{12,3}$, or $R_{14,1}$ given in Figure \ref{RGraphs2};
\item $Q_4$; or,
\item a closed candle $H_k$ for some $k \ge 6$.  
\end{enumerate}
\end{lemma}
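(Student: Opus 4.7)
The plan is to fix a vertex $v$ with eccentricity $d = \epsilon(v) \geq 3$ and analyze the distance partition $V(G) = \bigcup_{i=0}^d N_i(v)$. By Lemma \ref{No4pred}, we may assume that at least one vertex in this partition has exactly four predecessors; let $F$ and $T$ denote the sets of $4$- and $3$-predecessor vertices respectively, while every other vertex at distance $\geq 2$ from $v$ has exactly two predecessors. Counting the six extra edges gives the identity
\[
2|F| + |T| + \sum_{i} |E(G[N_i(v)])| = 6,
\]
so in particular $1 \le |F| \le 3$. This identity organizes the entire case analysis.

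The first main branch is the case that every vertex of $F$ lies in $N_2(v)$. Each such $w$ satisfies $N(w) = N_1(v)$, so $v$ and $w$ are a pair of non-adjacent twins. Using Lemma \ref{shortest paths lemma} to rule out unique length-$2$ paths between successors of $w$ and vertices in $N_1(v)$, together with the Claims of Lemma \ref{No4pred} (which remain valid whenever those twins have no successors of their own), one can show that the arrangement of these twin vertices combined with the edges inside $N_1(v)$ and $N_2(v)$ is highly constrained. I would work through the subcases $|F \cap N_2(v)| \in \{1,2,3\}$, in each determining the admissible structure of $N_1(v)$, $N_2(v)$, and $N_3(v)$ and matching the resulting graph to $K_{3,3}\Box K_2$, $R_{10,2}$, $R_{10,3}$, or $R_{10,4}$ from Lemma \ref{SporadicGraphs}.

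The harder branch is when some $w \in F$ lies in $N_i(v)$ with $i \geq 3$. Because $N(w) \subseteq N_{i-1}(v)$, the vertex $w$ is full and has no successor, exactly as in Claim~1 of Lemma \ref{No4pred}; any neighbor $w'$ of $w$ inside $N_i(v)$ must therefore also be full. Applying Lemma \ref{shortest paths lemma} to the potential length-$2$ paths from $w$ through $w'$ forces $w$ and $w'$ to share the same two predecessors, which is precisely a candle section between levels $i-1$ and $i$. I would propagate this rigidity both forward and backward: once two consecutive levels are linked by a candle section, the next candidate level must contain exactly two vertices joined to the previous pair in the same candle-section pattern. Iterating, the distance partition is built out of pairs $\{u_i, v_i\}$ glued by candle sections, and by finiteness and connectedness of $G$ the chain must close up. If closure identifies $N_1(v)$ with the opposite end of the chain, the result is a closed candle $H_k$ with $k = d+1$; since $H_3, H_4, H_5$ all have diameter $2$, the $d \geq 3$ hypothesis gives $k \geq 6$. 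If instead the chain collapses at distance $2$ in the $4$-dimensional-cube pattern, one obtains $Q_4$. The sporadic $R_{12,3}$ and $R_{14,1}$ appear in mixed configurations in which a single candle section is replaced by a pair in $T$ or by an internal edge, and are identified by matching the local structure against Figure \ref{RGraphs2} and invoking Lemma \ref{SporadicGraphs}.

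The principal obstacle is the bookkeeping across the many subcases: we must enumerate the allowed tuples $(|F|, |T|, \sum_i |E(G[N_i(v)])|)$ summing to $6$ and, for each placement of $F$- and $T$-vertices in the distance sets, either produce a unique shortest path of length $2$ (so $q(G) > 2$ by Lemma \ref{shortest paths lemma}) or pin down $G$ as one of the listed graphs. The most delicate step is the closure of the candle-section chain into an $H_k$: one must show that the wrap-around identification is forced by $4$-regularity and connectedness, ruling out any configuration that would leave a pendant segment or otherwise violate the degree condition at the final level.
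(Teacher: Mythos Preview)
Your case split is organized around the wrong dichotomy, and as a result the branches do not land where you claim. The paper's proof is organized by the number of $4$-predecessor vertices in the \emph{farthest} level $N_d(v)$, and all of the sporadic graphs you assign to your ``first branch'' ($K_{3,3}\Box K_2$, $R_{10,2}$, $R_{10,3}$, $R_{10,4}$) actually arise when the $4$-predecessor vertices sit in $N_d(v)$ with $d=3$, not in $N_2(v)$. For instance, in $R_{10,4}\cong C(10,\pm1,\pm3)$ the unique $4$-predecessor vertex is the antipode in $N_3(v)$, and each vertex of $N_2(v)$ has only three predecessors. Conversely, the situation you describe in your first branch---a vertex $b\in N_2(v)$ with $N(b)=N_1(v)$ and $F\cap N_d(v)=\emptyset$---is exactly the paper's Case~4, which produces the \emph{odd} closed candles $H_k$, not the sporadic graphs. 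So both branches are mislabeled.

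Your second branch also contains a mechanical error. If $w\in F$ lies in $N_i(v)$ with $i\geq 3$, then $w$ already has four neighbours in $N_{i-1}(v)$ and hence has \emph{no} neighbour $w'$ in $N_i(v)$; the sentences about ``$w$ and $w'$ sharing the same two predecessors'' are vacuous. The candle-section propagation in the paper runs not from $w$ but from a \emph{predecessor} $y$ of the $4$-predecessor vertices: one shows that the two predecessors $x_1,x_2$ of $y$ must share both of their own predecessors, and iterates downward to $N_1(v)$. Similarly, $Q_4$ does not arise from a collapsed candle chain; it comes from the configuration where a single $4$-predecessor vertex $z\in N_d(v)$ has all four predecessors with three predecessors each (Case~3(a)), forcing $|N_{d-2}(v)|=6$ and $d=4$. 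The graphs $R_{12,3}$ and $R_{14,1}$ likewise arise from specific predecessor-sharing patterns among two or three $4$-predecessor vertices in $N_d(v)$, not from ``replacing a candle section by a pair in $T$.'' Your edge-count identity $2|F|+|T|+\sum_i|E(G[N_i(v)])|=6$ is correct and useful, but the subsequent case analysis needs to be reorganized around what happens in $N_d(v)$ before it can reach the listed outcomes.
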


\begin{proof}

Recall that a full vertex is a vertex whose incident edges have all been accounted for.

Let $v$ be a vertex of $G$ with $\epsilon(v) \geq 3$.  If no vertex in the distance partition of $v$ has four predecessors, then $G \cong R_{10,3}$ or $G \cong K_3 \Box C_4$ by Lemma \ref{No4pred}.  For the remainder of this proof, we assume that the distance partition of $v$ contains at least one vertex with four predecessors.  Let $d = \epsilon(v)$ be the index of the farthest distance set.  Each vertex with four predecessors uses two of the six extra edges, so $N_d(v)$ may contain no more than three vertices with four predecessors.\\

\noindent {\em Case 1:} Exactly three vertices in $N_d(v)$, say $z_1$, $z_2$, and $z_3$, have four predecessors.

\noindent All six of the extra edges are used by $z_1$, $z_2$, and $z_3$.  By Case 1 of Lemma \ref{Max3LowEdges}, $N_2(v)$ contains exactly six vertices, each being the common successor of a different pair of vertices in $N_1(v)$, and $d=3$.  Since $N_2(v)$ has exactly six vertices with two successors each, $N_3(v) = \{z_1, z_2, z_3\}$ and each vertex in $N_2(v)$ has two neighbors in $\{z_1, z_2, z_3\}$.

Let $N_2(v)=\{y_1, y_2, y_3, y_4, y_5, y_6\}$, and recall from Case 1 of Lemma \ref{Max3LowEdges} that each vertex in $N_2(v)$ shares a common predecessor in $N_1(v)$ with exactly four other vertices in $N_2(v)$.  This partitions $N_2(v)$ into three pairs, say $\{y_1, y_2\}$, $\{y_3, y_4\}$, and $\{y_5, y_6\}$, where each pair does not share a predecessor in $N_1(v)$.  Since each $y_i$ has two successors among $\{z_1, z_2, z_3\}$, each of these pairs must share at least one successor in $N_3(v)$.  Suppose $y_1$ and $y_2$ share $z_1$ as a common successor.  To avoid $y_1z_1y_2$ being a unique path of length 2, $y_1$ and $y_2$ must also share their second successor, say $z_2$.  Similarly, $\{y_3, y_4\}$ and $\{y_5, y_6\}$ each have two shared successors among $\{z_1, z_2, z_3\}$.  Without loss of generality, we may assume that the successors of $\{y_3, y_4\}$ are $\{z_1, z_3\}$ and the successors of $\{y_5, y_6\}$ are $\{z_2, z_3\}$. Thus $G \cong R_{14,1}$, and Lemma \ref{SporadicGraphs} completes the proof.
\\

\noindent {\em Case 2:} Exactly two vertices in $N_d(v)$, say $z_1$ and $z_2$, have four predecessors.

\noindent In this case there remain two extra edges. We treat the possibilities for these edges in the following four subcases.\\

\noindent {\em Case 2(a):} $z_1$ and $z_2$ share all their predecessors in $N_{d-1}(v)$.

\noindent Denote the common predecessors of $z_1$ and $z_2$ by $\{y_1, y_2, y_3, y_4\}$.  Suppose $d> 3$.  Let $\{x_1, x_2\} \subseteq N_{d-2}(v)$ denote the predecessors of $y_1$.  If $x_1$ has three predecessors, then $y_1$ is its unique successor, and $x_1y_1z_1$ is a unique shortest path of length 2 between $x_1$ and $z_1$.  So we may let $\{w_1, w_2\} \subseteq N_{d-3}(v)$ denote the predecessors of $x_1$.  Since $y_1$ is full and $w_1x_1y_1$ is a shortest path of length 2, $w_1$ must be adjacent to $x_2$.  Similarly, $w_2$ is adjacent to $x_2$.  We repeat this argument, with $x_1$ replacing $y_1$, and then with $w_1$ replacing $x_1$, forming successive induced candle sections from $\{x_1, x_2\}$ until we reach, say, $\{a_1, a_2\} \subseteq N_1(v)$.  Since both $x_1$ and $x_2$ are just one edge short of being full, there must exist a vertex, say $y_4$, whose predecessors are not $x_1$ or $x_2$.  Denote the predecessors of $y_4$ by $\{x_3, x_4\}$.  By the same argument, starting with $y_4$ we see successive induced candle sections from $\{x_3,x_4\}$ until $\{a_3, a_4\} \subseteq N_1(v)$.  Recall that $\{x_1, x_2\} \cap \{x_3, x_4\} = \emptyset$.  A vertex in $\{w_1, w_2\} \cap \{w_3, w_4\}$ would have to have four successors, so $\{w_1, w_2\} \cap \{w_3, w_4\} = \emptyset$.  Continuing, the two candle sections must be disjoint, including $\{a_1, a_2\} \cap \{a_3, a_4\} = \emptyset$.  It follows that $N_1(v) = \{a_1, a_2, a_3, a_4\}$.

Suppose the two predecessors of $y_2$ are $x_i\in \{x_1, x_2\}$ and $x_j \in \{x_3, x_4\}$.  Then $x_i, x_j$, and $y_2$ are all full, and $x_iy_2x_j$ is a unique shortest path of length 2 between $x_i$ and $x_j$.  So we assume without loss of generality that the predecessors of $y_2$ are $\{x_1, x_2\}$, and of $y_3$ are $\{x_3, x_4\}$.

Each vertex in $N_1(v)$ is one edge short of being full, and any pair of these vertices shares $v$ as a common neighbor.  Suppose $a_1$ is adjacent to one of $\{a_3, a_4\}$, say $a_3$.  Then $a_1$ and $a_3$ are both full, and $a_1va_4$ is a unique shortest path of length 2 between $a_1$ and $a_4$.  So $a_1$, $a_3$, and $a_4$ must share a common successor $u\in N_2(v)$.  Similarly, $a_2$, $a_3$, and $a_4$ must share a common successor, which must be $u$ since $a_3$ and $a_4$ are full.  Therefore, $u$ has four predecessors, using up the remaining two extra edges, and all vertices listed so far are full.  It follows that $G$ cannot contain any additional vertices, and $G \cong H_k$ for some even $k\geq 6$ (as the number of vertices accounted for in the proof is divisible by $4$).  Recall that $q(H_k)=2$ by Lemma \ref{qCandle}.

Suppose $d=3$ and again write $N_1(v) = \{a_1, a_2, a_3, a_4\}$.  The exact number of edges that connect $\{y_1, y_2, y_3, y_4\}$ to vertices in $N_1(v)$ is eight.  Furthermore, since $z_1$ and $z_2$ are full, each vertex in $N_1(v)$ must have either zero or at least two successors in $\{y_1, y_2, y_3, y_4\}$, to prevent a unique shortest path of length 2 between $N_1(v)$ and $N_3(v)$.  If, say $a_1$, has no successor in $\{y_1, y_2, y_3, y_4\}$, then it must be adjacent to each of $\{a_2, a_3, a_4\}$. However, to account for the edges between $N_1(v)$ and $N_2(v)$, some $a_i$ must have three successors. This is impossible as $\deg(a_i)=4$ for all $i$. So each $a_i$ has two successors in $\{y_1,y_2,y_3, y_4\}$, accounting for eight edges.  These eight edges and $\{y_1, y_2, y_3, y_4\}\cup\{a_1, a_2, a_3, a_4\}$ form $2C_4$ or $C_8$.  In the first case, it follows as in the previous paragraph that $G \cong H_6$.

Suppose the eight edges connecting the vertices $\{y_1, y_2, y_3, y_4\}$ to $\{a_1, a_2, a_3, a_4\}$ form a $C_8$.  Then there are two pairs of vertices in $N_1(v)$ that share $v$ as a predecessor but share no successor in $\{y_1, y_2, y_3, y_4\}$.  Without loss of generality, assume these pairs are $\{a_1, a_3\}$ and $\{a_2, a_4\}$. Suppose the two remaining extra edges are contained in $N_1(v)$. Then the two non-isomorphic ways to place these edges are  $\{a_1a_2, a_3a_4\}$ or $\{a_1a_3, a_2a_4\}$. In the first case, $a_1va_3$ is a unique shortest path of length 2. If $a_1a_3$ and $a_2a_4$ are edges, and if the common successor of $a_1$ and $a_4$ is, say $y_1$, then $y_1a_1a_3$ is a unique shortest path of length 2 between $y_1$ and $a_3$. Therefore $N_1(v)$ cannot contain two edges and there is at least one (and at most two) additional vertex (vertices) in $N_2(v)$.  

In either case, a vertex $y_5 \notin \{y_1, y_2, y_3, y_4\}$ cannot have a successor, since there would be a unique shortest path of length 2 between this successor and the predecessors of $y_5$ in $N_1(v)$.  So $y_5$ must be a terminal vertex. Because we have only two extra edges remaining, $y_5$ must have at most one neighbor in $N_2(v)$ and hence must have at least three predecessors.  If there were a second terminal vertex $y_6\in N_2(v)$, then there would be at least six edges from $\{y_5, y_6\}$ to $N_1(v)$, contradicting that all four vertices in $N_1(v)$ are only one edge short of being full. The only possibility then is for $N_2(v)$ to contain the one additional vertex, $y_5$, with all four vertices in $N_1(v)$ as its predecessors, using up the two remaining extra edges.  Then all vertices are full, and in this case $G \cong R_{12,1}$, which satisfies $q(R_{12,1})>2$, by Lemma \ref{SporadicGraphs}, and leads to a contradiction.\\

\noindent {\em Case 2(b):} $z_1$ and $z_2$ share exactly three of their four predecessors in $N_{d-1}(v)$.

\noindent Denote the predecessors of $z_1$ by $\{y_1, y_2, y_3, y_4\}$ and of $z_2$ by $\{y_2, y_3, y_4, y_5\}$.  Then $y_1$ shares one common successor with $y_2$, $y_3$, and $y_4$.  Given that these three vertices have two predecessors each, they are full, as is their other successor $z_2$. So $y_1$ cannot be adjacent to any of them, nor can it have a second common successor with any of them.  Therefore, $y_1$ must have a common predecessor with each of $y_2$, $y_3$, and $y_4$.  In particular, let $x_1$ be a common predecessor of $y_1$ and $y_2$.  Then the path $x_1y_2z_2$ implies $x_1$ must be adjacent to $y_3$, $y_4$, or $y_5$, which implies $x_1\in N_1(v)$ and $d=3$.  Denote the three remaining vertices in $N_1(v)$ by $\{x_2, x_3, x_4\}$.  Moreover, the fullness of $z_1$, $y_2$, $y_3$, and $y_4$ implies that any neighbor of $y_1$ other than $z_1$ must be its predecessor (to avoid a unique shortest path of length 2 between $z_1$ and this neighbor of $y_1$), and hence $y_1$ has three predecessors in $N_1(v)$.  Similarly, $y_5$ has three predecessors in $N_1(v)$, using up the two remaining extra edges.  

We make two observations.  First, since there are exactly twelve edges connecting $\{y_1, y_2, y_3, y_4, y_5\}$ to vertices in $N_1(v)$, any successor of $x_1$, $x_2$, $x_3$, and $x_4$ must be in $\{y_1, y_2, y_3, y_4, y_5\}$, and
\[
V(G) = \{z_1, z_2, y_1, y_2, y_3, y_4, y_5, x_1, x_2, x_3, x_4, v\}.
\]
Second, $y_1$ and $y_5$ share at least two predecessors.

Suppose $y_1$ and $y_5$ share all three predecessors, say $\{x_1, x_2, x_3\}$.  Then the three successors of $x_4$ must be $y_2$, $y_3$, and $y_4$.  Finally, the remaining predecessor of $y_3$ and $y_4$ must be, without loss of generality, $x_2$ and $x_3$, respectively.  Hence all vertices are full, and $G \cong K_{3,3} \Box K_2$.  Corollaries 6.5 and 6.8 of \cite{MR3118943} show that $q(K_{3,3} \Box K_2)=2$.

Now suppose $y_1$ and $y_5$ share two predecessors, say $\{x_1, x_2\}$.  Without loss of generality, let $x_3$ and $x_4$ be the third predecessors of $y_1$ and $y_5$, respectively.  Either $x_1$ and $x_2$ share their third successor as well or they do not.  Suppose $x_1$ and $x_2$ share all three successors, $\{y_1, y_2, y_5\}$.  Then $y_1$, $y_2$, and $y_5$ are all full, and the four remaining edges between $N_2(v)$ and $N_1(v)$ are determined.  In this case, $G \cong R_{12,2}$, contradicting $q(G) = 2$ by Lemma \ref{SporadicGraphs}.  Therefore, we must have that $x_1$ and $x_2$ do not share a third common successor.  Since the third successor of $x_1$ is $y_2$, let the third successor of $x_2$ be $y_3$.  Again, the four remaining edges between $N_2(v)$ and $N_1(v)$ are determined, up to one choice, which yields graphs that are isomorphic.  In this case, it can be verified that $G \cong R_{12, 3} \cong C(12, \pm1, \pm 3)$, and Lemma \ref{SporadicGraphs} implies $q(R_{12, 3})=2$. \\

\noindent {\em Case 2(c):} $z_1$ and $z_2$ share exactly two of their four predecessors in $N_{d-1}(v)$.

\noindent Suppose the predecessors of $z_1$ are $\{y_1, y_2, y_3, y_4\}$ and the predecessors of $z_2$ are $\{y_3, y_4, y_5, y_6\}$.  With two predecessors, $y_3$ is full (as is $y_4$).  To resolve the unique shortest paths of length 2 through $z_1$ or $z_2$, $y_3$ must share a predecessor with each of $y_1, y_2, y_5$, and $y_6$.  It follows that the predecessors of $y_3$ have three successors each, and therefore $d=3$.

Since there are at least twelve edges from $\{y_1, y_2, y_3, y_4, y_5, y_6\}$ to $N_1(v)$, and at most twelve edges from $N_1(v)$ to $N_2(v)$, we must have: $N_2(v) = \{y_1, y_2, y_3, y_4, y_5, y_6\}$, each vertex in $N_2(v)$ has exactly two predecessors in $N_1(v)$, and each vertex in $N_1(v)$ has exactly three successors in $N_2(v)$.  Note that $y_3$ and $y_4$ are full while $y_1$, $y_2$, $y_5$, and $y_6$ are each one edge short of full. We label $N_1(v)=\{x_1, x_2, x_3, x_4\}$. 

To prevent unique shortest paths of length 2 between $x_i$ and $z_j$, each $x_i$ must be adjacent to at least one vertex in $\{y_3, y_4\}$. It follows that each $x_i$ must have exactly one neighbor in $\{y_3, y_4\}$.  Assume one successor of $x_1$ is $y_3$.  To avoid unique paths of length 2 between $x_1$ and both $z_1$ and $z_2$, $x_1$ must have at least one more successor in $\{y_1, y_2\}$ and at least one more successor in $\{y_5, y_6\}$. Assume, without loss of generality, that the successors of $x_1$ are $\{y_1, y_3, y_5\}$.  Suppose $y_1$ has a successor, $z_3$, in addition to $z_1$.  Then $z_3$ must be adjacent to $y_5$ (to prevent $x_1y_1z_3$ being a unique shortest path of length 2 between $x_1$ and $z_3$) and $z_3$ must be adjacent to $y_6$ (to prevent $z_3y_5z_2$ being a unique shortest path of length 2 between $z_3$ and $z_2$).  This leaves all vertices in $N_2(v)$ except $y_2$ full, but any successor of $y_2$ distinct from $z_1$, $z_2$, and $z_3$ would create a unique shortest path of length 2 between this successor and a predecessor of $y_2$. So the successor of $y_2$ must be $z_3$, contradicting the assumption of Case 2.  

To become full, $y_1$ must have exactly one neighbor in $\{y_2, y_5, y_6\}$.  If $y_1$ is adjacent to $y_2$, then $x_1y_1y_2$ is a unique shortest path of length 2 between the full vertices $x_1$ and $y_2$.  But if $y_1$ is adjacent to $y_5$ or $y_6$, then $z_1y_1y_5$ or $z_1y_1y_6$ is a unique shortest path of length 2 between $z_1$ and $y_5$ or $y_6$, respectively, which would contradict $q(G)=2$.\\

\noindent {\em Case 2(d):} $z_1$ and $z_2$ share exactly one, or none, of their four predecessors in $N_{d-1}(v)$.

\noindent Here $N_{d-1}(v)$ must contain seven or eight vertices, at least six of which, after accounting for two predecessors, are exactly one edge short of being full.  If any such $y\in N_{d-1}(v)$ has another successor $z_3 \in N_d(v)$, then $z_3$ must have a neighbor $z_4 \in N_d(v)\setminus \{z_1, z_2\}$. Since $y$, $z_1$, and $z_2$ are full, $yz_3z_4$ is a unique shortest path of length 2 between $y$ and $z_4$. In order for the vertices in $N_{d-1}(v)$ to become full, at least six vertices require incidence with one of the two remaining extra edges, which is impossible.\\

\noindent {\em Case 3:} Exactly one vertex, say $z$, in $N_d(v)$ has four predecessors.

\noindent Four extra edges remain.  Denote the predecessors of $z$ by $\{y_1, y_2, y_3, y_4\} \subseteq N_{d-1}(v)$. \\

\noindent {\em Case 3(a):} $y_1$, $y_2$, $y_3$, and $y_4$ each have three predecessors.

\noindent All remaining extra edges are used, and $N_d(v) = \{z\}$ and $N_{d-1}(v) = \{y_1, y_2, y_3, y_4\}$.  Suppose $d=3$.  Each vertex in $N_1(v)$ has three successors in $N_2(v)$ (to account for the twelve edges connecting $N_2(v)$ to $N_1(v)$), and each $y_i$ has one vertex in $N_1(v)$ as its non-neighbor.  If $y_i\neq y_j$ have the same non-neighbor, then that vertex cannot have three successors.  So the four vertices in $N_2(v)$ are in bijection with the four vertices in $N_1(v)$, via non-neighbors, determining the graph as $G\cong R_{10,4}$.  Lemma \ref{SporadicGraphs} shows $q(R_{10,4})=2$.

Suppose $d>3$.  Each pair of vertices in $\{y_1, y_2, y_3, y_4\}$ shares $z$ as a common successor, and each $y_i$ is full with three predecessors, so each pair must share a common predecessor in $N_{d-2}(v)$.  The vertices in $N_{d-2}(v)$ must have two predecessors each and therefore can have at most two successors in $N_{d-1}(v)$.  This forces $N_{d-2}(v)$ to contain six vertices $\{x_1, x_2, x_3, x_4, x_5, x_6\}$, each being the common predecessor of a different pair of vertices in $\{y_1, y_2, y_3, y_4\}$, exhausting all edges between $\{y_1, y_2, y_3, y_4\}$ and $\{x_1, x_2, x_3, x_4, x_5, x_6\}$.

Consider $y_1$ and, without loss of generality, let $\{x_1, x_2, x_3\}$ denote its three predecessors.  Then $x_4$, $x_5$, and $x_6$ each must have their two successors in $\{y_2, y_3, y_4\}$.  We claim that $x_4$, $x_5$, and $x_6$ must have a common predecessor in $N_{d-3}(v)$.  Note that if $x_4$, $x_5$ and $x_6$ have a common predecessor, this predecessor must lie in $N_1(v)$, and $d=4$. Indeed, if any two of $\{x_4, x_5, x_6\}$ share a predecessor, then all three must share a predecessor, in order to prevent a unique shortest path of length 2 between this predecessor and one of $\{y_2, y_3, y_4\}$.  However, if $x_4$, $x_5$, and $x_6$ have three distinct predecessors in $N_{d-3}(v)$ that are not predecessors of either of the other two, then each of these three predecessors must have two successors in $\{x_1, x_2, x_3\}$, thereby causing $x_1$, $x_2$, and $x_3$ to be full; the fourth vertex of $N_{d-3}(v) = N_1(v)$ must then have $x_4$, $x_5$, and $x_6$ as its three successors.

Let $w_1$ be a shared predecessor of $\{x_4, x_5, x_6\}$.  Then there are two paths of length 2 between $w_1$ and each of $\{y_2, y_3, y_4\}$ and no paths of length 2 between $w_1$ and $y_1$.  Repeat this procedure for $y_2$, $y_3,$ and $y_4$, each time finding a common predecessor $w_i$ of
\[
\{x_1, x_2, x_3, x_4, x_5, x_6\} \setminus \{\mbox{predecessors of } y_i\}
\]
for $i=2, 3, 4$.  The choice of edges between $\{y_1, y_2, y_3, y_4\}$ and $\{x_1, x_2, x_3, x_4, x_5, x_6\}$ therefore determines the edges between $\{x_1, x_2, x_3, x_4, x_5, x_6\}$ and $\{w_1, w_2, w_3, w_4\}$. However, since all pairs of vertices in $\{y_1, y_2, y_3, y_4\}$ share a predecessor among the independent set $\{x_1, x_2, x_3, x_4, x_5, x_6\}$ of vertices, all choices yield isomorphic graphs.  All vertices are now full, so we have listed all elements of each distance set, and $G \cong Q_4$.  By \cite{MR3118943} Corollary 6.9, $q(Q_4) = 2$.\\

\noindent {\em Case 3(b):} There exists a vertex in $N_{d-1}(v)$, say $y_1$, that does not have three predecessors.

\noindent The edges incident with $z$ account for two of the extra edges. If $y_1$ has another successor $z_2\in N_d(v)$, then $z_2$ must have a neighbor $z_3\in N_d(v) \setminus \{z, z_2\}$; since $y_1$ (with two predecessors) and $z$ are both full, $y_1z_2z_3$ is a unique shortest path of length 2 between $y_1$ and $z_3$. So $y_1$ must have a neighbor $u \in N_{d-1}(v)$, using up an extra edge.

Suppose $u\notin \{y_2, y_3, y_4\}$.  Then $u$ must be adjacent to another vertex in $\{y_1, y_2, y_3, y_4\}$, say $y_2$, to avoid a unique shortest path of length 2 between itself and $z$.  This leaves two remaining extra edges.  Let $x_1$ and $x_2$ denote the two predecessors of $u$.  Suppose $x_1$ has two predecessors.  Then it cannot be adjacent to both $y_1$ and $y_2$.  So there must exist $x_3 \in N_{d-2}(v)$ (possibly $x_3=x_2$) such that $x_1$ is adjacent to $x_3$, and $x_3$ is adjacent to $y_1$ and $y_2$.  But then $x_3$ can have only one predecessor, so we must have $d=3$.

Let $N_1(v) = \{x_1, x_2, x_3, x_4\}$. If $x_1$ and $x_2$ are not incident with an edge of $G[N_1(v)]$, then $x_1$ and $x_2$ both must be adjacent to $\{u, y_1, y_2\}$ (to avoid uniqueness of the shortest paths between each $x_1$, $x_2$ and each $y_1$, $y_2$ through $u$); but then $y_1$ and $y_2$ are full, and $y_1zy_3$ is a unique shortest path of length 2 between $y_1$ and $y_3$.  Hence $G[N_1(v)]$ must contain an edge, using a third extra edge.  It follows that there are exactly ten edges connecting $N_1(v)$ to $N_2(v)$.  Then no vertex in $N_2(v)$ can have three predecessors, which implies $G[N_2(v)]$ must contain another edge, the last of the extra edges.  Moreover, this edge must connect $y_3$ and $y_4$. Note this implies $N_1(v)$ cannot be an independent set of vertices.

Suppose $x_2$ is incident with the edge in $G[N_1(v)]$, and $x_1$ is not.  Then $x_1$ must be adjacent to $\{u, y_1, y_2\}$.  And since $x_1$ shares $v$ as a common predecessor with $x_3$ and $x_4$, and since $u$ is full while $y_1$ and $y_2$ are one edge short of being full, we must have, without loss of generality, $y_1$ adjacent to $x_3$ and $y_2$ adjacent to $x_4$.  Since $x_1$ and $y_1$ are full, the uniqueness of the path $x_2uy_1$ can only be avoided if $x_2$ is adjacent to $x_3$.  Similarly, the uniqueness of the path $x_2uy_2$ can only be avoided if $x_2$ is adjacent to $x_4$.  But $G[N_1(v)]$ can only have one edge.

So the edge in $G[N_1(v)]$ must be $x_1x_2$. Then $x_1$ has only one more successor in $N_2(v)$ which must simultaneously resolve $v$ being the unique common neighbor of the pairs $\{x_1, x_3\}$ and $\{x_1, x_4\}$.  This successor must then have three predecessors, which is impossible. Therefore $u \in \{y_2, y_3, y_4\}$.

Without loss of generality, suppose $u=y_2$.  As at the start of this case, $\{y_1, y_2, y_3, y_4\}$ can have no more successors in $N_d(v)$.  And $y_1$ and $y_2$ cannot be incident to another edge in $G[N_{d-1}(v)]$, in order to allow for each to have two predecessors.  To prevent $y_1$ and $y_3$ having $z$ as a unique common neighbor, they must therefore share a common predecessor, say $x_1\in N_{d-2}(v)$.  If $x_1$ has two predecessors, then it is full, and $x_1y_1y_2$ is a unique shortest path of length 2 between $x_1$ and $y_2$.  So $x_1$ must have only one predecessor, which implies $d=3$.

Suppose that $G[N_2(v)]$ contains no edges other than $y_1y_2$; then $y_3$ and $y_4$ must have three predecessors each.  Denote the other vertices in $N_1(v)$ by $x_2$, $x_3$, and $x_4$.  Now, $y_3$ and $y_4$ must share at least two of their predecessors in $N_1(v)$, but $x_1$ cannot be this shared predecessor, since that would leave $x_1y_1y_2$ as the unique shortest path of length 2 between $x_1$ and $y_2$.  So assume the two shared predecessors of $y_3$ and $y_4$ are $\{x_3, x_4\}$.  Neither $x_3$ nor $x_4$ can be adjacent to $y_1$ or $y_2$, as these adjacencies would result in a unique shortest path of length 2 between one of $\{x_3, x_4\}$ and one of $\{y_1, y_2\}$.  With exactly one remaining extra edge, this edge must be $x_3x_4$ to ensure that $x_3$ and $x_4$ each have four neighbors.  It now follows that $x_1$ is adjacent to $y_2$, and hence the three remaining edges are $x_2y_1$, $x_2y_2$, and $x_2y_4$.  After connecting each $x_i$ to $v$, all vertices are full, and $G\cong R_{10,3}$ (to see this let $v$ be vertex $3$ in the drawing of $R_{10,3}$ in Figure \ref{RGraphs2}). Applying Lemma \ref{SporadicGraphs} establishes $q(R_{10,3})=2$.

Finally, suppose $G[N_2(v)]$ contains another edge, which can only be $y_3y_4$, as otherwise $y_1$ or $y_2$ would not have enough predecessors. So each $y_i$ has exactly two predecessors.  Then there are eight edges between $N_2(v)$ and $N_1(v)$, and each vertex in $N_1(v)$ must have exactly two successors.  Each pair $\{y_1, y_3\}$, $\{y_1, y_4\}$, $\{y_2, y_3\}$, and $\{y_2, y_4\}$ must share a single predecessor in $N_1(v)$, causing each $y_i$ to be full.  Denote these unique common predecessors by $x_1$, $x_2$, $x_3$, and $x_4$, respectively.  To simultaneously avoid the unique shortest paths of length 2 from $x_1$ to $y_2$ and $y_4$, $x_1$ must be adjacent to $x_4$.  Similarly, $x_2$ must be adjacent to $x_3$.  This uses up the remaining extra edges.  After connecting each $x_i$ to $v$, all vertices are full, and $G \cong R_{10,2}$. Using Lemma \ref{SporadicGraphs} establishes $q(R_{10,2})=2$.\\

\noindent {\em Case 4:} $N_d(v)$ contains no vertices with four predecessors.

\noindent Since the distance partition from $v$ contains at least one vertex with four predecessors, let $b$ denote a vertex with four predecessors.  This accounts for two of the six extra edges.  By the proofs of Lemmas \ref{Max3LowEdges} and \ref{No4pred}, $N_d(v)$ must contain a pair of vertices with three predecessors each or a cycle of vertices with two predecessors each. Furthermore, each edge in $N_d(v)$ is an extra edge and any vertex in $N_d(v)$ with three predecessors accounts for an extra edge, and so $G[N_d(v)]$ must be isomorphic to one of $K_2$, $P_3$, $C_3$, or $C_4$. Each of these options requires either three or four of the remaining extra edges.  Therefore, there is at most one extra edge unaccounted for.  This implies $b$ is the unique vertex with four predecessors. 

We first claim that $b \in N_2(v)$.  Indeed, suppose $b\in N_i(v)$ for some $2<i<d$, and let $\{a_1, a_2, a_3, a_4\} \subseteq N_{i-1}(v)$ be the four predecessors of $b$.  Then each $a_i$ has at least two predecessors.  With at most one of the extra edges left, we may assume, without loss of generality, that $a_1$ has a successor $b_1\in N_i(v)$.  Now, $b_1$ cannot be a terminal vertex, as that would require more extra edges than are currently unaccounted for, so let $c_1 \in N_{i+1}(v)$ be a successor of $b_1$.  Then $a_1b_1c_1$ is a unique shortest path of length 2.  So $b\in N_2(v)$.

We next claim that there are no vertices with three predecessors in $N_i(v)$ for any $i<d$.  As observed at the end of Claim \#1 in the proof of Lemma \ref{No4pred}, any successor of a vertex with three predecessors must be a vertex with four predecessors.  Since $b\in N_2(v)$, any vertex with three predecessors in $N_i(v)$ for $i\geq 2$ cannot have a successor.  It follows that such a vertex requires an edge in $G[N_i(v)]$, which would require the use of at least three extra edges.  A similar argument shows that any vertex in $N_i(v)$ for $2\leq i < d-1$ cannot have a unique successor in $N_{i+1}(v)$. If there were a vertex $x$ with two predecessors and a unique successor $y$, it must be adjacent to another vertex in $N_i(v)$. Then $y$ must either be adjacent to a vertex in $N_{i+2}(v)$ or $N_{i+1}(v)$ yielding either a unique shortest path or too many extra edges. So, with the exception of $b$, every vertex in $N_i(v)$ for $2\leq i < d$ has exactly two predecessors, and every vertex in $N_i(v)$ for $2\leq i < d-1$ has exactly two successors.

We have $\{a_1, a_2, a_3, a_4\} = N_1(v)$, and $b$ is a common successor of these four vertices.  With only one remaining extra edge, each $a_i$ must have another successor in $N_2(v)$.  Let $b_1 \neq b$ be a successor of $a_1$ in $N_2(v)$, let $a_2$ be the second predecessor of $b_1$, and let $c_1 \in N_3(v)$ be a successor of $b_1$. Note that this implies $a_1$ and $a_2$ are not adjacent. To avoid unique paths of length 2 between $c_1$ and each of $a_1$ and $a_2$, the second predecessor of $c_1$, say $b_2$, must have $a_1$ and $a_2$ as its predecessors.  Continuing in this way through $N_i(v)$ for all $3\leq i\leq d-2$, we see successive induced candle sections starting with $\{a_1, a_2\}$, and ending say at $\{y_1, y_2\}\subseteq N_{d-1}(v)$. Let $z_1$ be the common successor of $\{y_1, y_2\}$ in $N_d(v)$. This argument is similar to the argument we used in Case 2(a). Starting again from $a_3$, we see successive induced candle sections starting with $\{a_3, a_4\}$ and ending at say $\{y_3, y_4\}\subseteq N_{d-1}(v)$ whose common successor is $z_2\in N_d(v)$.

Note that all vertices in $N_i(v)$ for $i<d-1$ are full, which implies $N_{d-1}(v) = \{y_1, y_2, y_3, y_4\}$. Each $N_i(v)$ is independent for $i<d-1$, and we claim that $N_{d-1}(v)$ is independent as well. To see this, $y_iy_j$ an edge for $i\in\{1,2\}$ and $j\in\{3,4\}$ would create a unique shortest path of length 2 from $N_{d-1}(v)$ to $N_{d-2}(v)$. If $y_1y_2$ is an edge, then $y_1$ and $y_2$ are full and hence $y_1z_1z$ is a unique shortest path for any neighbor $z$ of $z_1$ in $N_d(v)$. The same observation shows $y_3y_4$ cannot be an edge. Hence the last remaining extra edge must be in $G[N_d(v)]$. This implies that $G[N_d(v)]$ is either isomorphic to $P_3$ or $C_4$. Note that if some vertex in $N_d(v)$ has three predecessors, then without loss of generality, it is adjacent to $y_1$, $y_2$, and $y_3$. This results in a unique shortest path of length 2 through $y_3$. Thus $G[N_d(v)]\cong C_4$.

We now claim that any successor of $y_1$ or $y_2$ must be a successor of both. Indeed, if $z$ is a successor of $y_1$, then the paths $zy_1x_1$ and $zy_1x_2$ force $z$ to be adjacent to $y_2$ as well. Similarly, any successor of $y_3$ or $y_4$ must be a successor of both. Hence we have four vertices in $N_d(v)$ forming a cycle $C_4$ where each vertex is adjacent to either $\{y_1, y_2\}$ or $\{y_3, y_4\}$. This implies that the vertices around the cycle must alternate between being adjacent to $\{y_1, y_2\}$ and $\{y_3, y_4\}$ and so $G \cong H_k$ for some odd $k\geq 7$ (as the size of $V(G)\setminus\{b,v\}$ is divisible by 4). Applying Lemma \ref{qCandle} establishes $q(H_k)=2$.

%Note that all vertices in $N_i(v)$ for $i<d-1$ are full, which implies $N_{d-1}(v) = \{y_1, y_2, y_3, y_4\}$.  Now, $z_1$ must have a neighbor $z\in N_d(v)$.  Suppose $z$ is a successor of $y_1$ or $y_2$.  Then it is a successor of both:  indeed, if $z$ is a successor of $y_2$, then the paths $zy_2x_1$ and $zy_2x_2$ force $z$ to be adjacent to $y_1$ as well.  This would make both $y_1$ and $y_2$ full, and the paths of length 2 between $y_1$ and $y_2$ and the fourth neighbor, $z'$, of $z$ (in either $N_d(v)$ or $N_{d-1}(v)$) would force $z'$ to be adjacent to $z_1$.  Since any successor of $y_3$ or $y_4$ must be a successor of both (by the same argument as above), $z' \notin \{y_3, y_4\}$.  So $z' \in N_d(v)$, and $z_1zz'$ is a 3-cycle in $N_d(v)$.  But then $z'$ must have $\{y_3, y_4\}$ as its two predecessors, resulting in unique paths of length 2 between $y_3$ (and $y_4$) and $z$ (and $z_1$).  So any neighbor of $z_1$ cannot have $\{y_1, y_2\}$ as its predecessors.  Similarly, any neighbor of $z_2$ cannot have $\{y_3, y_4\}$ as its predecessors.  So each vertex of $N_d(v)$ must therefore have exactly two predecessors, and the alternating forces $N_d(v)$ to contain an even number of vertices.  Using the last remaining extra edge, this implies $G[N_d(v)] \cong C_4 = z_1z_2z_3z_4z_1$, with the predecessors of $z_1$ and $z_3$ being $\{y_1, y_2\}$ and the predecessors of $z_2$ and $z_4$ being $\{y_3, y_4\}$.  Thus, $G \cong H_k$ for some odd $k\geq 7$ (as the size of $V(G)\setminus\{b,v\}$ is divisible by 4). Lemma \ref{qCandle} completes the proof.
\end{proof}

This completes the proof of Theorem \ref{Regular4}.

%%%%%%%%%%%%%%%%%%%%%%%%%%%%%%%%%%%%%%%%%%%%%%%%%%%%%%%%%%%%%
\section{Further Observations and Related Problems}
%%%%%%%%%%%%%%%%%%%%%%%%%%%%%%%%%%%%%%%%%%%%%%%%%%%%%%%%%%%%%

Corollary \ref{Regular123} and Theorem \ref{Regular4} give a complete characterization of the $r$-regular graphs that admit two distinct eigenvalues for $r\leq 4$. For $r=4$, the closed candles $H_k$ for $k\geq 3$ give the only infinite family of $4$-regular graphs $G$ with $q(G)=2$. From these graphs we can construct an infinite family of $r$-regular graphs $G$ with $q(G)=2$ for all $r\geq 5$. 

From Corollary 6.8 in \cite{MR3118943}, if $G$ is any graph with $q(G)=2$, then $q(G\Box K_2)=2$. Define $H_k^d=H_k\Box Q_d$ for any $d\geq 0$ and $k\geq 3$. Now from Lemma \ref{qCandle} and \cite[Cor. 6.9]{MR3118943} we immediately have the following result.
\begin{proposition}\label{RegFamily}
For all $k\geq 3$ and $d\geq 0$, $H_k^d$ is a $(4+d)$-regular graph with $q(H_k^d)=2$.
\end{proposition}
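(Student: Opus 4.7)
The plan is to verify the two assertions of the proposition separately: first the regularity of $H_k^d$, which is a purely structural fact, and then the $q$-value, which follows by an easy induction on $d$.

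For regularity, I would invoke the standard fact that the Cartesian product of an $r$-regular graph with an $s$-regular graph is $(r+s)$-regular: a vertex $(g,h) \in V(G \Box H)$ has exactly $\deg_G(g) + \deg_H(h)$ neighbors in $G \Box H$ by the definition of the Cartesian product given in Section~\ref{Preliminaries}. Since $H_k$ was constructed to be $4$-regular and the hypercube $Q_d$ is $d$-regular, this immediately gives that $H_k^d = H_k \Box Q_d$ is $(4+d)$-regular.

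For the $q$-value, I would induct on $d$. The base case $d=0$ is exactly Lemma \ref{qCandle}, since $Q_0 \cong K_1$ and $H_k \Box K_1 \cong H_k$, so $q(H_k^0) = q(H_k) = 2$. For the inductive step, assume $q(H_k^d) = 2$. Using the recursive description $Q_{d+1} \cong Q_d \Box K_2$ together with the associativity of the Cartesian product, we have
\[
H_k^{d+1} \;=\; H_k \Box Q_{d+1} \;\cong\; (H_k \Box Q_d)\Box K_2 \;\cong\; H_k^d \Box K_2.
\]
Corollary 6.8 of \cite{MR3118943}, which was recalled in the paragraph just before the proposition, states that $q(G \Box K_2) = 2$ whenever $q(G) = 2$. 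Applying this with $G = H_k^d$ yields $q(H_k^{d+1}) = 2$, closing the induction.

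There is no real obstacle here; the proposition is a straightforward consequence of Lemma \ref{qCandle} together with the $\Box K_2$ doubling result from \cite{MR3118943}. The only modest point worth flagging explicitly is the associativity of the Cartesian product and the identity $Q_{d+1} \cong Q_d \Box K_2$, which is what lets the inductive peeling of one $K_2$ factor at a time actually reach all of $Q_d$.
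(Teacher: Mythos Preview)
Your proof is correct and follows essentially the same route as the paper, which simply states that the result is immediate from Lemma~\ref{qCandle} together with the Cartesian-product results of \cite{MR3118943}. Your inductive unpacking via $Q_{d+1}\cong Q_d\Box K_2$ and Corollary~6.8 of \cite{MR3118943} is exactly the argument the paper's one-line justification is abbreviating (the paper also cites Corollary~6.9 of \cite{MR3118943}, which handles $Q_d$ directly, but this is a cosmetic difference).
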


When $r\geq 5$, the number of edges in an $r$-regular graph grows faster than the lower bound $|E(G)|\geq 2n-4$. So, characterizing $r$-regular graphs with two distinct eigenvalues for $r\geq 5$ could be difficult using the methods in this paper. This motivates questions about existence and structure of such graphs. That is, are there other infinite families of $r$-regular graphs $G$ with $q(G)=2$, and if so, can they be constructed with a method other than the Cartesian product used in Proposition \ref{RegFamily}?
%So we expect extending the characterization of $r$-regular graphs that admit two distinct eigenvalues beyond $r=4$ to be difficult.
%The graphs $H_k^d$ give an infinite family of $(4+d)$-regular graphs that admit two distinct eigenvalues. 
%Since we believe the characterization will be more difficult for $r\geq 5$, 
%It would be interesting to have examples of infinite families of $r$-regular graphs $G$ with $q(G)=2$ that can be constructed differently than the Cartesian product construction used in Proposition \ref{RegFamily}.
%
\begin{problem}\label{RegularProblem}
Determine whether or not an infinite family of $5$-regular graphs $G$ with $q(G)=2$ disjoint from the family $\{H_k^1\,:\,k\geq 3\}$ exists.
\end{problem}

This paper builds on the work in \cite{AllowsProblem}, where we established a lower bound on the number of edges a graph on $n$ vertices must have in order to have $q(G)=2$, and characterized the graphs that meet the bound with equality. The graphs that meet the bound $|E(G)|\geq 2n-4$ are $Q_3$ and an infinite family of graphs called \emph{double-ended candles}. The lower bound is improved to $|E(G)|\geq 2n-3$ if $G$ has an odd number of vertices, and the graphs that meet this improved bound are exactly the infinite family of graphs called \emph{single-ended candles}. These graphs, along with the closed candles $H_k$ and some of the sporadic graphs in Theorem \ref{Regular4}, appear in a set of papers focusing on a different problem.

McKee and Smyth \cite{MR2360149}, Taylor \cite{MR2774674}, and Greaves \cite{MR2954487, MR2990027} all consider the following problem: for which graphs $G$ is there a matrix \emph{compatible} with $G$ whose eigenvalues are contained in the interval $[-2,2]$? For this problem, ``compatible'' means that the matrix $M$ has exactly the same zero pattern as $A(G)$ off of the diagonal entries, and the entries of $M$ are restricted to lie in the ring of integers for some quadratic field (McKee and Smyth consider matrices with integer entries. Taylor and Greaves consider matrices whose entries lie in the ring of integers for an imaginary quadratic extension of $\mathbb{Q}$). It follows from interlacing that if $G$ has a compatible matrix whose eigenvalues are contained in $[-2,2]$, then so do all of its induced subgraphs. This motivates the characterization of the maximal graphs $G$ for which there is a matrix compatible with $G$ whose eigenvalues are contained in $[-2,2]$. Each of the papers \cite{MR2360149},  \cite{MR2954487}, \cite{MR2990027}, and \cite{MR2774674} gives a characterization for the respective rings under consideration. The graphs that appear in these characterizations are a collection of small sporadic graphs, and a few infinite families: the single-ended candles, the double-ended candles, the closed candles, and the graphs obtained from a candle section by adding an edge between each pair of nearest degree $2$ vertices.

We are currently unable to explain the overlap between the graphs in our characterizations of graphs with $2n-4$, $2n-3$, or $2n$ edges that admit two distinct eigenvalues, and the graphs in the characterizations in \cite{MR2360149},  \cite{MR2954487}, \cite{MR2990027}, and \cite{MR2774674}. It seems reasonable to expect that the maximal graphs whose compatible matrices have all of their eigenvalues contained in $[-2,2]$ should have all of their eigenvalues contained in $\{-2,2\}$ and hence have $q$-value $2$. But we do not have a proof of this claim. A complete explanation of this coincidence may help the investigation of $r$-regular graphs $G$ with $q(G)=2$ for $r\geq 5$.
\begin{problem}\label{MaximalProblem}
For a given integer $l\geq 3$, determine whether the maximal graphs that admit an integer (or similarly constrained) matrix whose eigenvalues are contained in $[-l,l]$ have $q(G)=2$.
\end{problem}

A regular graph $G$ that is neither complete nor empty is \emph{strongly regular} if there exist constants $a$ and $c$ so that any two adjacent vertices in $G$ have $a$ common neighbors, and any two non-adjacent vertices have $c$ common neighbors.  The definition implies that the diameter of a connected strongly regular graph is at most $2$.  If $G$ is a connected regular graph, then $A(G)$ has three distinct eigenvalues if and only if $G$ is a strongly regular graph (see, e.g., Lemma 10.2.1 in \cite{MR1829620}). %\textcolor{blue}{Alternatively, we could also give a reference for both the forward and  reverse implication, for example Theorem 3.6.4 in An Introduction to the Theory of Graph Spectra by Cvetkovi\'{c}, Rowlinson, and Simi\'{c}}.  
So $q(G)\in\{2,3\}$ for a strongly regular graph $G$. In \cite{MR4214542}, Furst and Grotts show that $L(K_n)$, the line graph of $K_n$, has $q(L(K_n))=2$ for all $n\geq 3$ (note that $L(K_n)$ is a strongly regular graph for all $n\geq 3$).

%A regular graph $G$ is \emph{strongly regular} if there exist constants $a$ and $c$ so that any two adjacent vertices in $G$ have $a$ common neighbors, and any two non-adjacent vertices have $c$ common neighbors. A strongly regular graph $G$ is \emph{primitive} if $G$ and its complement are both connected (equivalently, neither $G$ nor its complement is isomorphic to a disjoint union of cliques). If $G$ is a connected regular graph, then $A(G)$ has three distinct eigenvalues if and only if $G$ is a primitive strongly regular graph (see, e.g., Lemma 10.2.1 in \cite{MR1829620}). So $q(G)\in\{2,3\}$ for a primitive strongly regular graph $G$. In \cite{MR4214542}, Furst and Grotts show that $L(K_n)$, the line graph of $K_n$, has $q(L(K_n))=2$ for all $n\geq 3$ (note that $L(K_n)$ is a primitive strongly regular graph for all $n\geq 3$).

A connected graph $G$ is \emph{distance regular} if there are integers $b_i$ and $c_i$ for all $i\geq 0$ so that for any two vertices $u$ and $v$ at distance $i$ in $G$,
\[
|N_1(v)\cap N_{i+1}(u)|=b_i\quad\text{and}\quad|N_1(v)\cap N_{i-1}(u)|=c_i.
\]
(See \cite{BCN} for an extensive treatment of distance-regular graphs.) Distance-regular graphs with diameter $2$ are strongly regular. Since Corollary \ref{Regular123} and Theorem \ref{Regular4} give a complete characterization of the $r$-regular graphs that admit two distinct eigenvalues for $r\leq 4$, they also determine the distance-regular graphs of valency at most $4$ that admit two distinct eigenvalues. Excluding the complete graphs and complete multipartite graphs in the list, these are $Q_3$, $R_{10,4}$, $R_{14,1}$, and $Q_4$. Note that these graphs all have diameter at least $3$, and thus are not strongly regular. The closed candles $H_3$ and $H_4$ are complete multipartite graphs, and all $H_k$ for $k\geq 5$ are not distance regular.
\begin{problem}\label{SRGProblem}
Determine the distance-regular graphs $G$ with $q(G)=2$.
\end{problem}

We finish with two observations about the $4$-regular graphs $G$ with $q(G)=2$, and problems they raise. With the exception of $K_5$ and $R_{7,1}$, all of the $4$-regular graphs that appear in Theorem \ref{Regular4} have even order. In particular, if $|V(G)|\geq 8$, then $G$ must have even order.
\begin{problem}\label{EvenProblem}
Determine whether for every integer $k$, there is some value $n_k$ so that every $k$-regular graph $G$ with $|V(G)|\geq n_k$ and $q(G)=2$ has even order.
\end{problem}

The graphs $H_k$ are all circulant graphs, as are some of the sporadic graphs listed in Theorem \ref{Regular4}. More broadly, many of the graphs in our characterization are vertex transitive. It would be interesting to investigate the highly symmetric graphs that admit a matrix with two distinct eigenvalues. 
\begin{problem}\label{CirculantProblem}
Determine the circulant graphs $G$ that have $q(G)=2$. What can be said about the automorphism groups of regular graphs with $q(G)=2$? 
\end{problem}

%Unused problems from Wayne's list:
%Q: Although the graph $R_{14,1}$ looks fairly complicated, there is a $(0, \pm 1)$ matrix with $2$ distinct eigenvalues that verifies that $q(R_{14,1})=2$.  Apparently, it is considerably more trouble to find a matrix that confirms that $q(R_{12,3}) = 2$.   Can this be explained somehow by the structure of these graphs?  
%A: This seems like it is at least partially answered by McKee and Smyth.
%Q: A similar contrast exists between graphs $R_{14,1}$ and $R_{7,1}$.  We were unable to find a matrix in $\mathcal{S}(R_{7,1})$ with only two distinct eigenvalues, but rather had to consider a submatrix of $R_{7,1}$, find an appropriate matrix $M$ corresponding to it, and then employ the strong spectral property to conclude that $q(R_{7,1})=2$.  As already noted, the needed matrix for $R_{14,1}$ with twice as many vertices can be found easily.  What is the reason?
%A: I decided to omit this problem, mostly for space.
%Q: Under what circumstances can a $(0, \pm 1)$ matrix be found to verify that $q(G)=2$ for a regular or non-regular graph $G$?
%A: This is an interesting question, but again I think it is at least partially addressed by McKee and Smyth.
%Q: Can one learn anything about the graphs that appear in Theorem 2.5 from their automorphism groups?
%A: I tried to combine this with the question about circulants.

%%%%%%%%%%%%%%%%%%%%%%%%%%%%%%%%%%%%%%%%%%%%%%%%%%%%%%%%%%
\section*{Acknowledgements}
%%%%%%%%%%%%%%%%%%%%%%%%%%%%%%%%%%%%%%%%%%%%%%%%%%%%%%%%%%
Shaun M. Fallat was supported in part by an NSERC Discovery Research Grant, Application No.: RGPIN--2019--03934.  Veronika Furst was supported in part by a Fort Lewis College Faculty Development Grant.  Shahla Nasserasr was supported in part by a Rochester Institute of Technology COS Dean’s Research Initiation Grant. Brendan Rooney was supported in part by an RIT COS Dean’s Research Initiation Grant. Michael Tait was supported in part by NSF grant DMS-2011553 and a Villanova University Summer Grant.

This project began as part of the ``Inverse Eigenvalue Problems for Graphs and Zero Forcing” Research Community sponsored by the American Institute of Mathematics (AIM). We thank AIM for their support, and we thank the organizers and participants for contributing to this stimulating research experience.

We thank Tracy Hall for the matrix associated with the graph $R_{7,1}$ and Bryan Shader for the matrices used in the proof of Lemma \ref{qCandle}.

\bibliographystyle{plain}
\bibliography{bibliography}

\end{document}